\DeclareMathAlphabet{\mathpzc}{OT1}{pzc}{m}{it}
\definecolor{verde}{rgb}{0.,0.7,0.}
\definecolor{indigo}{rgb}{.18, .34, .78}
\definecolor{indigo1}{rgb}{.18, .24, .78}
\definecolor{indigo2}{rgb}{.18, .14, .78}
\definecolor{indigo3}{rgb}{.18, 0., .78}
\definecolor{rojo}{rgb}{1,0,0}
\definecolor{negro}{rgb}{0,0,0}
\definecolor{grey}{rgb}{0.5,0.5,0.5}
\definecolor{lila}{rgb}{.46, .16, .78}
\definecolor{lila1}{rgb}{.46, .16, .86}
\definecolor{lila2}{rgb}{.56, .16, .86}
	\definecolor{lila3}{rgb}{.63, .16, .78}
\definecolor{lila4}{rgb}{.7, .16, .78}
\definecolor{lila5}{rgb}{.78, .26, .78}
\definecolor{lila6}{rgb}{.6, 0., .78}
\theoremstyle{plain}
\newtheorem{thm}{Theorem}[section]
\newtheorem{lma}[thm]{Lemma}
\newtheorem{cor}[thm]{Corollary}
\newtheorem{defn}[thm]{Definition}
\newtheorem{prop}[thm]{Proposition}
\newtheorem{rem}[thm]{Remark}
\newtheorem{ex}[thm]{Example}
\newcommand{\qed}{\hfill\quad\fbox{\rule[0mm]{0,0cm}{0,0mm}}  \par\bigskip}
\newcommand{\w}{\hspace{-0,06cm}}
\newcommand{\s}{\hspace{0,06cm}}
\newcommand{\Cat}{\operatorname {Cat}}
\newcommand{\Mnd}{{\rm Mnd}}
\newcommand{\Comnd}{{\rm Comnd}}
\newcommand{\Bimnd}{{\rm Bimnd}}
\newcommand{\BQ}{{\rm BQ}}
\def\dul#1{\underline{\underline{#1}}}
\def\dcr#1{\crta{\crta{#1}}}
\newcommand{\comp}{\circ}
\newcommand{\iso}{\cong}
\newcommand{\ot}{\otimes}
\newcommand{\C}{{\mathcal C}}
\newcommand{\Tau}{{\mathcal T}}
\newcommand{\I}{{\mathcal I}}
\newcommand{\D}{{\mathcal D}}
\newcommand{\F}{{\mathcal F}}
\newcommand{\G}{{\mathcal G}}
\newcommand{\A}{{\mathcal A}}
\newcommand{\B}{{\mathcal B}}
\newcommand{\E}{{\mathcal E}}
\newcommand{\Ll}{{\mathcal L}}
\newcommand{\Pp}{{\mathcal P}}
\newcommand{\U}{{\mathcal U}}
\newcommand{\YD}{{\mathcal YD}}
\newcommand{\crta}{\overline}
\newcommand{\Fi}{\varphi}
\newcommand{\Id}{\operatorname {Id}}
\newcommand{\id}{\operatorname {id}}
\newcommand{\Epsilon}{\varepsilon}
\newcommand{\End}{\operatorname {End}}
\newcommand{\Aut}{\operatorname {Aut}}
\def\K{{\mathcal K}}  
\def\u#1{\underline{#1}}
\newcommand{\cref}[1]{C.~\ref{c:#1}}
\newcommand{\exlabel}[1]{\label{ex:#1}}
\newcommand{\exref}[1]{Example~\ref{ex:#1}}
\newcommand{\lelabel}[1]{\label{le:#1}}
\newcommand{\leref}[1]{Lemma~\ref{le:#1}}
\newcommand{\eqlabel}[1]{\label{eq:#1}}
\newcommand{\equref}[1]{(\ref{eq:#1})}
\newcommand{\delabel}[1]{\label{de:#1}}
\newcommand{\deref}[1]{Definition~\ref{de:#1}}
\newcommand{\prlabel}[1]{\label{pr:#1}}
\newcommand{\prref}[1]{Proposition~\ref{pr:#1}}
\newcommand{\selabel}[1]{\label{se:#1}}
\newcommand{\seref}[1]{Section~\ref{se:#1}}
\begin{document}

\title{ Turaev bicategories, \\ 
generalized Yetter-Drinfel`d \\ modules in 2-categories and \\ 
a Turaev 2-category for bimonads \\ in 2-categories} 
\author{Bojana Femi\'c \vspace{6pt} \\
{\small Facultad de Ingenier\'ia, \vspace{-2pt}}\\
{\small  Universidad de la Rep\'ublica} \vspace{-2pt}\\
{\small  Julio Herrera y Reissig 565,} \vspace{-2pt}\\
{\small  11 300 Montevideo, Uruguay}}

\date{}

\maketitle

\begin{abstract}
We introduce Turaev bicategories and Turaev pseudofunctors. On the one hand, they generalize the notions of Turaev categories (and Turaev functors), 
introduced at the turn of the millennium and originally called ``crossed group categories'' by Turaev himself, and the notions of bicategories 
and pseudofunctors, on the other. For bimonads in 2-categories, which we defined in one of our previous papers, we introduce generalized Yetter-Drinfel`d 
modules in 2-categories. These generalize to the 2-categorical setting the generalized Yetter-Drinfel`d modules (over a field) of Panaite and Staic, 
and thus also in particular the anti Yetter-Drinfel`d modules, introduced by Hajac-Khalkhali-Rangipour-Sommerhauser as coefficients for the cyclic 
cohomology of Hopf algebras, defined by Connes and Moscovici. We construct Turaev 2-category for bimonads in 2-categories as a Turaev extension 
of the 2-category of bimonads. This Turaev 2-category generalizes the Turaev category of generalized Yetter-Drinfel`d modules of Panaite and Staic. 
We also prove in the 2-categorical setting their results on pairs in involution, which in turn go back to modular pairs in involution of 
Connes and Moscovici.

\bigbreak
{\em Mathematics Subject Classification (2010): 16T10, 16T25, 18D05, 18D10.}

\medskip

{\em Keywords: bicategories, 2-(co)monads, 2-bimonads, Yetter-Drinfel`d modules}
\end{abstract}

\section{Introduction}

In \cite{Tur99,Tur00} Turaev introduced 2- and respectively 3-dimensional homotopy quantum field theory (HQFT). It is a version of a topological quantum field theory (TQFT) 
for manifolds $M$ equipped with homotopy classes of maps $M\to K(G,1)$. Here $K(G,1)$ is the Eilenberg-MacLane space determined by a group $G$ and the latter homotopy classes 
of maps classify principal $G$-bundles over $M$. For the purposes of the 3-dimensional case Turaev introduced the notion of {\em crossed group categories} and 
{\em modular crossed group categories} which yield a 3-dimensional HQFT with target $K(G,1)$. When the group $G$ is trivial, one recovers the usual construction of 
3-dimensional TQFT from modular categories. 

Since their introduction crossed group categories have been studied in different algebraic contexts in the works of M. Zunino, M. Lombaerde and S. Caenepeel, F. Panaite and M.D. Staic, 
S. Wang, to mention some of them. 
In Turaev's definition a crossed group category is a $k$-additive rigid monoidal category which is a disjoint union of categories indexed by a group $G$ and satisfies 
certain conditions. Zunino studied {\em Turaev categories} in \cite{Zun} generalizing Turaev's definition to categories which are just monoidal, skipping the additivity 
and rigidity condition. When the group $G$ is trivial, one recovers the definition of a monoidal category. Working in this setting, in \cite{PS} the authors 
introduced {\em generalized Yetter-Drinfel`d modules} over a Hopf algebra $H$ and showed that they form a braided Turaev category. The Yetter-Drinfel`d condition in a 
generalized Yetter-Drinfel`d module is twisted by two elements of the group, say $(\alpha,\beta)$ with $\alpha, \beta\in G$. If $\alpha$ and $\beta$ are trivial, one recovers 
the usual Yetter-Drinfel`d condition. When $\alpha=S^2$ and $\beta=\id_H$, being $S$ the antipode of $H$, one recovers the anti-Yetter-Drinfel`d modules introduced in \cite{H1,H2} as coefficients for the cyclic cohomology of Hopf algebras, defined by Connes and Moscovici in \cite{CM}.  
An $(\id_H,\beta)$-Yetter-Drinfel`d module appeared in \cite{CVZ}, where the authors constructed a group anti-homomorphism $\Aut(H)\to\BQ(k;H)$ from the Hopf automorphism group of a 
finite-dimensional Hopf algebra to the quantum Brauer group of $H$. We generalized this group anti-homomorphism to braided monoidal categories in \cite{F2}. 

\smallskip

In \cite{Femic5} we introduced bimonads in 2-categories and the 2-category $\Bimnd(\K)$ of bimonads in a 2-category $\K$. 
We showed that given a braided monoidal category $\C$, a bimonad in the induced 2-category $\dul{\C}$ is a bialgebra in $\C$ and that 1-cells over the same 0-cell 
$F$ in $\Bimnd(\K)$ are Yetter-Drinfel`d modules over the bialgebra $F$ in $\C$. Moreover, the composition of 1-cells coincides with the 
tensor product in the monoidal category of Yetter-Drinfel`d modules over a bialgebra $F$ in $\C$. This lead us to define Yetter-Drinfel`d modules in 2-categories. 
More precisely, in \cite{Femic7} we call the 1-cells in $\Bimnd(\K)$ {\em strong Yetter-Drinfel`d modules in $\K$} which imply the usual Yetter-Drinfel`d condition. 

\medskip

In the current work, inspired by \cite{PS} we generalize the construction from therein to the 2-categorical setting, obtaining ``generalized 
strong Yetter-Drinfel`d modules in $\K$''. We introduce {\em Turaev bicategories} and {\em Turaev pseudofunctors}. 
Turaev bicategory generalizes the notion of a Turaev category, on one hand, and the notion of a bicategory, on the other. 
Indeed, a Turaev bicategory $\Ll$ consists of a class of 0-cells $\Ll^0$ so that for every two 0-cells $\A,\B\in\Ll^0$ there is a group 
$G_{\A,\B}$ so that the category $\Ll(\A,\B)=\displaystyle{\bigcup_{\alpha\in G_{\A,\B}}^{\bullet} } \Ll(\A, \B)_{\alpha}$ 
is a disjoint union of categories $\Ll(\A, \B)_{\alpha}$. The definition is such that for every $\A$ the category $\Ll(\A,\A)$ is a Turaev category. 
On the other hand, if for all $\A,\B\in\Ll^0$ the group $G_{\A,\B}$ is trivial, then $\Ll$ is a bicategory. Analogously, Turaev pseudofunctors 
generalize both Turaev functors and pseudofunctors. 

Under certain conditions Turaev bicategories have an underlying bicategory, in which case the Turaev bicategory is said to be a Turaev extension 
of the latter. 
We construct a Turaev 2-category for bimonads in $\K$, we denote it by $\Bimnd^T(\K)$. Indeed, it is a {\em Turaev extension} of the 2-category 
$\Bimnd(\K)$ of bimonads that we studied in \cite{Femic7}. Its 1-cells over the same 0-cell are generalized Yetter-Drinfel`d modules in $\K$. 
This Turaev 2-category generalizes the Turaev category of generalized Yetter-Drinfel`d modules from \cite{PS}:  
if $\Ll$ is the 2-category $\dul{Vec}$ induced by the braided monoidal category of vector spaces - where the unique 0-cell is a singleton $*$ - then 
the category $\dul{Vec}(*,*)$ is the Turaev category of Panaite and Staic. 

In our Turaev 2-category for bimonads in $\K$ for every  0-cell $\A$ there is a group $G(\A)$ and for two 0-cells $\A$ and $\B$ the 
group $G_{\A,\B}$ is given by the Cartesian product $G(\B)\times G(\A)$. We introduce {\em pairs in involution} corresponding to pairs 
$(\alpha, \beta)\in G(\B)\times G(\A)$, when the groups $G(\B)$ and $G(\A)$ are isomorphic. We prove that in this case the categories 
$\Bimnd^T(\K)(\A,\B)_{(\alpha, \beta)}$ and $\Bimnd(\K)(\A,\B)$ are isomorphic. This generalizes the corresponding result in \cite{PS}. 
Our pairs in involution are a 2-categorification of {\em modular pairs in involution} introduced by Connes and Moscovici. 

\medskip

The composition of the paper is the following. In the next Section we recall the definition of Turaev category and our definition of the 2-category 
$\Bimnd(\K)$ of bimonads in a 2-category $\K$. 
In the third one we introduce {\em transitive system of groups with projections} which is a part of the data of the definition of a Turaev bicategory, 
and we define Turaev bicategories and Tuarev pseudofunctors. In Section 4 we construct a transitive system of groups for bimonads in $\K$ and 
a Turaev 2-category for bimonads in $\K$. This construction is completed in Subsection 4.3, which ends with some examples. In the last Subsection of the 
paper we study pairs in involution for bimonads in $\K$.

\section{Preliminaries} \selabel{prelim}

We start the preliminary Section by recalling the definition of a Turaev category. We consider the definition from \cite{Zun}, which slightly generates 
the original Turaev's definition of a ``crossed group category'' from \cite{Tur00}. A Turaev category $\Tau$ over a group $G$ is a triple $(\Tau, G, \Fi)$, where 
$\Tau$ is a monoidal category which is a disjoint union of a family of subcategories $\{\Tau_\alpha \s\vert\s \alpha\in G\}$, such that for all $\alpha,\beta\in G$ 
and $X\in\Tau_\alpha, Y\in\Tau_\beta$ the tensor product $X\ot Y\in\Tau_{\alpha\beta}$, and $\Fi:G\to\Aut_0(\Tau), \beta\mapsto\Fi_\beta$ is a group 
homomorphism to the group $\Aut_0(\Tau)$ of strict monoidal automorphism functors of $\Tau$, such that 
$\Fi_\beta(\Tau_\alpha)=\Tau_{\beta\alpha\beta^{-1}}$, for all $\alpha, \beta$.


For the basics on 2-categories and 2-(co)monads we recommend \cite{Be, Bo, Neu, St1}. We fix the following notation. 
The arrows for 2-cells we denote the same way as 1-cells, the distinction will be clear from the context. The horizontal composition of 2-cells 
we denote by $\times$ and the vertical one by $\comp$. Throughout $\K$ will denote a 2-category. The identities between 2-cells in $\K$ acting on 
composable 1-cells we will express in string diagrams, whose source and target objects represent the 1-cells in $\K$. The horizontal juxtaposition 
in string diagrams corresponds to the horizontal composition of 2-cells, while the vertical juxtaposition corresponds to the vertical composition 
of 2-cells. This makes the use of string diagrams natural for computations in 2-categories, as it is for monoidal categories $\C$. 
Multiplication and unit of a monad (or an algebra in $\C$), commultiplication and counit of a comonad (or a coalgebra in $\C$), left action of an algebra and 
a left coaction of a coalgebra in $\C$ we write respectively: 
$$
\mu=\gbeg{2}{1}
\gmu \gnl
\gend \qquad 
\eta=\gbeg{1}{1}
\gu{1} \gnl
\gend \qquad 
\Delta=\gbeg{2}{1}
\gcmu \gnl
\gend \qquad 
\Epsilon=\gbeg{1}{1}
\gcu{1} \gnl
\gend  \qquad 
\gbeg{2}{1}
\glm \gnl
\gend  \qquad 
\gbeg{2}{1}
\glcm \gnl
\gend.
$$

\medskip

We introduced bimonads in $\K$ and their 2-category $\Bimnd(\K)$ in \cite{Femic5}. We will work here with a version of the 2-category $\Bimnd(\K)$ 
differing from the latter in 2-cells, as we did in \cite{Femic7}. We recall the necessary definitions here. 

A {\em bimonad} in $\K$ is a quintuple $(\A, F, \mu, \eta, \Delta, \Epsilon, \lambda)$ where $(\A, F, \mu, \eta)$ is a monad and $(\A, F, \Delta, \Epsilon)$ is a comonad so that 
the following compatibility conditions hold: 
\begin{equation} \eqlabel{bimonad}
\gbeg{3}{5}
\got{1}{F} \got{3}{F} \gnl
\gwmu{3} \gnl
\gvac{1} \gcl{1} \gnl
\gwcm{3} \gnl
\gob{1}{F}\gvac{1}\gob{1}{F}
\gend=
\gbeg{4}{5}
\got{1}{F} \got{3}{F} \gnl
\gcl{1} \gwcm{3} \gnl
\glmptb \gnot{\hspace{-0,34cm}\lambda} \grmptb \gvac{1} \gcl{1} \gnl
\gcl{1} \gwmu{3} \gnl
\gob{1}{F} \gvac{1} \gob{2}{\hspace{-0,34cm}F}
\gend, \quad
\gbeg{2}{3}
\got{1}{F} \got{1}{F} \gnl
\gcl{1} \gcl{1} \gnl
\gcu{1}  \gcu{1} \gnl
\gend=
\gbeg{2}{3}
\got{1}{F} \got{1}{F} \gnl
\gmu \gnl
\gvac{1} \hspace{-0,2cm} \gcu{1} \gnl
\gob{1}{}
\gend, \quad
\gbeg{2}{3}
\gu{1}  \gu{1} \gnl
\gcl{1} \gcl{1} \gnl
\gob{1}{F} \gob{1}{F}
\gend=
\gbeg{2}{3}
\gu{1} \gnl
\hspace{-0,34cm} \gcmu \gnl
\gob{1}{F} \gob{1}{F}
\gend, \quad
\gbeg{1}{2}
\gu{1} \gnl
\gcu{1} \gnl
\gob{1}{}
\gend=
\Id_{id_{\A}}
\end{equation}
and the 2-cell $\lambda: FF\to FF$ is such that $(F, \lambda)$ is a 1-cell both in $\Mnd(\K)$ and in $\Comnd(\K)$ (see \equref{monadic d.l.} and 
\equref{comonadic d.l.} below with $F\s'=X=F$). We will write shortly for a bimonad: $(\A, F, \lambda)$, or just $(F, \lambda)$ or $(\A,F)$. 

\medskip

The 2-category $\Bimnd(\K)$ of bimonads in $\K$ has bimonads for 0-cells, 1-cells are triples $(X,\psi,\phi)$ 
where $(X, \psi)$ is a 1-cell in $\Mnd(\K)$, $(X, \phi)$ is a 1-cell in $\Comnd(\K)$, meaning that the identities 
\vspace{-1,4cm}
\begin{center} \hspace{-0,6cm}
\begin{tabular}{p{7.4cm}p{0cm}p{8cm}}
\begin{equation}\eqlabel{monadic d.l.}
\gbeg{3}{5}
\got{1}{F'}\got{1}{F'}\got{1}{X}\gnl
\gcl{1} \glmpt \gnot{\hspace{-0,34cm}\psi} \grmptb \gnl
\glmptb \gnot{\hspace{-0,34cm}\psi} \grmptb \gcl{1} \gnl
\gcl{1} \gmu \gnl
\gob{1}{X} \gob{2}{F}
\gend=
\gbeg{3}{5}
\got{1}{F'}\got{1}{F'}\got{1}{X}\gnl
\gmu \gcn{1}{1}{1}{0} \gnl
\gvac{1} \hspace{-0,34cm} \glmptb \gnot{\hspace{-0,34cm}\psi} \grmptb  \gnl
\gvac{1} \gcl{1} \gcl{1} \gnl
\gvac{1} \gob{1}{X} \gob{1}{F}
\gend;
\quad
\gbeg{2}{5}
\got{3}{X} \gnl
\gu{1} \gcl{1} \gnl
\glmptb \gnot{\hspace{-0,34cm}\psi} \grmptb \gnl
\gcl{1} \gcl{1} \gnl
\gob{1}{X} \gob{1}{F}
\gend=
\gbeg{3}{5}
\got{1}{X} \gnl
\gcl{1} \gu{1} \gnl
\gcl{2} \gcl{2} \gnl
\gob{1}{X} \gob{1}{F}
\gend
\end{equation} & &
 \begin{equation}\eqlabel{comonadic d.l.}
\gbeg{3}{5}
\got{1}{X} \got{2}{F}\gnl
\gcl{1} \gcmu \gnl
\glmptb \gnot{\hspace{-0,34cm}\phi} \grmptb \gcl{1} \gnl
\gcl{1} \glmptb \gnot{\hspace{-0,34cm}\phi} \grmptb \gnl
\gob{1}{F\s'} \gob{1}{F\s'} \gob{1}{X}
\gend=
\gbeg{3}{5}
\got{2}{X} \got{1}{\hspace{-0,2cm}F}\gnl
\gcn{1}{1}{2}{2} \gcn{1}{1}{2}{2} \gnl
\gvac{1} \hspace{-0,34cm} \glmpt \gnot{\hspace{-0,34cm}\phi} \grmptb \gnl
\gvac{1} \hspace{-0,2cm} \gcmu \gcn{1}{1}{0}{1} \gnl
\gvac{1} \gob{1}{F\s'} \gob{1}{F\s'} \gob{1}{X}
\gend;
\quad
\gbeg{3}{5}
\got{1}{X} \got{1}{F} \gnl
\gcl{1} \gcl{1} \gnl
\glmptb \gnot{\hspace{-0,34cm}\phi} \grmptb \gnl
\gcu{1} \gcl{1} \gnl
\gob{3}{X}
\gend=
\gbeg{3}{5}
\got{1}{X} \got{1}{F} \gnl
\gcl{1} \gcl{1} \gnl
\gcl{2}  \gcu{1} \gnl
\gob{1}{X}
\gend
\end{equation}
\end{tabular}
\end{center} 
hold, and moreover the compatibility 
\begin{equation}  \eqlabel{psi-lambda-phi 2-bimonad}
\gbeg{3}{5}
\got{1}{F'} \got{1}{X} \got{1}{F} \gnl
\glmptb \gnot{\hspace{-0,34cm}\psi} \grmptb \gcl{1} \gnl
\gcl{1} \glmptb \gnot{\hspace{-0,34cm}\lambda} \grmptb \gnl
\glmptb \gnot{\hspace{-0,34cm}\phi} \grmptb \gcl{1} \gnl
\gob{1}{F'} \gob{1}{X} \gob{1}{F}
\gend=
\gbeg{3}{5}
\got{1}{F'} \got{1}{X} \got{1}{F} \gnl
\gcl{1} \glmptb \gnot{\hspace{-0,34cm}\phi} \grmptb \gnl
\glmptb \gnot{\hspace{-0,34cm}\lambda'} \grmptb \gcl{1} \gnl
\gcl{1} \glmptb \gnot{\hspace{-0,34cm}\psi} \grmptb \gnl
\gob{1}{F'} \gob{1}{X} \gob{1}{F}
\gend
\end{equation} 
is fulfilled. The 2-cells of $\Bimnd(\K)$ are 2-cells both in $\Mnd(\K)$ and $\Comnd(\K)$ simultaneously. The composition of the 2-cells is defined in the obvious way, 
the identity 1-cell on a bimonad $(\A,F)$ is given by $(\Id_\A, \id_F)$, and the identity 2-cell on a 1-cell $(X,\psi,\phi)$ is given by $\id_X$.

\section{Turaev bicategories}

In this Section we define Turaev bicategories and Turaev pseudofunctors. A constituting part of the data for the former is a transitive system of groups which we introduce first.

\subsection{Transitive system of groups with projections}

Let $\I$ be an index class. By a {\em transitive system of groups with projections over $\I$} we mean a family of groups $\{G_{\A,\B}\s\vert\s \A,\B\in\I\}$ so that for 
all $\A,\B,\C\in\I$ the following is fulfilled:
\begin{enumerate}
\item there is a {\em transitive product}
$$\hspace{-1,4cm} *^{\C,\B,\A}: G_{\B,\C}\times G_{\A,\B}\to G_{\A,\C}$$
$$\quad(\alpha,\beta)\quad\mapsto\quad\alpha*\beta$$
which is associative, meaning that given a 
fourth index $\D$ and a group $G_{\C,\D}$ the following two compositions of maps are equal:
$$(G_{\C,\D}\times G_{\B,\C})\times G_{\A,\B} \stackrel{*^{\D,\C,\B}\times\id}{\longrightarrow} G_{\B,\D}\times G_{\A,\B} \stackrel{*^{\D,\B,\A}}{\longrightarrow} G_{\A,\D}$$
and:
$$G_{\C,\D}\times (G_{\B,\C}\times G_{\A,\B}) \stackrel{\id\times *^{\C,\B,\A}}{\longrightarrow} G_{\C,\D}\times G_{\A,\C} \stackrel{*^{\D,\C,\A}}{\longrightarrow} G_{\A,\D},$$
and it holds $e_{\B,\B}*\alpha=\alpha=\alpha* e_{\A,\A}$ and $e_{\B,\C}*e_{\A,\B}=e_{\A,\C}$ for all $\alpha\in G_{\A,\B}$, here $e_{-,-}$ denotes the corresponding units, 
and if $\A=\B=\C$ the transitive product $*^{\C,\B,\A}$ coincides with the product in $G_{\A,\A}$;
\item there are {\em projection group maps} $\pi_{12}: G_{\A,\C}\to G_{\B,\C}$ and $\pi_{23}: G_{\A,\C}\to G_{\A,\B}$ yielding 
$$\pi^{\C,\B,\A}: G_{\A,\C}\to G_{\B,\C}\times G_{\A,\B}$$
$$\omega\mapsto (\pi_{12}(\omega), \pi_{23}(\omega))=(\omega_{(1)}, \omega_{(1)})$$
so that: 
\begin{itemize}
 \item the following compatibility between the products in the groups $G_{-,-}$ and the transitive product holds: 
\begin{equation} \eqlabel{conjug compat}
\omega(\beta*\alpha)\omega^{-1}=
(\omega_{(1)}\beta)*(\alpha\omega_{(2)}^{-1})
\end{equation}
for all $\alpha\in G_{\A,\B}, \beta\in G_{\B,\C}$ and $\omega\in G_{\A,\C}$;
\item $\pi^{\C,\B,\A}$ is {\em coassociative}, meaning that given a fourth index $\D$ and a group $G_{\C,\D}$ we have for all $\omega\in G_{\A,\D}$: 
\begin{equation} \eqlabel{coass}
(\omega_{(1)_{(1)}}, \omega_{(1)_{(2)}}, \omega_{(2)}) = (\omega_{(1)}, \omega_{(2)_{(1)}}, \omega_{(2)_{(2)}})
\end{equation}
(we use Sweedler-type notation);
\item $\pi^{\C,\B,\A}$ is {\em counital}: $*(\id\times\Epsilon)\pi^{\C,\B,\A}=\id_{G_{\A,\C}}=*(\Epsilon\times\id)\pi^{\C,\B,\A}$, 
here $\Epsilon$ denotes the trivial group map sending all to the unit element $e$; the former can be written as: 
\begin{equation} \eqlabel{counit law}
\omega_{(1)}* e=\omega = e*\omega_{(2)}
\end{equation}
for all $\omega\in G_{\A,\C}$. 
\end{itemize}
\end{enumerate}

\bigskip

A transitive system of groups with projections as above we denote by $(\{G_{\A,\B}\s\vert\s \A,\B\in\I\}, *, \pi)$. 
From the above definition we have: 
\begin{equation} \eqlabel{antipode rule}
\omega_{(1)}^{-1}*\omega_{(2)}=e_{G_{\A,\C}}=\omega_{(1)}\omega_{(2)}^{-1};
\end{equation}
if $\A=\B=\C$ the group map $\pi: G\to G\times G$ is given by $\pi(g)=(g,g), \forall g\in G$; \\
if $\A=\B$ then $\pi_{12}=\id_{G_{\B,\C}}$, and if $\B=\C$ then $\pi_{23}=\id_{G_{\A,\B}}$.

\subsection{Turaev bicategories and Turaev pseudofunctors}



We will denote by $\Aut(\C)$ the group of automorphisms of a category $\C$. 

\begin{defn} \delabel{2-Tur}
A Turaev bicategory consist of the following data: 
\begin{enumerate}
\item a class of objects {\em i.e.} 0-cells $\Ll^o$;
\item a transitive system of groups with projections over $\Ll^o$: $\left( \{G_{\A,\B}\s\vert\s \A,\B\in\Ll^o\}, *, \pi \right)$; 
\item for any pair of objects $\A, \B\in\Ll^o$:
\begin{enumerate} [i)]
\item a category $\Ll(\A, \B)=\displaystyle{\bigcup_{\alpha\in G_{\A,\B}}^{\bullet} } \Ll(\A, \B)_{\alpha}$ 
which is a disjoint union of categories $\Ll(\A, \B)_{\alpha}$, and 
\item a group map $\Fi^{\A,\B}: G_{\A,\B}\to\Aut(\Ll(\A, \B)), \beta\mapsto\Fi^{\A,\B}_\beta$ such that 
$\Fi^{\A,\B}_\beta(\Ll(\A, \B)_\alpha)=\Ll(\A, \B)_{\beta\alpha\beta^{-1}}$ for every $\alpha, \beta\in G_{\A,\B}$;
\end{enumerate}
\item for all $\A\in\Ll^o$ there is a 1-cell $\Id_\A\in\Ll(\A,\A)_e$, where $e$ is the unit element of $G_{\A,\A}$;
\item for every pair of objects $\A, \B\in\Ll^o$ and every 1-cell $X$ in $\Ll(\A, \B)_\alpha$ there is a 2-cell $\id_X:X\to X$;
\item for all $\A,\B,\C\in\Ll^o$:
\begin{enumerate} [i)]
\item for all $\alpha\in G_{\A,\B}, \beta\in G_{\B,\C}$ there are functors 
$$c^{\beta,\alpha}_{\C,\B,\A}: \Ll(\B, \C)_\beta\times\Ll(\A, \B)_\alpha\to\Ll(\A, \C)_{\beta*\alpha}$$
$$\hspace{2,1cm} (Y,X)\hspace{1,3cm}\mapsto\hspace{0,6cm} Y\cdot X$$
\item for all $\omega\in G_{\A,\C}$ and $Y\in\Ll(\B, \C)_\beta, X\in\Ll(\A, \B)_\alpha$ there are isomorphisms (2-cells) natural in $X$ and $Y$ 
$$s^{\C,\B,\A}_{Y,X}: \Fi^{\A,\C}_\omega(Y\cdot X) \longrightarrow 
\Fi^{\B,\C}_{\omega_{(1)}}(Y) \cdot \Fi^{\A,\B}_{\omega_{(2)}}(X)$$
and a bijective 2-cell: 
$$s^\A_0: \Fi^{\A,\A}_\gamma(\Id_\A)\to\Id_\A$$
for all $\gamma\in G_{\A,\A}$; 
\end{enumerate} 
\item for all $\A,\B,\C,\D\in\Ll^o$ and all $\alpha\in G_{\A,\B}, \beta\in G_{\B,\C},\gamma\in G_{\C,\D}$:
 \begin{enumerate} [i)]
  \item  there are isomorphisms (2-cells) natural in composable 1-cells 
$Z\in\Ll(\C, \D)_\gamma, Y\in\Ll(\B, \C)_\beta, X\in\Ll(\A, \B)_\alpha$ defining the associativity law 
$$a^{\gamma,\beta,\alpha}_{Z,Y,X}: (Z\cdot Y)\cdot X\to Z\cdot (Y\cdot X),$$
and for each  $\A,\B\in\Ll^o$ and $\alpha\in G_{\A,\B}$ there are isomorphisms (2-cells) natural in $X\in\Ll(\A, \B)_\alpha$ defining the left and right unity laws 
$$\lambda_X: \Id_\B\cdot X\to X, \hspace{2cm} \rho_X: X\cdot\Id_\A\to X$$
such that the following pentagonal and triangular diagrams commute for all composable 1-cells 
$W\in\Ll(\D, \E)_\delta, Z\in\Ll(\C, \D)_\gamma, Y\in\Ll(\B, \C)_\beta, X\in\Ll(\A, \B)_\alpha$ and all $\alpha\in G_{\A,\B}, \beta\in G_{\B,\C},\gamma\in G_{\C,\D}, \delta\in G_{\D,\E}$: 
\begin{equation*}
\scalebox{0.84}{
\bfig
\putmorphism(-200,500)(1,0)[((W\cdot Z)\cdot Y)\cdot X` (W\cdot (Z\cdot Y))\cdot X ` a^{\delta,\gamma,\beta}_{W,Z,Y}\times\id]{1360}1a
\putmorphism(1160,500)(1,0)[\phantom{((W\cdot Z)\cdot Y)\cdot X}` W((ZY)X) ` a^{\delta,\gamma\beta,\alpha}_{W,ZY,X}]{1200}1a
\putmorphism(2370,500)(0,-1)[``\id\times a^{\gamma,\beta,\alpha}_{Z,Y,X}]{500}1r
\putmorphism(-160,500)(0,-1)[``a^{\delta\gamma,\beta,\alpha}_{WZ,Y,X}]{500}1l
\putmorphism(-200,0)(1,0)[(W\cdot Z)\cdot (Y\cdot X)` W\cdot (Z\cdot (Y\cdot X))` a^{\delta,\gamma,\beta\alpha}_{W,Z,YX}]{2650}1b
\efig}
\end{equation*}

$$\scalebox{0.84}{
\bfig
\putmorphism(-80,500)(1,0)[(Y\cdot\Id_\B)\cdot X ` Y\cdot (\Id_\B\cdot X)` a_{Y,\Id_\B,X}]{1080}1a
\putmorphism(20,500)(1,-1)[`YX.`\rho_Y\times Y]{430}1l
\putmorphism(930,500)(-1,-1)[``Y \times\lambda_X]{430}1r
\efig}
$$
\item 
for all $Z\in\Ll(\C, \D)_\gamma, Y\in\Ll(\B, \C)_\beta, X\in\Ll(\A, \B)_\alpha$ the following hexagon and two triangles commute:
$$\hspace{-0,6cm}
\scalebox{0.84}{
\bfig
\putmorphism(0,500)(1,0)[ (\Fi^{\C, \D}_{\omega_{(1)_{(1)}}}(Z)\cdot\Fi^{\B, \C}_{\omega_{(1)_{(2)}}}(Y))\cdot\Fi^{\A, \B}_{\omega_{(2)}}(X)  
    `\Fi^{\B, \D}_{\omega_{(1)}}(Z\cdot Y)\cdot\Fi^{\A, \B}_{\omega_{(2)}}(X) ` s^{\D,\B,\A}_{Z,Y}\times\id]{1620}{-1}a
\putmorphism(1720,500)(1,0)[\phantom{\Fi^{\B, \D}_{\omega_{{(1)}}(Z\cdot Y)\cdot\Fi^{\A, \B}_{\omega_{(2)}}(X)}} ` \Fi^{\A, \D}_\omega((Z\cdot Y)\cdot X) ` 
    s_{ZY,X}^{\D,\B,\A}]{1000}{-1}a
\putmorphism(2770,500)(0,-1)[` ` \Fi^{\A, \D}_\omega( a_{Z,Y,X})]{500}1l
\putmorphism(-160,500)(0,-1)[``a_{\Fi(Z),\Fi(Y),\Fi(X)}]{500}1l
\putmorphism(0,0)(1,0)[\Fi^{\C, \D}_{\omega_{{(1)}}}(Z)\cdot(\Fi^{\B, \C}_{\omega_{(2)_{(1)}}}(Y)\cdot\Fi^{\A, \B}_{\omega_{(2)_{(2)}}}(X)) `  
   \Fi^{\C, \D}_{\omega_{(1)}}(Z)\cdot\Fi^{\A, \C}_{\omega_{(2)}}(Y\cdot X) ` \id\times s_{Y,X}^{\C,\B,\A}]{1620}{-1}b
\putmorphism(1620,0)(1,0)[\phantom{\Fi^{\C, \D}_{\omega_{(1)}}(Z)\comp\Fi^{\A, \C}_{\omega_{(2)}}(YX)} ` \Fi^{\A, \D}_\omega(Z\cdot (Y\cdot X)) ` s_{Z,YX}^{\D,\C,\A}]{1260}{-1}b
\efig}
$$
where $\omega\in G_{\A,\D}$ and: 
$$\scalebox{0.84}{
\bfig
\putmorphism(-80,500)(1,0)[\Id_\B\cdot\Fi^{\A,\B}_{\omega_{(2)}}(X) ` \Fi^{\B,\B}_{\omega_{(1)}}(\Id_\B)\cdot\Fi^{\A,\B}_{\omega_{(2)}}(X) ` s_0^{\B}\times\id]{1080}{-1}a
\putmorphism(1000,500)(1,0)[\phantom{ \Fi^{\B,\B}(\Id_\B)\cdot\Fi^{\A,\B}(X)} ` \Fi^{\A,\B}_\omega(\Id_\B\cdot X) ` s_{\Id_\B,X}^{\B,\B,\A}]{1080}{-1}a
\putmorphism(20,500)(2,-1)[`\Fi^{\A,\B}_\omega(X)`]{930}1l
\putmorphism(-20,500)(2,-1)[``\lambda_{\Fi(X)}]{930}0l
\putmorphism(2060,500)(-2,-1)[``\Fi^{\A,\B}_\omega(\lambda)]{1030}1r
\efig}
$$
$$\scalebox{0.84}{
\bfig
\putmorphism(-80,500)(1,0)[\Fi^{\A,\B}_{\omega_{(1)}}(X)\cdot\Id_\A ` \Fi^{\A,\B}_{\omega_{(1)}}(X)\cdot\Fi^{\A,\A}_{\omega_{(2)}}(\Id_\A) ` \id\times s_0^{\A}]{1080}{-1}a
\putmorphism(1000,500)(1,0)[\phantom{ \Fi^{\B,\B}(\Id_\B)\cdot\Fi^{\A,\B}(X)} ` \Fi^{\A,\B}_\omega(X\cdot\Id_\A) ` s_{X,\Id_\A}^{\B,\A,\A}]{1080}{-1}a
\putmorphism(20,500)(2,-1)[`\Fi^{\A,\B}_\omega(X).`]{930}1l
\putmorphism(-50,500)(2,-1)[``\rho_{\Fi(X)}]{930}0l
\putmorphism(2060,500)(-2,-1)[``\Fi^{\A,\B}_\omega(\rho)]{1030}1r
\efig}
$$
\end{enumerate}
where $\omega\in G_{\A,\B}$. 
\end{enumerate}

A Turaev bicategory is said to be {\em Turaev 2-category} if the isomorphisms $s^{\C,\B,\A}_{Y,X}, s^\A_0, a^{\gamma,\beta,\alpha}_{Z,Y,X}, 
\lambda_X$ and $\rho_X$ are identities (in this case the five diagrams in the point 7. above trivially commute). 
\end{defn}


In a Turaev bicategory for every 0-cell $\A$ in $\Ll$ there is a group $G=G_{\A,\A}$, a monoidal category $\Ll(\A,\A)=
\displaystyle{\bigcup_{\alpha\in G}^{\bullet}}\Ll(\A, \A)_{\alpha}$ by 6 $i)$ and 7 $i)$ and the functors $\Fi_\beta^{\A,\A}$ are monoidal by 7 ii). 
If the isomorphisms $s^{\A,\A,\A}_{Y,X}$ and $s_0^\A$ from 6 $ii)$ 
are identities, $\Ll(\A,\A)$ is a Turaev category. 
Moreover, if for every pair of 0-cells $\A,\B\in\Ll^o$ the group $G_{\A,\B}$ is trivial, then $\Ll$ is a bicategory. 

\medskip

In view of the latter, if in a Turaev bicategory $\Ll$ for every $\A\in\Ll^0$ it is $\A=\displaystyle{\bigcup_{\alpha\in G_{\A,\A}}^{\bullet} } \A_\alpha$ and 
for every $\A,\B\in\Ll^0$ it is $\Ll(\A, \B)_e=\Ll(\A_e, \B_e)_e$, then $\Ll$ has an underlying bicategory $\U(\Ll)$ and  
there is a forgetful pseudofunctor from $\Ll$ to $\U(\Ll)$. 
We define them as follows. 
For every 0-cell $\A=\displaystyle{\bigcup_{\alpha\in G_{\A,\A}}^{\bullet} } \A_\alpha$ in $\Ll$ let $\A_e$ be a 0-cell of $\U(\Ll)$. 
For every two 0-cells $\A,\B\in(\U(\Ll))^0$ we define the category $\U(\Ll)(\A, \B)=\Ll(\A, \B)_e$. 
The forgetful pseudofunctor $\F: \Ll\to\U(\Ll)$ is defined as the obvious one-to-one correspondence on 0-cells and for every two 0-cells $\A,\B\in\Ll^0$ the functor 
$\F_{\A,\B}: \Ll(\A, \B)=\displaystyle{\bigcup_{\alpha\in G_{\A,\B}}^{\bullet} } \Ll(\A, \B)_{\alpha} \to \Ll(\A, \B)_e$ sends the categories 
$\Ll(\A, \B)_{\alpha}$ with $\alpha\not=e$ to the empty category and it is identity on $\Ll(\A, \B)_e$. 

The other way around, if $\Ll$ is a bicategory and there is a Turaev bicategory $\Ll^T$ such that $\Ll$ is the underlying bicategory of $\Ll^T$, 
we say that  $\Ll^T$ is an {\em extension of $\Ll$ to a Turaev bicategory}, or a {\em Turaev extension of $\Ll$}. 

\bigskip

\begin{rem}
We can consider the following version of the definition of a Turaev bicategory. Let $\Cat(G)$ denote the monoidal category whose objects are the 
elements of a group $G$, the only morphisms are the identities and the tensor product is given by the product in $G$. For a category $\C$ 
let $\u{\Aut}(\C)$ denote the monoidal category of auto-equivalences of $\C$ and natural isomorphisms between them, where the tensor product is given
by the composition of functors. 

Now, in the other version of the above definition substitute the group map $\Fi^{\A,\B}: G_{\A,\B}\to\Aut(\Ll(\A, \B)), \beta\mapsto\Fi^{\A,\B}_\beta$ 
from the point 3 $ii)$ by a monoidal functor $\Fi^{\A,\B}: \Cat(G_{\A,\B})\to\u{\Aut}(\Ll(\A, \B)), \beta\mapsto\Fi^{\A,\B}_\beta$. This means that for 
every $\alpha,\beta\in G_{\A,\B}$ there is an isomorphism $r_{\alpha,\beta}: \Fi_\alpha\comp \Fi_\beta\stackrel{\iso}{\to}\Fi_{\alpha\beta}$ 
defining the monoidal structure of the functor $\Cat(G_{\A,\B})\to\u{\Aut}(\Ll(\A, \B))$ ($r_{\alpha,\beta}$ satisfies the coherence hexagon). 

As above, we have that $\Ll(\A, \A)$ is a monoidal category and that the functors $\Fi_\beta^{\A,\A}$ are monoidal. In this setting, though, this 
means that there is an action of the group $G_{\A,\A}$ on the category $\Ll(\A,\A)$ (the definition of an action of a group on a monoidal category 
goes back to \cite{Del}.) 
In particular, one may consider a crossed product category $\Ll(\A,\A)\rtimes G$, \cite[Section 3.1]{Nik}, \cite{Tamb1}. 

This definition of a Turaev bicategory is more general: to pass from it to the former consider the truncations of the categories, {\em i.e.} 
categorical groups $\Cat(G_{\A,\B})$ and $\u{\Aut}(\Ll(\A, \B))$ to get the corresponding groups and recover the group map 
$\Fi^{\A,\B}: G_{\A,\B}\to\Aut(\Ll(\A, \B)), \beta\mapsto\Fi^{\A,\B}_\beta$ from 3 $ii)$. (By truncation we mean forgetting the morphisms 
and identifying the isomorphic objects of the respective categories. Then clearly the category $\Cat(G_{\A,\B})$ yields the group $G_{\A,\B}$ and the 
category of auto-equivalences $\u{\Aut}(\Ll(\A, \B))$ comes down to the group of automorphisms $\Aut(\Ll(\A, \B))$ of the category $\Ll(\A, \B)$.) 
\end{rem}

\bigskip

Let us define pseudofunctors between Turaev bicategories.

\begin{defn}
A Turaev pseudofunctor $\F:\Ll\to\Pp$ between Turaev bicategories $(\Ll, \Fi^\Ll,a^\Ll,\lambda^\Ll,\rho^\Ll)$ and $(\Pp, \Fi^\Pp,a^\Pp,\lambda^\Pp,\rho^\Pp)$ consist of the following data: 
\begin{enumerate}
\item an assignment $\A\mapsto\F(\A)$ for every object $\A\in\Ll^o$;
\item a family of group maps $(\psi_{\A,\B}: G_{\A,\B}\to G_{\F(\A),\F(\B)} \vert \A,\B\in\Ll^o)$; 
\item for any pair of objects $\A, \B\in\Ll^o$ a functor $\F_{\A,\B}: \Ll(\A, \B)\to\Pp(\F(\A),\F(\B))$ such that for all $\alpha\in G_{\A,\B}$ it holds: 
\begin{enumerate}[i)]
\item $\F_{\A,\B}(\Ll(\A, \B)_{\alpha})\subseteq\Pp(\F(\A),\F(\B))_{\psi_{\A,\B}(\alpha)}$,
\item $\F_{\A,\B}(\Fi^\Ll_\beta(\Ll(\A, \B)_\alpha)) = \Fi^\Pp_{\psi_{\A,\B}(\beta)} (\F_{\A,\B}(\Ll(\A, \B)_\alpha))$ in 
$\Fi^\Pp_{\psi_{\A,\B}(\beta)}\left(\Pp(\F(\A), \F(\B))_{\psi_{\A,\B}(\beta\alpha\beta^{-1})}\right)$ for every $\alpha, \beta\in G_{\A,\B}$;
\end{enumerate}
\item for all $\A,\B,\C\in\Ll^o$ and $\alpha\in G_{\A,\B}, \beta\in G_{\B,\C}$ there are isomorphisms (2-cells) natural in composable 1-cells 
$Y\in\Ll(\B, \C)_\beta, X\in\Ll(\A, \B)_\alpha$:  
$$\xi^{\beta,\alpha}_{Y,X}: \F_{\B, \C}^\beta(Y)\cdot\F_{\A, \B}^\alpha(X) \to \F_{\A, \C}^{\beta\alpha}(Y\cdot X)$$
and for all $\A\in\Ll^o$ there is an isomorphism (2-cell) natural in $\A$: 
$$\xi^0_\A: \Id_{\F(\A)}\to\F_{\A,\A}^e(\Id_\A),$$
 where $e$ is the unit element of $G_{\A,\A}$ 
(the functors with supra-indexes $\F^{\bullet}_{-, -}$ are the obvious restrictions of the functors $\F_{-, -}$\s, similarly in 
$\xi^{\beta,\alpha}_{\C,\B,\A}$\s, the supra-indexes may be omitted),  
such that the following hexagonal and triangular diagrams commute for all composable 1-cells 
$Z\in\Ll(\C, \D)_\gamma, Y\in\Ll(\B, \C)_\beta, X\in\Ll(\A, \B)_\alpha$ and all $\alpha\in G_{\A,\B}, \beta\in G_{\B,\C},\gamma\in G_{\C,\D}$: 
\smallskip
\begin{equation*}
\scalebox{0.84}{
\bfig
\putmorphism(-200,500)(1,0)[(\F_{\C, \D}(Z)\cdot \F_{\B, \C}(Y))\cdot \F_{\A, \B}(X) ` \F_{\B, \D}(Z\cdot Y)\cdot\F_{\A, \B}(X) ` 
   \xi_{Z,Y}^{\gamma,\beta}\times\id]{1620}1a
\putmorphism(1420,500)(1,0)[\phantom{\F_{\B, \D}(ZY)\comp\F_{\A, \B}(X)} ` \F_{\A, \D}((Z\cdot Y)\cdot X) ` \xi_{Z\cdot Y,X}^{\gamma\beta \s,\alpha}]{1100}1a
\putmorphism(2470,500)(0,-1)[` ` \F_{\A, \D}( a^\Ll)]{500}1l
\putmorphism(-160,500)(0,-1)[``a^\Pp]{500}1l
\putmorphism(-200,0)(1,0)[\F_{\C, \D}(Z)\cdot(\F_{\B, \C}(Y)\cdot\F_{\A, \B}(X)) ` \F_{\C, \D}(Z)\cdot\F_{\A, \C}(Y\cdot X) ` 
   \id\times \xi_{Y,X}^{\beta,\alpha}]{1620}1b
\putmorphism(1420,0)(1,0)[\phantom{\F_{\C, \D}(Z)\comp\F_{\A, \C}(YX)} ` \F_{\A, \D}(Z\cdot (Y\cdot X)) ` \xi_{Z,Y\cdot X}^{\gamma,\beta\alpha}]{1100}1b
\efig}
\end{equation*}

\bigskip

$$\scalebox{0.84}{
\bfig
\putmorphism(-230,500)(1,0)[\Id_{\F(\B)}\cdot\F_{\A,\B}(X) ` \F_{\B,\B}(\Id_\B)\cdot\F_{\A,\B}(X) ` \xi^0_{\B}\times\id]{1230}1a
\putmorphism(1000,500)(1,0)[\phantom{ \F_{\B,\B}(\Id_\B)\cdot\F_{\A,\B}(X)} ` \F_{\A,\B}(\Id_\B\cdot X) ` \xi_{\Id_\B,X}^{e,\alpha}]{1080}1a
\putmorphism(20,500)(2,-1)[`\F_{\A,\B}(X)`]{930}1l
\putmorphism(-250,500)(2,-1)[``\lambda^\Pp]{930}0l
\putmorphism(2060,500)(-2,-1)[``\F_{\A,\B}(\lambda^\Ll)]{1030}1r
\efig}
$$

\smallskip

$$\scalebox{0.84}{
\bfig
\putmorphism(-230,500)(1,0)[\F_{\A,\B}(X)\cdot\Id_{\F(\A)} ` \F_{\A,\B}(X)\cdot\F_{\A,\A}(\Id_\A) ` \id\times\xi^0_{\A}]{1230}1a
\putmorphism(1000,500)(1,0)[\phantom{ \F_{\B,\B}(\Id_\B)\comp\F_{\A,\B}(X)} ` \F_{\A,\B}(X\cdot\Id_\A) ` \xi_{X,\Id_\A}^{\alpha,e}]{1080}1a
\putmorphism(20,500)(2,-1)[`\F_{\A,\B}(X).`]{930}1l
\putmorphism(-250,500)(2,-1)[``\rho^\Pp]{930}0l
\putmorphism(2060,500)(-2,-1)[``\F_{\A,\B}(\rho^\Ll)]{1030}1r
\efig}
$$
\end{enumerate}
\end{defn}

If for $\A\in\Ll^0$ the groups $G_{\A,\A}$ and $G_{\F(\A),\F(\A)}$ are equal, then $\F_{\A,\A}$ is a Turaev functor between Turaev categories 
$\Ll(\A,\A)$ and $\Pp(\F(\A),\F(\A))$. On the other hand, if for all $\A,\B\in\Ll^0$ and $\A',\B'\in\Pp^0$ the groups $G_{\A,\B}$ and $G_{\A',\B'}$ are trivial, 
then a Turaev pseudofunctor is a pseudofunctor between bicategories $\Ll(\A,\B)$ and $\Pp(\A',\B')$.

\section{Turaev 2-category for bimonads in $\K$}

Let $\K$ be a 2-category. We recalled our definition of bimonads in $\K$ and their 2-category $\Bimnd(\K)$ in the preliminary Section. 
In this Section we are going to construct a Turaev extension of $\Bimnd(\K)$. In this Turaev 2-category the 0- and 1-cells are 0- and 1-cells of $\Bimnd(\K)$ 
expanded and twisted by certain automorphisms of bimonads. We start by presenting the automorphisms that we are going to be interested in.

\subsection{A subgroup of the group of automorphisms of a bimonad in a 2-category}

Let $(\A, F, \lambda)\cong(\A, F)$ be a bimonad in $\K$ and consider an invertible 1-cell $\alpha: F\to F$ in $\K$. We observe that 
$(\Id_\A, \alpha): (\A, F)\to(\A, F)$ is a 1-cell in $\Mnd(\K)$ if and only if 
\begin{equation} \eqlabel{alg m}
\scalebox{0.86}{
\gbeg{3}{4}
\got{1}{F} \got{3}{F} \gnl
\gwmu{3} \gnl
\gvac{1} \gbmp{\alpha} \gnl
\gvac{1} \gob{1}{F}
\gend }=
\scalebox{0.86}{
\gbeg{3}{4}
\got{1}{F} \got{3}{F} \gnl
\gbmp{\alpha} \gvac{1} \gbmp{\alpha} \gnl
\gwmu{3} \gnl
\gob{3}{F}
\gend}
\qquad \textnormal{and} \qquad
\scalebox{0.86}{
\gbeg{1}{4}
\gu{1} \gnl
\gbmp{\alpha} \gnl
\gcl{1} \gnl
\gob{1}{F}
\gend} = 
\scalebox{0.86}{
\gbeg{1}{4}
\gu{1} \gnl
\gcl{2} \gnl
\gob{1}{F}
\gend}
\end{equation} 
holds. Similarly, $(\Id_\A, \alpha): (\A, F)\to(\A, F)$ is a 1-cell in $\Comnd(\K)$ if and only if 
\begin{equation} \eqlabel{coalg m}
\scalebox{0.86}{
\gbeg{3}{4}
\got{3}{F} \gnl
\gvac{1} \gbmp{\alpha} \gnl
\gwcm{3} \gnl
\gob{1}{F} \gob{3}{F} \gnl
\gend }=
\scalebox{0.86}{
\gbeg{3}{4}
\got{3}{F} \gnl
\gwcm{3} \gnl
\gbmp{\alpha} \gvac{1} \gbmp{\alpha} \gnl
\gob{1}{F} \gob{3}{F} \gnl
\gend}
\qquad \textnormal{and} \qquad
\scalebox{0.86}{
\gbeg{1}{4}
\got{1}{F} \gnl
\gcl{1} \gnl
\gbmp{\alpha} \gnl
\gcu{1} \gnl
\gend} = 
\scalebox{0.86}{
\gbeg{1}{4}
\got{1}{F} \gnl
\gcl{2} \gnl\gnl
\gcu{1} \gnl
\gend}
\end{equation}
holds. Composing \equref{alg m} with $\alpha^{-1}\times\alpha^{-1}$ from above and with $\alpha^{-1}$ from below (and dually for \equref{coalg m}) 
we see that $(\Id_\A, \alpha): (\A, F)\to(\A, F)$ is a 1-cell in $\Mnd(\K)$ if and only if so is $(\Id_\A, \alpha^{-1}): (\A, F)\to(\A, F)$, and that 
$(\Id_\A, \alpha): (\A, F)\to(\A, F)$ is a 1-cell in $\Comnd(\K)$ if and only if so is $(\Id_\A, \alpha^{-1}): (\A, F)\to(\A, F)$. Then it makes sense to consider a 1-cell 
$(\Id_\A, \alpha, \alpha^{-1})$ in $\Bimnd(\K)$. The additional condition that the latter fulfills is:
\begin{equation} \eqlabel{YD cond for alfa}
\gbeg{3}{5}
\got{1}{F} \got{1}{F} \gnl
\gbmp{\alpha} \gcl{1} \gnl
\glmptb \gnot{\hspace{-0,34cm}\lambda} \grmptb \gnl
\gbmp{\s\alpha^{\w-1}} \gcl{1} \gnl
\gob{1}{F} \gob{1}{F}
\gend=
\gbeg{3}{5}
\got{1}{F} \got{1}{F} \gnl
\gcl{1} \gbmp{\s\alpha^{\w-1}} \gnl
\glmptb \gnot{\hspace{-0,34cm}\lambda} \grmptb \gnl
\gcl{1} \gbmp{\alpha} \gnl
\gob{1}{F} \gob{1}{F.}
\gend
\end{equation}
Then similarly as above we have that $(\Id_\A, \alpha, \alpha^{-1})$ is a 1-cell in $\Bimnd(\K)$ if and only if so is $(\Id_\A, \alpha^{-1}, \alpha)$, 
and these two 1-cells are inverse to each other. Now we define $G(F)=\Aut_0(F)$ to be the group of 1-endocells in $\Bimnd(\K)$ on the 0-cell $(\A, F)$ 
of the form $(\Id_\A, \alpha, \alpha^{-1})$, where $\alpha:F\to F$ is an invertible 1-cell in $\K$. It is a subgroup of the group of automorphisms of 
the 0-cell $(\A, F)$ in $\Bimnd(\K)$. We will refer to such automorphisms as to {\em 0-automorphisms of $F$}.

\subsection{Fusion group maps}

Let us consider the partition of the class $\Bimnd(\K)^0$ of all bimonads in $\K$ into classes of bimonads whose 0-automorphism groups are isomorphic. That is:
$$\Bimnd(\K)^0=\displaystyle{\bigcup_{\omega\in \I}^{\bullet}\Bimnd_\omega(\K)^0} $$
for some index class $\I$, so that for every $F,F\s'\in \Bimnd_\omega(\K)^0$ it is $G(F)\iso G(F\s')$. In every class $\Bimnd_\omega(\K)^0$ we will choose a 
family of group maps $(j_{F,F\s'}: G(F)\to G(F\s') \vert \forall F, F\s'\in \Bimnd(\K)^0)$ such that 
\begin{enumerate}
\item $j_{F,F}=id_{G(F)}$, 
\item $j_{F,F\s'}=(j_{F\s',F})^{-1}$, 
\item $j_{F,F\s''}=j_{F\s',F\s''}\comp j_{F,F\s'}$
\end{enumerate}
for all $F, F\s', F\s''\in \Bimnd_\omega(\K)^0)$. In other words, in every class $\Bimnd_\omega(\K)^0$ we have a directed system of groups with 
group isomorphisms. The group isomorphisms $j$ we will call {\em fusion maps}. 

\medskip

Given two bimonads $F, F\s'$ in $\K$ with their respective groups $G, G'$ and a fusion map $j: G\to G'$ it is directly checked that 
\begin{equation} \eqlabel{product G'G}
(\alpha,\beta)*(\gamma,\delta)=(\alpha\gamma,\delta j^{-1}(\gamma^{-1})\beta j^{-1}(\gamma))
\end{equation} 
with $\alpha, \gamma\in G', \beta, \delta\in G$, defines an associative multiplication on the product of groups $G'\times G$. 
The unit for this multiplication is given by $(e_{G'},e_G)$, where $e_{G'},e_G$ denote the unit of the corresponding groups, 
and it is 
$$(\alpha,\beta)^{-1}=(\alpha^{-1}, j^{-1}(\alpha)\beta^{-1}j^{-1}(\alpha^{-1})).$$

\medskip

Given three bimonads $F, F\s', F\s''$ in $\K$ with their respective groups $G, G', G'''$ and fusion maps $j: G\to G', j':G'\to G''$ 
we can consider groups $G''\times G', G''\times G$ with the analogous product as in \equref{product G'G} (where $j$ is substituted by $j\s'$ in the former case, 
and by $j\s^{02}=j\s' j$ in the latter).

We will consider the transitive product on the groups
$$ m^{G'', G',G}_{j\s',j}: (G''\times G') \times (G'\times G) \to (G''\times G)$$
defined by 
\begin{equation} \eqlabel{trans product}
(\alpha,\beta)*(\gamma,\delta)=(\alpha j\s'(\gamma),\delta\cdot j^{-1}(\gamma^{-1}\beta\gamma))
\end{equation} 
for $\alpha\in G'', \beta, \gamma\in G', \delta\in G$. We will use the same symbol $*$ for the product in $G' \times G$ from \equref{product G'G} and the above defined transitive product,
the difference will be clear from the context. Similarly as above, it is directly proved that if we are given a fourth bimonad $F\s'''$ in $\K$ with 
its corresponding group $G'''$ and a fusion map $j\s'': G''\to G'''$, then the transitive product is associative. This means that setting $j\s^{13}=j\s''j\s'$, we have that 
the composition of maps: 
$$\left((G'''\times G'') \times(G''\times G')\right) \times (G'\times G) \to (G''\times G) \stackrel{m^{G''', G'', G'}_{j\s'',j\s'}\times\id}{\longrightarrow} (G'''\times G')\times (G'\times G) 
\stackrel{m^{G''', G', G}_{j\s^{13},j}}{\longrightarrow} (G'''\times G)$$
is equal to the composition:
$$(G'''\times G'') \times \left((G''\times G') \times (G'\times G)\right) \to (G''\times G) \stackrel{\id\times m^{G'', G', G}_{j\s',j}}{\longrightarrow} (G'''\times G'')\times (G''\times G) 
\stackrel{m^{G''', G'', G}_{j\s'',j\s^{02}}}{\longrightarrow} (G'''\times G).$$ 
Concretely, given $(\alpha,\beta)\in G'''\times G'', (\gamma,\delta)\in G''\times G', (\mu,\nu)\in G'\times G$ we find that: 
\begin{equation} \eqlabel{triple trans prod}
\left((\alpha, \beta)*(\gamma, \delta)\right)*(\mu,\nu)=
(\alpha j\s''(\gamma) j\s^{13}(\mu), \nu j^{-1}\left(\mu^{-1}\delta j\s'^{-1}(\gamma^{-1}\beta\gamma)\mu\right))
\end{equation}
and that 
$$(\alpha, \beta)*\left((\gamma, \delta)*(\mu,\nu)\right)=(\alpha j\s''(\gamma j\s'(\mu)), \nu j^{-1}(\mu^{-1}\delta\mu) 
(j\s^{02})^{-1}\left(j\s'(\mu^{-1}) \gamma^{-1}\beta\gamma j\s'(\mu)\right)),$$
which are the same. Moreover, we have $(e_{G''}, e_{G'})*(\gamma, \delta)=(j\s'(\gamma),\delta)$ and $(\alpha, \beta)*(e_{G'}, e_G)=(\alpha, j^{-1}(\beta))$.

Furthermore, we will consider the projection maps 
$$\pi_1: G''\times G \to G''\times G' \qquad\text{and}\qquad \pi_2: G''\times G \to G'\times G$$
given by 
$$\pi_1=\id\times j \qquad\text{and}\qquad \pi_2=(j\s')^{-1}\times\id.$$
They are group maps and they induce the group map 
\begin{equation} \eqlabel{pi for bim}
\pi: G''\times G \to (G''\times G')\times (G'\times G), (\alpha, \beta)\mapsto \left((\alpha, j(\beta)), ((j\s')^{-1}(\alpha), \beta)\right).
\end{equation} 
It is directly checked that $\pi$ satisfies the identities \equref{conjug compat}--\equref{counit law}. 

\smallskip

Thus we have constructed a transitive system of groups with projections in every class $\Bimnd_\omega(\K)^0$. 
If $F$ and $F\s'$ are bimonads in $\K$ so that $G(F)=G$ and $G(F\s')=G'$ are not isomorphic, we set $G_{F,F\s'}=\{e\}$. 
The total transitive system of groups with projections over $\Bimnd(\K)$ we may denote by $(\{G_{F,F\s'}\s\vert\s F,F\s'\in\Bimnd(\K)\}, *, \pi)$. 
Here $G_{F,F\s'}=G'\times G$.

\begin{rem}
When $\K$ is the 2-category induced by the (braided) monoidal category of vector spaces over a field $k$ bimonads in $\K$ are $k$-bialgebras. The 0-automorphism 
groups of bimonads in $\K$ are bialgebra automorphism groups. Just to mention some, in \cite{Shil} is presented a list of references where some automorphism groups 
of Hopf algebras are computed, few are computed also in \cite{VZ4}. For example for the Sweedler's four-dimensional Hopf algebra the automorphism group is isomorphic to 
$k^*=k\backslash\{0\}$, and for the Radford's Hopf algebra of dimension $m2^{n+1}$ over the complex numbers field $C$  the automorphism group is isomorphic to $GL_n(C)$. 
\end{rem}

\subsection{Turaev 2-category for bimonads in $\K$}

We define the 2-category $\Bimnd^T(\K)$  in the following way. 

\underline{0-cells:} are triples $(\A, F, (\lambda_{\alpha})_{\alpha\in G})=\displaystyle{\bigcup_{\alpha\in G}^{\bullet} (\A, F, \lambda_{\alpha})}$ where: 
\begin{itemize}
\item $G=\Aut_0(F)$, 
\item $(\A, F, \lambda_e)$ is a bimonad in $\K$, 
\item for every $\alpha\in G$ the pair $(F, \lambda_{\alpha})$ is a 1-cell both in $\Mnd(K)$ and in $\Comnd(K)$ so that 
\begin{equation} \eqlabel{lambda alfa-beta}
\lambda_{\beta\alpha}=
\gbeg{3}{5}
\got{1}{F} \got{1}{F} \gnl
\gbmp{\beta} \gbmp{\s\alpha^{\w-1}} \gnl
\glmptb \gnot{\hspace{-0,34cm}\lambda} \grmptb \gnl
\gbmp{\alpha} \gbmp{\s\beta^{\w-1}} \gnl
\gob{1}{F} \gob{1}{F}
\gend
\end{equation}
\end{itemize}

We will use the following notation: $F\equiv(\A, F, (\lambda_{\alpha})_{\alpha\in G})$ and $F_\alpha\equiv(\A, F, \lambda_{\alpha}).$

\medskip

\underline{The transitive system of groups with projections:} $(\{G'\times G\s\vert\s F,F\s'\in\Bimnd(\K)\}, *, \pi)$ from above.  

\medskip

\underline{Hom-category.} Given two 0-cells $(\A, F, (\lambda_{\alpha})_{\alpha\in G})$ and $(\A', F\s', (\lambda_{\alpha}')_{\alpha\in G})$ 
we define the category $\Bimnd^T(F,F\s'):=\Bimnd^T(\K)(F,F\s')$ as follows:
\begin{equation} \eqlabel{hom cat bim Tur}
\Bimnd^T(F,F\s')=
 \left \{
      \begin{matrix} 
         \displaystyle{\bigcup_{(\alpha,\beta)\in G'\times G}^{\bullet} \Bimnd^T(F_\beta,F\s'_\alpha)},&  \quad\text{if}\hspace{0,1cm} G\iso G' \\
         \hspace{4,2cm}\emptyset, & \text{if not }
      \end{matrix}
    \right .
\end{equation} 
so, if the 0-automorphism groups $G$ and $G'$ are not isomorphic, we set $\Bimnd^T(F,F\s')$ to be the empty category (with no objects and no morphisms), 
if $ G\iso G'$, then let $\Bimnd^T(F,F\s')$ be the disjoint union of the categories $\Bimnd^T(F_\beta,F\s'_\alpha)$ that we next define. The objects of 
$\Bimnd^T(F_\beta,F\s'_\alpha)$ are triples $(X, \psi_X, \phi_X)$ so that: 
\begin{itemize}
\item $(X, \psi_X: F\s'X\to XF\s')$ is a 1-cell in $\Mnd(\K)$, 
\item $(X, \phi_X:XF\to F\s'X)$ is a 1-cell in $\Comnd(\K)$, and 
\item the compatibility condition 
\begin{equation}  \eqlabel{twisted YD cond}
\gbeg{3}{5}
\got{1}{F\s'} \got{1}{X} \got{1}{F} \gnl
\glmptb \gnot{\hspace{-0,34cm}\psi_X} \grmptb \gcl{1} \gnl
\gcl{1} \glmptb \gnot{\hspace{-0,34cm}\lambda_\beta} \grmptb \gnl
\glmptb \gnot{\hspace{-0,34cm}\phi_X} \grmptb \gcl{1} \gnl
\gob{1}{F\s'} \gob{1}{X} \gob{1}{F}
\gend=
\gbeg{3}{5}
\got{1}{F\s'} \got{1}{X} \got{1}{F} \gnl
\gcl{1} \glmptb \gnot{\hspace{-0,34cm}\phi_X} \grmptb \gnl
\glmptb \gnot{\hspace{-0,34cm}\lambda'_\alpha} \grmptb \gcl{1} \gnl
\gcl{1} \glmptb \gnot{\hspace{-0,34cm}\psi_X} \grmptb \gnl
\gob{1}{F\s'} \gob{1}{X} \gob{1}{F}
\gend
\end{equation} 
holds.
\end{itemize}

Morphisms of $\Bimnd^T(F_\beta,F\s'_\alpha)$ are $\zeta: (X, \psi_X, \phi_X)\to (Y, \psi_Y, \phi_Y)$ where 
$\zeta: (X, \psi_X)\to (Y, \psi_Y)$ is a 2-cell in $\Mnd(\K)$ and $\zeta: (X, \phi_X)\to (Y, \phi_Y)$ is a 2-cell in $\Comnd(\K)$. 

\medskip

For every 0-cell $(\A, F, (\lambda_{\alpha})_{\alpha\in G})$ of $\Bimnd^T(\K)$ the identity 1-cell on it is given by $(\Id_\A, \id_F, \id_F)$ 
living in the category $\Bimnd^T(F_e,F_e)$.  

\medskip

For every 1-cell $(X, \psi_X, \phi_X)$ in $\Bimnd^T(\K)$ the identity 2-cell on it is given by $\id_X$. 

\medskip

For the composition of 1-cells we will first have to prove a few results. We start by recording some direct consequences of the identity \equref{lambda alfa-beta}:
\begin{center} 
\begin{tabular}{p{3cm}p{0cm}p{3cm}p{0cm}p{3.4cm}p{0cm}p{3.4cm}}
\begin{equation} \eqlabel{l1}
\lambda_\alpha=
\gbeg{2}{5}
\got{1}{F} \got{1}{F} \gnl
\gcl{1} \gbmp{\s\alpha^{\w-1}} \gnl
\glmptb \gnot{\hspace{-0,34cm}\lambda} \grmptb \gnl
\gbmp{\alpha} \gcl{1} \gnl
\gob{1}{F} \gob{1}{F}
\gend
\end{equation} & & 
\begin{equation} \eqlabel{l2}
\lambda_\alpha=
\gbeg{2}{5}
\got{1}{F} \got{1}{F} \gnl
\gbmp{\alpha} \gcl{1} \gnl
\glmptb \gnot{\hspace{-0,34cm}\lambda} \grmptb \gnl
\gcl{1} \gbmp{\s\alpha^{\w-1}} \gnl
\gob{1}{F} \gob{1}{F}
\gend
\end{equation} & & 
\begin{equation} \eqlabel{l3}
\lambda_{\alpha\beta}=
\gbeg{2}{5}
\got{1}{F} \got{1}{F} \gnl
\gcl{1} \gbmp{\s\alpha^{\w-1}} \gnl
\glmptb \gnot{\hspace{-0,34cm}\lambda_\beta} \grmptb \gnl
\gbmp{\alpha} \gcl{1} \gnl
\gob{1}{F} \gob{1}{F}
\gend
\end{equation}  & &
\begin{equation} \eqlabel{l4}
\lambda_{\alpha\beta}=
\gbeg{2}{5}
\got{1}{F} \got{1}{F} \gnl
\gbmp{\beta} \gcl{1} \gnl
\glmptb \gnot{\hspace{-0,34cm}\lambda_\alpha} \grmptb \gnl
\gcl{1} \gbmp{\s\beta^{\w-1}} \gnl
\gob{1}{F} \gob{1}{F}
\gend
\end{equation}
\end{tabular}
\end{center}

\begin{center} 
\begin{tabular}{p{5cm}p{0cm}p{5cm}}  
\begin{equation} \eqlabel{l3'}
\gbeg{2}{4}
\got{1}{F} \got{1}{F} \gnl
\glmptb \gnot{\hspace{-0,34cm}\lambda_{\alpha\beta}} \grmptb \gnl
\gcl{1} \gbmp{\beta} \gnl
\gob{1}{F} \gob{1}{F}
\gend=
\gbeg{2}{4}
\got{1}{F} \got{1}{F} \gnl
\gbmp{\beta} \gcl{1} \gnl 
\glmptb \gnot{\hspace{-0,34cm}\lambda_\alpha} \grmptb \gnl
\gob{1}{F} \gob{1}{F}
\gend
\end{equation} & & 
\begin{equation*} 
\gbeg{2}{4}
\got{1}{F} \got{1}{F} \gnl
\gcl{1} \gbmp{\alpha} \gnl
\glmptb \gnot{\hspace{-0,34cm}\lambda_{\alpha\beta}} \grmptb \gnl
\gob{1}{F} \gob{1}{F}
\gend=
\gbeg{2}{4}
\got{1}{F} \got{1}{F} \gnl
\glmptb \gnot{\hspace{-0,34cm}\lambda_\beta} \grmptb \gnl
\gbmp{\alpha} \gcl{1} \gnl 
\gob{1}{F} \gob{1}{F}
\gend
\end{equation*}
\end{tabular}
\end{center}

\begin{center} 
\begin{tabular}{p{3.4cm}p{0cm}p{3.4cm}p{0cm}p{3.4cm}} 
\begin{equation} \eqlabel{l7}
\gbeg{2}{4}
\got{1}{F} \got{1}{F} \gnl
\gcl{1} \gbmp{\s\alpha^{\w-1}} \gnl
\glmptb \gnot{\hspace{-0,34cm}\lambda} \grmptb \gnl
\gob{1}{F} \gob{1}{F}
\gend=
\gbeg{2}{4}
\got{1}{F} \got{1}{F} \gnl
\glmptb \gnot{\hspace{-0,34cm}\lambda_\alpha} \grmptb \gnl
\gbmp{\s\alpha^{\w-1}} \gcl{1} \gnl 
\gob{1}{F} \gob{1}{F}
\gend
\end{equation} & &
\begin{equation} \eqlabel{l5}
\gbeg{2}{4}
\got{1}{F} \got{1}{F} \gnl
\gbmp{\alpha} \gcl{1} \gnl
\glmptb \gnot{\hspace{-0,34cm}\lambda} \grmptb \gnl
\gob{1}{F} \gob{1}{F}
\gend=
\gbeg{2}{4}
\got{1}{F} \got{1}{F} \gnl
\glmptb \gnot{\hspace{-0,34cm}\lambda_\alpha} \grmptb \gnl
\gcl{1} \gbmp{\alpha} \gnl
\gob{1}{F} \gob{1}{F}
\gend
\end{equation} & & 
\begin{equation} \eqlabel{l6}
\gbeg{2}{4}
\got{1}{F} \got{1}{F} \gnl
\gbmp{\s\alpha^{\w-1}} \gcl{1} \gnl
\glmptb \gnot{\hspace{-0,34cm}\lambda_\alpha} \grmptb \gnl
\gob{1}{F} \gob{1}{F}
\gend=
\gbeg{2}{4}
\got{1}{F} \got{1}{F} \gnl
\glmptb \gnot{\hspace{-0,34cm}\lambda} \grmptb \gnl
\gcl{1} \gbmp{\s\alpha^{\w-1}} \gnl
\gob{1}{F} \gob{1}{F}
\gend
\end{equation} 
\end{tabular}
\end{center}

\begin{equation} \eqlabel{l8}
\gbeg{3}{4}
\gvac{1} \got{1}{F} \got{1}{F} \gnl
\glmp \gnot{\hspace{-0,34cm}\alpha^{-1}\beta} \grmptb \gcl{1} \gnl
\gvac{1} \glmptb \gnot{\hspace{-0,34cm}\lambda_\alpha} \grmptb \gnl
\gvac{1} \gob{1}{F} \gob{1}{F}
\gend=
\gbeg{2}{4}
\got{1}{F} \got{1}{F} \gnl
\glmptb \gnot{\hspace{-0,34cm}\lambda_\beta} \grmptb \gnl
\gcl{1} \glmptb \gnot{\hspace{-0,34cm}\alpha^{-1}\beta} \grmp \gnl
\gob{1}{F} \gob{1}{F}
\gend
\end{equation}

\begin{lma} \lelabel{twisted psi's}
Given a 1-cell $(X, \psi_X, \phi_X): (\A, F, (\lambda_{\alpha})_{\alpha\in G}) \to (\A', F\s', (\lambda_{\alpha}')_{\alpha\in G'})$ in $\Bimnd^T(\K)$, 
where $G\iso G'$, and let $j: G\to G'$ be the corresponding fusion map. 
\begin{enumerate}[a)]
\item For every $\alpha\in G$ the pair $(X, \psi_X^\alpha)$ is a 1-cell in $\Mnd(\K)$, where $\psi_X^\alpha$ is given by \equref{G-twisted}.
\item For every $\beta\in G'$ the pair $(X, \psi_X^\beta)$ is a 1-cell in $\Mnd(\K)$, where $\psi_X^\beta$ is given by \equref{G'-twisted}. 
\begin{center} 
\begin{tabular}{p{5cm}p{2cm}p{5cm}}
\begin{equation} \eqlabel{G-twisted}
\psi_X^\alpha=
\gbeg{3}{5}
\gvac{1} \got{1}{F\s'} \got{1}{X} \gnl
\glmp \gnot{\hspace{-0,34cm}j\s(\alpha)} \grmptb \gcl{1} \gnl
\gvac{1} \glmptb \gnot{\hspace{-0,34cm}\psi_X} \grmptb \gnl
\gvac{1} \gcl{1} \gbmp{\s\alpha^{\w-1}}  \gnl
\gvac{1} \gob{1}{X} \gob{1}{F}
\gend
\end{equation} & & 
\begin{equation} \eqlabel{G'-twisted}
\psi_X^\beta=
\gbeg{3}{5}
\got{1}{F\s'} \got{1}{X} \gnl
\gbmp{\beta} \gcl{1} \gnl
\glmptb \gnot{\hspace{-0,34cm}\psi_X} \grmptb \gnl
\gcl{1} \glmptb \gnot{j^{-1}(\beta^{-1})} \gcmp \grmp \gnl
\gob{1}{X} \gob{1}{F}
\gend
\end{equation}
\end{tabular}
\end{center}
\end{enumerate}
\end{lma}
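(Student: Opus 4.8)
The plan is to check directly that the twisted 2-cell $\psi_X^\alpha$ satisfies the two defining identities \equref{monadic d.l.} of a $1$-cell in $\Mnd(\K)$, exploiting only that $\alpha$ and $j(\alpha)$ are $0$-automorphisms. Reading the string diagram \equref{G-twisted} from top to bottom, $\psi_X^\alpha$ is the vertical composite obtained from $\psi_X$ by pre-composing with $j(\alpha)$ on the incoming $F\s'$-leg and post-composing with $\alpha^{-1}$ on the outgoing $F$-leg. The structural facts I will use are: first, $\alpha\in G=\Aut_0(F)$ means that $\alpha$ — and hence $\alpha^{-1}$, by the remark following \equref{alg m} — is compatible with $\mu$ and $\eta$ as in \equref{alg m} for $F$; and second, since $j:G\to G'$ is a group isomorphism onto $G'=\Aut_0(F\s')$, the element $j(\alpha)$ is a genuine $0$-automorphism of $F\s'$ and therefore satisfies \equref{alg m} for $F\s'$.

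For the multiplicativity identity (the left square of \equref{monadic d.l.}) I substitute the decomposition of $\psi_X^\alpha$ into both sides. On the left-hand side $\psi_X$ occurs twice, producing two copies of $j(\alpha)$ sitting on the two input $F\s'$-strands and two copies of $\alpha^{-1}$ sitting on the two $F$-strands that feed the multiplication $\mu$; using the first identity of \equref{alg m} for $F$ I merge the two $\alpha^{-1}$ through $\mu$ into a single $\alpha^{-1}$ placed on the output. On the right-hand side a single $j(\alpha)$ sits on the output of $\mu\s'$; using the first identity of \equref{alg m} for $F\s'$ I split it through $\mu\s'$ into two copies of $j(\alpha)$ on the two input $F\s'$-strands. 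After these two moves both sides carry the same outer decorations — two $j(\alpha)$ on the top $F\s'$-legs and one $\alpha^{-1}$ on the bottom $F$-leg — and the inner diagrams are exactly the two sides of the multiplicativity identity \equref{monadic d.l.} for the untwisted $\psi_X$, which coincide because $(X,\psi_X)$ is a $1$-cell in $\Mnd(\K)$. Cancelling the common outer decorations gives the identity for $\psi_X^\alpha$.

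The unitality identity (the right square of \equref{monadic d.l.}) is shorter: the unit $\eta\s'$ entering the $F\s'$-leg meets $j(\alpha)$, and by the second identity of \equref{alg m} for $F\s'$ one has $j(\alpha)\comp\eta\s'=\eta\s'$, so $j(\alpha)$ disappears; the unitality identity for $\psi_X$ then replaces the remaining diagram by a single $\eta$ on the $F$-leg, and finally $\alpha^{-1}\comp\eta=\eta$ (the second identity of \equref{alg m} for $F$) removes the $\alpha^{-1}$, leaving exactly the right-hand side. This proves a).

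Part b) follows from the identical argument with the two groups interchanged: in \equref{G'-twisted} the incoming $F\s'$-leg carries $\beta\in G'=\Aut_0(F\s')$ and the outgoing $F$-leg carries $j^{-1}(\beta^{-1})\in G=\Aut_0(F)$, both of which are again $0$-automorphisms satisfying \equref{alg m} for $F\s'$ and for $F$ respectively, so the same two sliding moves reduce the two defining identities for $\psi_X^\beta$ to those for $\psi_X$. The computation is routine; the only points requiring care are the bookkeeping of which automorphism decorates which leg and the observation — essential for both parts — that the fusion map $j$ carries $0$-automorphisms of $F$ to $0$-automorphisms of $F\s'$. Note that no part of the comonad or bimonad data enters here (in particular none of \equref{lambda alfa-beta}--\equref{l8}), since only the monadic axioms are being verified.
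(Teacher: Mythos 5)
Your proof is correct and follows exactly the route the paper takes: the author's proof consists of the single remark that the verification ``is direct using the fact that $j(\alpha), \alpha^{-1}, \beta$ and $j^{-1}(\beta^{-1})$ fulfill the identities \equref{alg m}'', and your two sliding moves (merging the two copies of $\alpha^{-1}$ through $\mu$, splitting $j(\alpha)$ through $\mu\s'$, and absorbing the units) are precisely the details being alluded to. Nothing is missing.
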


\begin{proof}
The proof is direct using the fact that $j(\alpha), \alpha^{\w-1}, \beta$ and $j^{-1}(\beta^{-1})$ fulfill the identities \equref{alg m}. 
\qed\end{proof}

\begin{rem}
Observe that the above result holds for a 1-cell $(X, \psi_X): (\A, B)\to(\A', B')$ in $\Mnd(\K)$, where $B$ and $B'$ are monads in $\K$ with the corresponding 0-zero automorphism groups 
$G$ and $G'$, which are isomorphic and consist of 1-cells $(\Id_\A, \alpha): (\A, B)\to(\A', B')$ in $\Mnd(\K)$, and $j: G\to G'$ is any group isomorphism. 
\end{rem}

The composition of 1-cells in $\Bimnd^T(\K)$ is defined as it is indicated in the following Proposition.

\begin{prop} \prlabel{comp 1-cells}
Let $(X, \psi_X, \phi_X) \in \Bimnd^T(F\s'_\beta, F\s''_\alpha)$ and $(Y, \psi_Y, \phi_Y)\in \Bimnd^T(F_\delta,F\s'_\gamma)$ be composable 1-cells in $\Bimnd^T(\K)$, 
and let $j:G\to G'$ and $j\s': G'\to G''$ be the corresponding fusion maps. 
The triple $(XY, \psi_{XY}, \phi_{XY})$, where $\psi_{XY}$ and $\phi_{XY}$ 
are given as below, is a 1-cell in $\Bimnd^T(\K)$, in particular it is an object in 
$\Bimnd^T(F_{\delta\cdot j^{-1}(\gamma^{-1}\beta\gamma)},F\s''_{\alpha\cdot j\s'(\gamma)})$.
\begin{center} 
\begin{tabular}{p{6cm}p{0cm}p{5cm}} 
\begin{equation} \eqlabel{psi_YX}
\psi_{XY}=
\gbeg{3}{4}
\got{1}{F\s''} \got{1}{X} \got{1}{Y} \gnl
\glmptb \gnot{\hspace{-0,34cm}\psi_X^\gamma} \grmptb \gcl{1} \gnl
\gcl{1} \glmptb \gnot{\psi_Y^{ \gamma^{-1}\beta\gamma}} \gcmptb \grmp \gnl 
\gob{1}{X} \gob{1}{Y} \gob{1}{F}
\gend
\end{equation} & & 
\begin{equation} \eqlabel{phi_YX}
\phi_{XY}=
\gbeg{3}{4}
\got{1}{X} \got{1}{Y} \got{1}{F} \gnl
\gcl{1} \glmptb \gnot{\hspace{-0,34cm}\phi_Y} \grmptb \gnl
\glmptb \gnot{\hspace{-0,34cm}\phi_X} \grmptb \gcl{1} \gnl
\gob{1}{F\s''} \gob{1}{X} \gob{1}{Y} 
\gend
\end{equation} 
\end{tabular}
\end{center}
\end{prop}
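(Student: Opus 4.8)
The plan is to check the three defining conditions that make $(XY,\psi_{XY},\phi_{XY})$ an object of $\Bimnd^T(F_{\delta\cdot j^{-1}(\gamma^{-1}\beta\gamma)},F\s''_{\alpha\cdot j\s'(\gamma)})$: that $(XY,\psi_{XY})$ is a $1$-cell in $\Mnd(\K)$, that $(XY,\phi_{XY})$ is a $1$-cell in $\Comnd(\K)$, and that the twisted Yetter--Drinfel'd compatibility \equref{twisted YD cond} holds for $XY$ with source distributive law $\lambda_{\delta\cdot j^{-1}(\gamma^{-1}\beta\gamma)}$ on $F$ and target distributive law $\lambda''_{\alpha\cdot j\s'(\gamma)}$ on $F\s''$.

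For the monad condition I would first observe that $\psi_{XY}$ in \equref{psi_YX} is precisely the composite in $\Mnd(\K)$ of the two $1$-cells $(X,\psi_X^\gamma)$ and $(Y,\psi_Y^{\gamma^{-1}\beta\gamma})$: reading the diagram from the top, $\psi_X^\gamma$ slides $F\s''$ past $X$ and then $\psi_Y^{\gamma^{-1}\beta\gamma}$ slides it past $Y$, which is exactly the composition rule for distributive laws. Now \leref{twisted psi's}(a), applied to $X$ with the source-group twist $\gamma\in G\s'$, shows that $(X,\psi_X^\gamma)$ is a $1$-cell in $\Mnd(\K)$, and \leref{twisted psi's}(b), applied to $Y$ with the target-group twist $\gamma^{-1}\beta\gamma\in G\s'$, shows the same for $(Y,\psi_Y^{\gamma^{-1}\beta\gamma})$; since $\Mnd(\K)$ is a $2$-category their composite is again a $1$-cell, so $(XY,\psi_{XY})$ is a $1$-cell in $\Mnd(\K)$. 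Dually, $\phi_{XY}$ in \equref{phi_YX} is simply the composite in $\Comnd(\K)$ of the \emph{untwisted} comonadic $1$-cells $(Y,\phi_Y)$ and $(X,\phi_X)$, so $(XY,\phi_{XY})$ is a $1$-cell in $\Comnd(\K)$. The asymmetry that $\psi$ is twisted while $\phi$ is not is exactly what lets the index bookkeeping in the last step close up.

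The core of the proof is the twisted Yetter--Drinfel'd identity \equref{twisted YD cond} for $XY$, which I would establish by a string-diagram computation. After substituting \equref{psi_YX} and \equref{phi_YX}, the strategy is to transport the central distributive law through the strands $Y$ and $X$ one at a time, applying the twisted Yetter--Drinfel'd condition that each of $Y$ and $X$ already satisfies as an object of $\Bimnd^T(\K)$: for $Y$ it relates $\psi_Y,\phi_Y$ to $\lambda_\delta$ on the source $F$ and $\lambda'_\gamma$ on the target $F\s'$, and for $X$ it relates $\psi_X,\phi_X$ to $\lambda'_\beta$ on $F\s'$ and $\lambda''_\alpha$ on $F\s''$. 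Before each local move can be applied, the auxiliary twisting $1$-cells --- $j\s'(\gamma)$ and $\gamma^{-1}$ inside $\psi_X^\gamma$, and $\gamma^{-1}\beta\gamma$ and $j^{-1}((\gamma^{-1}\beta\gamma)^{-1})$ inside $\psi_Y^{\gamma^{-1}\beta\gamma}$ --- must be slid past the distributive laws; this is carried out with the sliding identities \equref{l1}--\equref{l8} and the defining relation \equref{lambda alfa-beta}, which trade a twisted index $\lambda_\alpha$ for $\lambda$ decorated by $\alpha^{\pm1}$ on a single leg and back.

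The step I expect to be the genuine obstacle is not the diagrammatic manipulation --- which is essentially forced once the Yetter--Drinfel'd moves of $X$ and $Y$ are in place --- but the group-theoretic reconciliation of the indices. One must verify that, after all the twisting $1$-cells have been absorbed into the $\lambda$'s, the source leg of $F$ carries exactly $\lambda_{\delta\cdot j^{-1}(\gamma^{-1}\beta\gamma)}$ and the target leg of $F\s''$ carries exactly $\lambda''_{\alpha\cdot j\s'(\gamma)}$, i.e.\ the two indices predicted by the transitive product \equref{trans product}, $(\alpha,\beta)*(\gamma,\delta)=(\alpha\, j\s'(\gamma),\,\delta\cdot j^{-1}(\gamma^{-1}\beta\gamma))$. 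The conjugation $\gamma^{-1}\beta\gamma$ and the fusion maps $j,j\s'$ are engineered precisely so that the accumulated twists collapse to these values; at the level of groups this reconciliation is the shadow of the compatibility \equref{conjug compat} between the ambient products and $*$, so that checking \equref{twisted YD cond} amounts to a diagrammatic lift of that identity. Keeping track of where each of $\alpha,\beta,\gamma,\delta$ and each image $j(\cdot),j\s'(\cdot)$ lands as the distributive laws travel through the diagram is where all the care is required.
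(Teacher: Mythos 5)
Your proposal is correct and follows essentially the same route as the paper: the monadic part via \leref{twisted psi's} applied to $(X,\psi_X^\gamma)$ (source twist, \equref{G-twisted}) and $(Y,\psi_Y^{\gamma^{-1}\beta\gamma})$ (target twist, \equref{G'-twisted}), the comonadic part by direct composition, and then the string-diagram verification of \equref{twisted YD cond} by pushing the distributive law through $Y$ and then $X$ using their own twisted Yetter--Drinfel'd conditions together with \equref{l1}--\equref{l8} (the paper uses \equref{l4} and \equref{l8} specifically), with the indices collapsing to $\delta\, j^{-1}(\gamma^{-1}\beta\gamma)$ and $\alpha\, j\s'(\gamma)$ exactly as the transitive product \equref{trans product} predicts. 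The only difference is that you describe the diagrammatic computation rather than carrying it out, but the plan is the paper's proof.
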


\begin{proof}
Note that by \leref{twisted psi's} the pair $(XY, \psi_{XY})$ is a composition of 1-cells in $\Mnd(\K)$, so it is a 1-cell in $\Mnd(\K)$. We clearly have that 
$(XY, \phi_{XY})$ is a 1-cell in $\Comnd(\K)$. We only should check that the identity \equref{twisted YD cond} 
is fulfilled. We first observe that $\alpha\in G'',\beta,\gamma\in G',\delta\in G$, hence $\gamma, \gamma^{-1}\beta\gamma\in G'$. Then 
$\psi_X^\gamma$ is defined as in \equref{G-twisted}, while $\psi_Y^{\gamma^{-1}\beta\gamma}$ is defined as in \equref{G'-twisted}. Henceforth we have: 
\begin{equation} \eqlabel{psi_XY real}
\psi_{XY}=
\gbeg{9}{8}
\gvac{1} \got{1}{F\s''} \got{1}{X} \got{5}{Y} \gnl
\glmp \gnot{\hspace{-0,34cm}j\s'(\gamma)} \grmptb \gcl{1} \gvac{2} \gcl{3} \gnl
\gvac{1} \glmptb \gnot{\hspace{-0,34cm}\psi_X} \grmptb \gnl
\gvac{1} \gcl{4} \glmpt \gnot{\hspace{-0,34cm} \cancel{ \gamma^{\w-1} } } \grmpb \gnl
\gvac{2} \glmp \gnot{\gamma^{-1}\beta \cancel{ \gamma} } \gcmpt \grmpb \gcl{1} \gnl
\gvac{4} \glmptb \gnot{\hspace{-0,34cm}\psi_Y} \grmptb \gnl
\gvac{4} \gcl{1} \glmpt \gnot{ \hspace{0,4cm} j^{-1}(\gamma^{-1}\beta^{-1}\gamma)} \gcmpb \gcmp \grmp \gnl
\gob{3}{X} \gob{3}{Y} \gob{1}{F}
\gend=
\gbeg{9}{7}
\gvac{1} \got{1}{F\s''} \got{1}{X} \got{3}{Y} \gnl
\glmp \gnot{\hspace{-0,34cm}j\s'(\gamma)} \grmptb \gcl{1} \gvac{1} \gcl{3} \gnl
\gvac{1} \glmptb \gnot{\hspace{-0,34cm}\psi_X} \grmptb \gnl
\gvac{1} \gcl{3} \glmpt \gnot{\hspace{-0,34cm}\gamma^{\w-1}\beta} \grmpb \gnl
\gvac{3} \glmptb \gnot{\hspace{-0,34cm}\psi_Y} \grmptb \gnl
\gvac{3} \gcl{1} \glmpt \gnot{ \hspace{0,4cm} j^{-1}(\gamma^{-1}\beta^{-1}\gamma)} \gcmpb \gcmp \grmp \gnl
\gob{3}{X} \gob{1}{Y} \gob{3}{F.}
\gend
\end{equation}
Then the left hand-side of \equref{twisted YD cond} in this case and applying \equref{l4} becomes: 
$$
\gbeg{5}{5}
\got{1}{F\s''} \got{1}{XY} \got{1}{F} \gnl
\glmptb \gnot{\hspace{-0,34cm}\psi_{XY}} \grmptb \gcl{1} \gnl
\gcl{1} \glmptb \gnot{\hspace{0,42cm}\lambda_{\delta j^{-1}(\gamma^{-1}\beta\gamma)}} \gcmptb \gcmp \grmp \gnl
\glmptb \gnot{\hspace{-0,34cm}\phi_{XY}} \grmptb \gcl{1} \gnl
\gob{1}{F\s''} \gob{1}{XY} \gob{1}{F}
\gend=
\gbeg{12}{12}
\gvac{1} \got{1}{F\s''} \got{1}{X} \got{3}{Y} \got{7}{F} \gnl
\glmp \gnot{\hspace{-0,34cm}j\s'(\gamma)} \grmptb \gcl{1} \gvac{1} \gcl{3} \gvac{4} \gcl{6} \gnl
\gvac{1} \glmptb \gnot{\hspace{-0,34cm}\psi_X} \grmptb \gnl
\gvac{1} \gcl{3} \glmpt \gnot{\hspace{-0,34cm}\gamma^{\w-1}\beta} \grmpb \gnl
\gvac{3} \glmptb \gnot{\hspace{-0,34cm}\psi_Y} \grmptb \gnl
\gvac{3} \gcl{1} \glmpt \gnot{ \hspace{0,4cm} \cancel{ j^{-1}(\gamma^{-1}\beta^{-1}\gamma)} } \gcmpb \gcmp \grmp \gnl
\gvac{1} \gcn{2}{4}{1}{11} \gcn{2}{3}{1}{9} \glmpt \gnot{ \hspace{0,4cm} \cancel{ j^{-1}(\gamma^{-1}\beta\gamma)} } \gcmp \gcmp \grmpb \gnl
\gvac{8} \glmptb \gnot{ \hspace{-0,34cm} \lambda_\delta} \grmptb \gnl
\gvac{8} \gcl{1} \glmptb \gnot{ \hspace{0,4cm} j^{-1}(\gamma^{-1}\beta^{-1}\gamma)} \gcmp \gcmp \grmp \gnl
\gvac{7} \glmptb \gnot{\hspace{-0,34cm}\phi_Y} \grmptb \gcl{2} \gnl
\gvac{6} \glmptb \gnot{\hspace{-0,34cm}\phi_X} \grmptb \gcl{1} \gnl
\gvac{6} \gob{1}{F\s''} \gob{1}{X} \gob{1}{Y} \gob{1}{F}
\gend\stackrel{Y}{=}
\gbeg{10}{9}
\gvac{1} \got{1}{F\s''} \got{1}{X} \gvac{1} \got{1}{Y} \got{1}{F} \gnl
\glmp \gnot{\hspace{-0,34cm}j\s'(\gamma)} \grmptb \gcl{1} \gvac{1} \gcl{3} \gcl{3} \gnl
\gvac{1} \glmptb \gnot{\hspace{-0,34cm}\psi_X} \grmptb \gnl
\gvac{1} \gcl{2} \glmpt \gnot{\hspace{-0,34cm}\gamma^{\w-1}\beta} \grmpb \gnl
\gvac{3} \gcl{1} \glmptb \gnot{\hspace{-0,34cm}\phi_Y} \grmptb \gnl
\gvac{1} \gcn{2}{2}{1}{3} \glmptb \gnot{\hspace{-0,34cm}\lambda'_\gamma} \grmptb \gcl{1} \gnl
\gvac{2} \gvac{1} \gcl{1} \glmptb \gnot{\hspace{-0,34cm}\psi_Y} \grmptb \gnl
\gvac{2} \glmptb \gnot{\hspace{-0,34cm}\phi_X} \grmptb \gcl{1} \glmptb \gnot{ \hspace{0,4cm} j^{-1}(\gamma^{-1}\beta^{-1}\gamma)} \gcmp \gcmp \grmp \gnl
\gvac{2} \gob{1}{F\s''} \gob{1}{X} \gob{1}{Y} \gob{1}{F}
\gend=
$$

$$
\stackrel{\equref{l8}}{=}
\gbeg{9}{8}
\gvac{1} \got{1}{F\s''} \got{1}{X} \got{1}{Y} \got{1}{F} \gnl
\glmp \gnot{\hspace{-0,34cm}j\s'(\gamma)} \grmptb \gcl{1} \gcl{1} \gcl{1} \gnl
\gvac{1} \glmptb \gnot{\hspace{-0,34cm}\psi_X} \grmptb \glmptb \gnot{\hspace{-0,34cm}\phi_Y} \grmptb \gnl
\gvac{1} \gcl{1} \glmptb \gnot{\hspace{-0,34cm}\lambda'_\beta} \grmptb \gcn{2}{2}{1}{3} \gnl
\gvac{1} \glmptb \gnot{\hspace{-0,34cm}\phi_X} \grmptb\glmpt \gnot{\hspace{-0,34cm}\gamma^{-1}\beta} \grmpb \gnl
\gvac{1} \gcl{2} \gcl{2} \gvac{1} \glmptb \gnot{\hspace{-0,34cm}\psi_Y} \grmptb \gnl
\gvac{4}  \gcl{1} \glmptb \gnot{ \hspace{0,4cm} j^{-1}(\gamma^{-1}\beta^{-1}\gamma)} \gcmp \gcmp \grmp \gnl
\gvac{1} \gob{1}{F\s''} \gob{1}{X} \gvac{1} \gob{1}{Y} \gob{1}{F}
\gend\stackrel{X}{=}
\gbeg{8}{9}
\gvac{1} \got{1}{F\s''} \got{1}{X} \got{1}{Y} \got{1}{F} \gnl
\glmp \gnot{\hspace{-0,34cm}j\s'(\gamma)} \grmptb \gcl{1} \glmptb \gnot{\hspace{-0,34cm}\phi_Y} \grmptb \gnl
\gvac{1} \gcl{1} \glmptb \gnot{\hspace{-0,34cm}\phi_X} \grmptb \gcl{1} \gnl
\gvac{1} \glmptb \gnot{\hspace{-0,34cm}\lambda''_\alpha } \grmptb \gcl{1} \gcn{2}{2}{1}{3} \gnl
\gvac{1} \gcl{2} \glmptb \gnot{\hspace{-0,34cm}\psi_X} \grmptb \gnl
\gvac{2} \gcl{1} \glmpt \gnot{\hspace{-0,34cm}\gamma^{-1}\beta} \grmpb \gcl{1} \gnl
\gvac{1} \gcl{2} \gcl{2} \gvac{1} \glmptb \gnot{\hspace{-0,34cm}\psi_Y} \grmptb \gnl
\gvac{4}  \gcl{1} \glmptb \gnot{ \hspace{0,4cm} j^{-1}(\gamma^{-1}\beta^{-1}\gamma)} \gcmp \gcmp \grmp \gnl
\gvac{1} \gob{1}{F\s''} \gob{1}{X} \gvac{1} \gob{1}{Y} \gob{1}{F}
\gend=
\gbeg{8}{11}
\gvac{1} \got{1}{F\s''} \got{1}{X} \got{1}{Y} \got{1}{F} \gnl
\gvac{1} \gcl{1} \gcl{1} \glmptb \gnot{\hspace{-0,34cm}\phi_Y} \grmptb \gnl
\glmp \gnot{\hspace{-0,34cm}j\s'(\gamma)} \grmptb \glmptb \gnot{\hspace{-0,34cm}\phi_X} \grmptb \gcl{1} \gnl
\gvac{1} \glmptb \gnot{\hspace{-0,34cm}\lambda''_\alpha } \grmptb \gcn{1}{1}{1}{3} \gcn{2}{2}{1}{5} \gnl
\gvac{1} \gcl{6} \glmptb \gnot{\hspace{-0,34cm} \cancel{ j\s'(\gamma^{-1})} } \grmp \gcl{2} \gnl
\gvac{2} \glmpt \gnot{\hspace{-0,34cm} \cancel{ j\s'(\gamma\w)} } \grmpb \gvac{2} \gcl{2} \gnl
\gvac{3} \glmptb \gnot{\hspace{-0,34cm}\psi_X} \grmptb \gnl
\gvac{3} \gcl{3} \glmpt \gnot{\hspace{-0,34cm}\gamma^{-1}\beta} \grmpb \gcl{1} \gnl
\gvac{5} \glmptb \gnot{\hspace{-0,34cm}\psi_Y} \grmptb \gnl
\gvac{5}  \gcl{1} \glmptb \gnot{ \hspace{0,4cm} j^{-1}(\gamma^{-1}\beta^{-1}\gamma)} \gcmp \gcmp \grmp \gnl
\gvac{1} \gob{1}{F\s''} \gvac{1} \gob{1}{X} \gvac{1} \gob{1}{Y} \gob{1}{F}
\gend\stackrel{\equref{l4}}{=}
\gbeg{3}{5}
\gvac{1} \got{1}{F\s''} \got{1}{X} \got{1}{F} \gnl
\gvac{1} \gcl{1} \glmptb \gnot{\hspace{-0,34cm}\phi_{XY}} \grmptb \gnl
\glmp \gnot{\lambda''_{\alpha j\s'(\gamma)}} \gcmptb \grmptb \gcl{1} \gnl
\gvac{1} \gcl{1} \glmptb \gnot{\hspace{-0,34cm}\psi_{XY}} \grmptb \gnl
\gvac{1} \gob{1}{F\s'} \gob{1}{X} \gob{1}{F.}
\gend
$$
\qed\end{proof}

\pagebreak

The above Proposition yields a functor for three bimonads $F, F\s', F\s''$ in $\K$ with isomorphic corresponding 0-automorphism groups $G,G', G''$: 
$$ \hspace{-1cm} c^{(\alpha, \beta),(\gamma, \delta)}_{F\s'', F\s', F}: \quad
\Bimnd^T(F\s'_\beta, F\s''_\alpha) \times \Bimnd^T(F_\delta,F\s'_\gamma) \to \Bimnd^T(F_{\delta\cdot j^{-1}(\gamma^{-1}\beta\gamma)},F\s''_{\alpha\cdot j\s'(\gamma)})$$
$$ \quad (X, \psi_X, \phi_X), \hspace{1cm} (Y, \psi_Y, \phi_Y) \hspace{0,5cm} \mapsto \hspace{1cm} (XY, \psi_{XY}, \phi_{XY})$$
where $\psi_{XY}$ and $\phi_{XY}$ are given by \equref{psi_YX}, \equref{phi_YX}, and $j:G\to G'$ and $j\s': G'\to G''$ are fusion maps. This functor is clearly well-defined on morphisms. 
Mind the notation: as indicated in \equref{hom cat bim Tur}, the $(\alpha, \beta)$-component of the category $\Bimnd^T(F, F\s')$ we write as $\Bimnd^T(F_\beta,F\s'_\alpha)$. 
Then the component of the codomain of the functor $c^{(\alpha, \beta),(\gamma, \delta)}_{F\s'', F\s', F}$ is the transitive product \equref{trans product} of $(\alpha, \beta)\in G''\times G'$ 
and $(\gamma, \delta)\in G'\times G$.

\begin{lma}
Let $(X, \psi_X, \phi_X) \in \Bimnd^T(F\s''_\beta, F\s'''_\alpha), (Y, \psi_Y, \phi_Y)\in \Bimnd^T(F\s'_\delta, F\s''_\gamma)$ and 
$(Z, \psi_Z, \phi_Z)\in \Bimnd^T(F_\nu, F\s'_\mu)$ be composable 1-cells in $\Bimnd^T(\K)$. Then the corresponding objects $(XY)Z$ and $X(YZ)$ are 
equal in $\Bimnd^T(F_{\nu j^{-1}\left(\mu^{-1}\delta j\s'^{-1}(\gamma^{-1}\beta\gamma)\mu\right)}, F\s'''_{\alpha j\s''(\gamma) j\s''j\s'(\mu)})$.  
(Here \\ 
$(\alpha j\s''(\gamma) j\s''j\s'(\mu), \nu j^{-1}\left(\mu^{-1}\delta j\s'^{-1}(\gamma^{-1}\beta\gamma)\mu\right))=
(\alpha, \beta)*(\gamma, \delta)*(\mu,\nu)$ is the transitive product as in \equref{triple trans prod} with fusion maps 
$j:G\to G', j\s': G'\to G''$ and $j\s'':G''\to G'''$.)  
\end{lma}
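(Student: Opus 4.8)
The plan is to show that the two triples $(XY)Z$ and $X(YZ)$ agree entry by entry: they have the same underlying $1$-cell, lie in the same component of the Hom-category, and carry the same monad- and comonad-distributive laws. The first two points are immediate. The underlying $1$-cell of either bracketing is the horizontal composite $XYZ$ in $\K$, and horizontal composition of $1$-cells in $\K$ is strictly associative, so the two underlying $1$-cells coincide. That both triples lie in the single component $\Bimnd^T(F_{\ldots}, F\s'''_{\ldots})$ named in the statement is precisely the associativity of the transitive product already recorded in \equref{triple trans prod}. Hence only the equalities $\phi_{(XY)Z}=\phi_{X(YZ)}$ and $\psi_{(XY)Z}=\psi_{X(YZ)}$ remain to be proved.

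The comonadic part is the easy one. By \equref{phi_YX} the comonad-distributive law of a composite is obtained by vertically stacking the distributive laws of the factors, with no group-element decoration involved. Thus both $\phi_{(XY)Z}$ and $\phi_{X(YZ)}$ unfold to the same vertical stack of $\phi_X$, $\phi_Y$ and $\phi_Z$, and their equality follows from the strict associativity of vertical composition of $2$-cells in $\K$; equivalently, from the fact, noted in the proof of \prref{comp 1-cells}, that $(XY,\phi_{XY})$ is the composite $1$-cell in $\Comnd(\K)$, where composition is associative.

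The monadic part carries all the content. First I would fully expand $\psi_{(XY)Z}$ and $\psi_{X(YZ)}$ by repeatedly applying \equref{psi_YX} together with the twisting formulas \equref{G-twisted} and \equref{G'-twisted} of \leref{twisted psi's}, exactly as was done for two factors in the proof of \prref{comp 1-cells} (see \equref{psi_XY real}). After this expansion both sides become stacks of the three boxes $\psi_X,\psi_Y,\psi_Z$ decorated by the inserted $0$-automorphisms and by their images under the fusion maps $j$, $j\s'$, $j\s''$. Stripping the decorations, the remaining skeleton is the plain triple composite of distributive laws, which is bracketing-independent because composition of $1$-cells in $\Mnd(\K)$ is associative; so it suffices to check that the \emph{decorations} coincide. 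I would do this by sliding the inserted automorphisms along the strands, using the relation \equref{l8}, the algebra-morphism identities \equref{alg m} (which let the automorphisms pass through the multiplication and unit), and the naturality of $\psi_X,\psi_Y,\psi_Z$, and then collecting the resulting group elements with the homomorphism property of the fusion maps, for instance $j\s''(\gamma)\,j\s''(j\s'(\mu))=j\s''(\gamma\, j\s'(\mu))$ and $j\s^{13}=j\s''j\s'$. This collection is literally the computation that produced the two sides of \equref{triple trans prod}, now carried out on the string-diagram decorations rather than on the group elements alone, so the two decorated diagrams become identical.

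The hard part will be exactly this bookkeeping of twists. In the bracketing $(XY)Z$ the factor $\psi_X$ is first twisted while forming $\psi_{XY}$ and then twisted again while composing with $Z$, whereas in $X(YZ)$ it is twisted only once, and symmetrically for $\psi_Z$; one must verify that the net conjugations and fusion-map images deposited on each of $\psi_X$, $\psi_Y$, $\psi_Z$ are the same in the two orders. This is where associativity of the group multiplication and functoriality of the fusion maps enter, and it is the only genuinely computational point, being a once-iterated version of the manipulation in \prref{comp 1-cells} using \equref{l4} and \equref{l8}. Once it is settled, the two triples are equal, as claimed.
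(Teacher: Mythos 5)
Your proposal is correct and follows essentially the same route as the paper: the paper likewise reduces everything to comparing $\psi_{(XY)Z}$ with $\psi_{X(YZ)}$ (the $\phi$-part and the component being immediate), expands both sides by applying \equref{psi_XY real} twice, and matches the resulting group-element decorations by cancelling adjacent inverses and using the homomorphism property of the fusion maps, which is exactly the bookkeeping you describe.
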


\begin{proof}
As in the proof of the above Proposition, we only have to compare $\psi_{(XY)Z}$ and $\psi_{X(YZ)}$. Set $j\s^{13}=j\s''j\s'$ and $j\s^{02}=j\s'j$ as before, we find: 
$$\hspace{-1,6cm}
\psi_{(XY)Z}\stackrel{\equref{psi_XY real}}{\stackrel{XY,Z}{=}}
\gbeg{12}{8}
\gvac{1} \got{1}{F\s'''} \got{1}{XY} \got{7}{Z} \gnl
\glmp \gnot{\hspace{-0,34cm}j\s^{13}(\mu)} \grmptb \gcl{1} \gvac{3} \gcl{4} \gnl
\gvac{1} \glmptb \gnot{\hspace{-0,34cm}\psi_{XY}} \grmptb \gnl
\gvac{1} \gcl{4} \glmpt \gnot{\hspace{0,34cm}j\s'^{-1}(\gamma^{-1}\beta\gamma)} \gcmp \gcmp \grmpb \gnl
\gvac{4} \glmp \gnot{\hspace{-0,34cm}\mu^{-1}\delta} \grmptb \gnl
\gvac{5} \glmptb \gnot{\hspace{-0,34cm}\psi_Z} \grmptb \gnl
\gvac{5} \gcl{1} \glmpt \gnot{ \hspace{2,2cm} j^{-1}(\mu^{-1}j\s'^{-1}(\gamma^{-1}\beta^{-1}\gamma)\delta^{-1}\mu)} \gcmpb \gcmp \gcmp \gcmp \gcmp \gcmp \grmp \gnl
\gob{3}{XY} \gvac{2} \gob{1}{Z} \gob{3}{F}
\gend\stackrel{\equref{psi_XY real}}{\stackrel{X,Y}{=}}
\gbeg{9}{12}
\gvac{1} \got{1}{F\s'''} \got{1}{X} \got{3}{Y} \got{5}{Z} \gnl
\glmp \gnot{\hspace{-0,34cm}j\s^{13}(\mu)} \grmptb \gcl{2} \gvac{1} \gcl{4} \gvac{3} \gcl{8} \gnl
\glmp \gnot{\hspace{-0,34cm}j\s''(\gamma)} \grmptb \gcl{1} \gvac{1} \gcl{3} \gnl
\gvac{1} \glmptb \gnot{\hspace{-0,34cm}\psi_X} \grmptb \gnl
\gvac{1} \gcl{7} \glmpt \gnot{\hspace{-0,34cm} \gamma^{\w-1}\beta} \grmpb \gnl
\gvac{3} \glmptb \gnot{\hspace{-0,34cm} \psi_Y} \grmptb \gnl
\gvac{3} \gcl{5} \glmpt \gnot{ \hspace{0,4cm} \cancel{ j\s'^{-1}\w(\gamma^{-1}\beta^{-1}\gamma)} } \gcmpb \gcmp \grmp \gnl
\gvac{4} \glmpt \gnot{\hspace{0,34cm} \cancel{ j\s'^{-1}(\gamma^{-1}\beta\gamma)} } \gcmp \gcmp \grmpb \gnl
\gvac{6} \glmp \gnot{\hspace{-0,34cm}\mu^{-1}\delta} \grmptb \gnl
\gvac{7} \glmptb \gnot{\hspace{-0,34cm}\psi_Z} \grmptb \gnl
\gvac{7} \gcl{1} \glmpt \gnot{ \hspace{2,2cm} j^{-1}(\mu^{-1}j\s'^{-1}(\gamma^{-1}\beta^{-1}\gamma)\delta^{-1}\mu)} \gcmpb \gcmp \gcmp \gcmp \gcmp \gcmp \grmp \gnl
\gob{3}{X} \gob{1}{Y} \gvac{3} \gob{1}{Z} \gob{3}{F}
\gend
$$
and 
$$ \hspace{-1,8cm} \psi_{X(YZ)}\stackrel{\equref{psi_XY real}}{\stackrel{X,YZ}{=}}
\gbeg{13}{7}
\gvac{3} \got{1}{F\s'''} \got{1}{X} \got{7}{YZ} \gnl
\glmp \gnot{\hspace{0,34cm} j\s''(\gamma j\s'(\mu))} \gcmp \gcmp \grmptb \gcl{1} \gvac{3} \gcl{3} \gnl
\gvac{3} \glmptb \gnot{\hspace{-0,34cm}\psi_X} \grmptb \gnl
\gvac{3} \gcl{3} \glmpt \gnot{\hspace{0,34cm} j\s'(\mu^{-1}) \gamma^{-1}\beta} \gcmp \gcmp \grmpb \gnl
\gvac{7} \glmptb \gnot{\hspace{-0,34cm}\psi_{YZ}} \grmptb \gnl
\gvac{7} \gcl{1} \glmpt \gnot{ \hspace{2,5cm} (j^{02})^{-1}(j\s'(\mu^{-1})\gamma^{-1}\beta^{-1}\gamma j\s'(\mu))} \gcmpb \gcmp \gcmp \gcmp \gcmp \gcmp \gcmp \grmp \gnl
\gvac{2} \gob{3}{XY} \gvac{2} \gob{1}{Z} \gob{3}{F}
\gend\stackrel{\equref{psi_XY real}}{\stackrel{X,Y}{=}}
\gbeg{15}{11}
\gvac{3} \got{1}{F\s'''} \got{1}{X} \gvac{3} \got{1}{Y} \gvac{1} \got{1}{Z} \gnl
\glmp \gnot{\hspace{0,34cm} j\s''(\gamma j\s'(\mu))} \gcmp \gcmp \grmptb \gcl{1} \gvac{3} \gcl{3} \gvac{1} \gcl{6} \gnl
\gvac{3} \glmptb \gnot{\hspace{-0,34cm}\psi_X} \grmptb \gnl
\gvac{3} \gcl{7} \glmpt \gnot{\hspace{0,34cm} \cancel{ j\s'(\mu^{-1}) } \gamma^{-1}\beta} \gcmp \gcmp \grmpb \gnl
\gvac{6} \glmp \gnot{\hspace{-0,34cm} \cancel{ j\s'(\mu)} } \grmptb \gcl{1} \gnl
\gvac{7} \glmptb \gnot{\hspace{-0,34cm}\psi_Y} \grmptb \gnl
\gvac{7} \gcl{4} \glmpt \gnot{\hspace{-0,34cm}\mu^{\w-1}\delta} \grmpb \gnl
\gvac{9} \glmptb \gnot{\hspace{-0,34cm}\psi_Z} \grmptb \gnl
\gvac{9} \gcl{1} \glmpt \gnot{ \hspace{0,8cm} j^{-1}(\cancel{ \mu^{-1} } \delta^{-1}\mu)} \gcmpb \gcmp \gcmp \grmp \gnl
\gvac{9} \gcl{1} \glmpt \gnot{ \hspace{2,6cm} j^{-1}(\mu^{-1})(j^{02})^{-1}(\gamma^{-1}\beta^{-1}\gamma) \cancel{ j^{-1}(\mu)} } \gcmpb \gcmp \gcmp \gcmp \gcmp \gcmp \gcmp \grmp \gnl
\gvac{2} \gob{3}{X} \gvac{2} \gob{1}{Y} \gob{3}{Z} \gob{1}{F.} 
\gend
$$
Comparing the right hand-sides in the above two expressions we see that they are the same, which completes the proof.
\qed\end{proof}

This shows that the composition of 1-cells in $\Bimnd^T(\K)$ over bimonads with isomorphic corresponding 0-automorphism groups is strictly associative. 
We will have that $\Bimnd^T(\K)$ is a Turaev 2-category once we prove that for every pair of 0-cells $(\A, F, (\lambda_{\alpha})_{\alpha\in G}), 
(\A', F\s', (\lambda_{\alpha}')_{\alpha\in G})$ in $\Bimnd^T(\K)$ there is a group map $\Fi^{F,F\s'}: G'\times G\to\Aut(\Bimnd^T(F, F\s')), 
(\alpha, \beta)\mapsto\Fi^{F,F\s'}_{(\alpha, \beta)}$ such that for every $(\alpha, \beta), (\gamma, \delta)\in G'\times G$ it is: 
$ \Fi^{F,F\s'}_{(\alpha, \beta)} \left( \Bimnd^T_{(\gamma, \delta)}(F, F\s') \right) = \Bimnd^T_{(\alpha, \beta)*(\gamma, \delta)*(\alpha, \beta)^{-1}}(F, F\s')$ 
and the conditions in points 6) $ii)$ and 7) $ii)$ of \deref{2-Tur} are satisfied. 
Recall that $\Bimnd^T_{(\gamma, \delta)}(F, F\s')=\Bimnd^T(F_\delta, F\s'_\gamma)$.

\bigskip

\begin{prop} \prlabel{fi gives a functor}
Let $F$ and $F\s'$ be bimonads in $\K$ with isomorphic 0-automorphism groups $G$ and $G'$, respectively, a fusion map $j: G\to G'$ and let $(\alpha, \beta), (\gamma, \delta)\in G'\times G$. 
The following defines an invertible functor: 
\begin{equation} \eqlabel{fi alfa-beta action}
\Fi^{F,F\s'}_{(\alpha, \beta)}: \Bimnd^T_{(\gamma, \delta)}(F, F\s') \to \Bimnd^T_{(\alpha, \beta)*(\gamma, \delta)*(\alpha, \beta)^{-1}}(F, F\s')
\end{equation}
$$(Y, \psi_Y, \phi_Y)\mapsto (Y, \psi_Y^{\gamma^{-1} j\s(\beta)\gamma\alpha^{-1}}, \phi_Y^{\beta j^{-1}(\alpha^{-1})})$$ 
where 
\begin{center} 
\begin{tabular}{p{8cm}p{0cm}p{7cm}}
\begin{equation} \eqlabel{G'-twisted2} \hspace{-3cm}
\psi_Y^{\gamma^{-1} j\s(\beta)\gamma\alpha^{-1}}=
\gbeg{5}{5}
\gvac{3} \got{1}{F\s'} \got{1}{Y} \gnl
\glmp \gnot{\hspace{0,5cm}\gamma^{-1} j\s(\beta)\gamma\alpha^{-1}} \gcmp \gcmp \grmptb \gcl{1} \gnl
\gvac{3} \glmptb \gnot{\hspace{-0,34cm}\psi_Y} \grmptb \gnl
\gvac{3} \gcl{1} \glmptb \gnot{\hspace{1,2cm}j^{-1}(\alpha\gamma^{-1})\beta^{-1}j^{-1}(\gamma)} \gcmp \gcmp \gcmp \gcmp \grmp \gnl
\gvac{3} \gob{1}{Y} \gob{1}{F}
\gend
\end{equation} & & 
\begin{equation} \eqlabel{fi-twisted}
\phi_Y^{\beta j^{-1}(\alpha^{-1})}=
\gbeg{5}{5}
\gvac{2} \got{1}{Y} \got{1}{F} \gnl
\gvac{2} \gcl{1} \glmptb \gnot{\beta j^{-1}(\alpha^{-1})} \gcmp \grmp \gnl
\gvac{2} \glmptb \gnot{\hspace{-0,34cm}\phi_Y} \grmptb \gnl
\glmp \gnot{\alpha j\s(\beta^{-1})} \gcmp \grmptb \gcl{1} \gnl
\gvac{2} \gob{1}{F\s'} \gob{1}{Y.}
\gend
\end{equation}
\end{tabular}
\end{center}
\end{prop}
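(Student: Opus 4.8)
The plan is to exploit that $\Fi^{F,F\s'}_{(\alpha,\beta)}$ changes nothing about the underlying 1-cell $Y$ nor about any 2-cell $\zeta$, and only replaces the pair $(\psi_Y,\phi_Y)$ by the twisted pair of \equref{G'-twisted2} and \equref{fi-twisted}. Accordingly the proof splits into identifying the codomain component, checking that the image triple is a genuine object of $\Bimnd^T(\K)$, and checking functoriality and invertibility. First I would set $g:=\gamma^{-1}j(\beta)\gamma\alpha^{-1}\in G'$ and $t:=\beta j^{-1}(\alpha^{-1})\in G$ and observe that \equref{G'-twisted2} is precisely the $G'$-twist $\psi_Y^{g}$ of \equref{G'-twisted}, since $j^{-1}(g^{-1})=j^{-1}(\alpha\gamma^{-1})\beta^{-1}j^{-1}(\gamma)$ is exactly its lower box, while \equref{fi-twisted} is the comonadic $G$-twist $\phi_Y^{t}$, its lower box being $\alpha j(\beta^{-1})=j(t)^{-1}$. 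Evaluating the group product \equref{product G'G} twice then gives $(\alpha,\beta)*(\gamma,\delta)*(\alpha,\beta)^{-1}=\bigl(\alpha\gamma\alpha^{-1},\, j^{-1}(\alpha)\beta^{-1}\delta\, j^{-1}(\gamma^{-1})\beta\, j^{-1}(\gamma\alpha^{-1})\bigr)=:(\gamma',\delta')$, so the stated codomain is $\Bimnd^T(F_{\delta'},F\s'_{\gamma'})$, as it should be.

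For the image to be an object I must verify the three conditions defining $\Bimnd^T(F_{\delta'},F\s'_{\gamma'})$. That $(Y,\psi_Y^{g})$ is a 1-cell in $\Mnd(\K)$ follows at once from \leref{twisted psi's}(b) applied to $g\in G'$, and dually $(Y,\phi_Y^{t})$ is a 1-cell in $\Comnd(\K)$ by the comonadic counterpart of that lemma (the same argument with \equref{coalg m} in place of \equref{alg m}) applied to $t\in G$. The substance is the twisted Yetter-Drinfel`d compatibility \equref{twisted YD cond}, now to be read with $\lambda_{\delta'}$ and $\lambda'_{\gamma'}$. My plan is to substitute the twist formulas and then slide every automorphism box created by the twists out of $\lambda_{\delta'}$ and $\lambda'_{\gamma'}$ by means of the consequences \equref{l1}--\equref{l8} of \equref{lambda alfa-beta}, thereby reducing $\lambda_{\delta'}$ to $\lambda_\delta$ and $\lambda'_{\gamma'}$ to $\lambda'_\gamma$ decorated by boxes on the outer $F$- and $F\s'$-legs; at that stage the original compatibility \equref{twisted YD cond} for $(Y,\psi_Y,\phi_Y)$ applies, and a final recombination of the freed boxes, using that $j$ and $j^{-1}$ preserve products, should turn the two sides into each other. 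I expect this string-diagram reduction to be the main obstacle: the word $\delta'$ is long and there are several conjugating boxes to route through the two $\lambda$'s without error.

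Finally, functoriality and invertibility are light. Since $\Fi^{F,F\s'}_{(\alpha,\beta)}$ is the identity on the underlying 1-cells and on morphisms, a 2-cell $\zeta$ which is simultaneously a morphism in $\Mnd(\K)$ and in $\Comnd(\K)$ for $(\psi_Y,\phi_Y)$ remains such for $(\psi_Y^{g},\phi_Y^{t})$, because the twisting boxes live on the $F$- and $F\s'$-legs and commute past $\zeta$ by the naturality recorded in \leref{twisted psi's}; preservation of identity 2-cells and of vertical composition is then immediate. For invertibility I would take $\Fi^{F,F\s'}_{(\alpha,\beta)^{-1}}$ as a two-sided inverse and verify directly that performing the twist for $(\alpha,\beta)$ followed by the twist for $(\alpha,\beta)^{-1}$ stacks the boxes on each leg into $j$- and $j^{-1}$-images of mutually inverse group elements, which collapse to identities and return $(\psi_Y,\phi_Y)$. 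Hence $\Fi^{F,F\s'}_{(\alpha,\beta)}$ is an isomorphism of categories onto the component $\Bimnd^T(F_{\delta'},F\s'_{\gamma'})$.
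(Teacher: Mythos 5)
Your proposal follows essentially the same route as the paper's proof: recognizing the two twists as instances of the standard forms \equref{pregled psi-phi twisted} so that the monadic and comonadic 1-cell conditions follow from \leref{twisted psi's} and its dual, computing the conjugation $(\alpha,\beta)*(\gamma,\delta)*(\alpha,\beta)^{-1}$ to identify the target component, and reducing the twisted Yetter--Drinfel`d compatibility to that of $(Y,\psi_Y,\phi_Y)$ by sliding the automorphism boxes through $\lambda_{\delta'}$ and $\lambda'_{\gamma'}$ via \equref{l1}--\equref{l8} (the paper packages the $\lambda'$-side of this sliding into the auxiliary identity \equref{pomocni id}). The remaining points about morphisms and invertibility are treated just as lightly in the paper, so the proposal is correct and matches the paper's argument.
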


\begin{proof}
We first note that $\psi_Y^{\gamma^{-1} j\s(\beta)\gamma\alpha^{-1}}$ and $\phi_Y^{\beta j^{-1}(\alpha^{-1})}$ are of the form: 
\begin{equation} \eqlabel{pregled psi-phi twisted}
\psi^\alpha=
\gbeg{3}{5}
\got{1}{F\s'} \got{1}{} \gnl
\gbmp{\alpha} \gcl{1} \gnl
\glmptb \gnot{\hspace{-0,34cm}\psi} \grmptb \gnl
\gcl{1} \glmptb \gnot{j^{-1}(\alpha^{-1})} \gcmp \grmp \gnl
\gob{1}{} \gob{1}{F}
\gend\hspace{2cm}\text{and}\hspace{2cm}
\phi^\beta=
\gbeg{2}{5}
\got{1}{} \got{1}{F} \gnl
\gvac{1} \gcl{1} \gbmp{\beta} \gnl
\gvac{1} \glmptb \gnot{\hspace{-0,34cm}\phi} \grmptb \gnl
\glmp \gnot{\hspace{-0,38cm} j\s(\beta^{-1})} \grmptb \gcl{1} \gnl
\gvac{2} \gob{1}{F\s'} \gob{1}{}
\gend
\end{equation}
for $\alpha\in G'$ and $\beta\in G$, respectively, for every 1-cell $(\A,F)\to(\A', F\s')$ in $\Mnd(\K)$ and $\Comnd(\K)$, respectively. Moreover, 
note that $\psi_Y^{\gamma^{-1} j\s(\beta)\gamma\alpha^{-1}}$ is as $\psi_X^\beta$ in \equref{G'-twisted}. Similarly as in \leref{twisted psi's} we have 
that $(Y, \phi_Y^{\beta j^{-1}(\alpha^{-1})})$ is a 1-cell in $\Comnd(\K)$. The assignment $\Fi^{F,F\s'}_{(\alpha, \beta)}$ is clearly invertible and it will be well-defined 
once we prove that the identity \equref{YD cond for alfa} holds. Then it will also be well-defined on morphisms and we will have the proof. We first observe 
the following: 
\begin{equation} \eqlabel{pomocni id}
\gbeg{6}{5}
\gvac{3} \got{1}{F\s'} \got{1}{F\s'} \gnl
\glmp \gnot{\hspace{0,5cm}\gamma^{-1} j\s(\beta)\gamma\alpha^{-1}} \gcmp \gcmp \grmptb \gcl{1} \gnl
\gvac{3} \glmptb \gnot{\hspace{-0,34cm}\lambda'_\gamma} \grmptb \gnl
\gvac{1} \glmp \gnot{\hspace{0,2cm} \alpha j\s(\beta^{-1})} \gcmp \grmptb \gcl{1} \gnl
\gvac{3} \gob{1}{F\s'} \gob{1}{F\s'}
\gend\stackrel{\equref{l6}}{=}
\gbeg{6}{5}
\gvac{2} \got{1}{F\s'} \got{1}{F\s'} \gnl
\glmp \gnot{\hspace{0,1cm} j\s(\beta)\gamma\alpha^{-1}} \gcmp  \grmptb \gcl{1} \gnl
\gvac{2} \glmptb \gnot{\hspace{-0,34cm}\lambda'} \grmptb \gnl
\glmp \gnot{\hspace{0,2cm} \alpha j\s(\beta^{-1})} \gcmp \grmptb \glmptb \gnot{\hspace{-0,34cm} \gamma^{-1}} \grmp \gnl
\gvac{2} \gob{1}{F\s'} \gob{1}{F\s'}
\gend\stackrel{\equref{l1}}{=}
\gbeg{7}{5}
\gvac{2} \got{1}{F\s'} \got{1}{F\s'} \gnl
\glmp \gnot{\hspace{0,1cm} j\s(\beta)\gamma\alpha^{-1}} \gcmp  \grmptb \glmptb \gnot{\hspace{0,2cm} \alpha j\s(\beta^{-1})} \gcmp \grmp \gnl
\gvac{2} \glmptb \gnot{\lambda'_{\alpha j\s(\beta^{-1})}} \gcmp \grmptb \gnl
\gvac{2} \gcl{1} \glmptb \gnot{\hspace{-0,34cm} \gamma^{-1}} \grmp \gnl
\gvac{2} \gob{1}{F\s'} \gob{1}{F\s'}
\gend\stackrel{\equref{l3'}}{=} 
\gbeg{6}{6}
\got{1}{F\s'} \gvac{1} \got{1}{F\s'} \gnl
\gcl{1} \glmp \gnot{\hspace{0,2cm} \alpha j\s(\beta^{-1})} \gcmptb \grmp \gnl
\glmptb \gnot{\lambda'_{\alpha \gamma\alpha^{-1}}} \gcmp \grmptb \gnl
\gcl{1} \glmp \gnot{ j\s(\beta)\gamma\alpha^{-1}} \gcmptb \grmp \gnl
\gcl{1} \glmpb \gnot{\hspace{-0,34cm} \gamma^{-1}} \grmpt \gnl
\gob{1}{F\s'} \gob{1}{F\s'}
\gend
\end{equation}
and 
\begin{equation} \eqlabel{conjug}
(\alpha, \beta)*(\gamma, \delta)*(\alpha, \beta)^{-1} = ( \alpha \gamma\alpha^{-1}, j^{-1}(\alpha)\beta^{-1}\delta j^{-1}(\gamma^{-1})\beta j^{-1}(\gamma\alpha^{-1})).
\end{equation} 
For simplicity reasons let us denote $\psi_{\crta Y}=\psi_Y^{\gamma^{-1} j\s(\beta)\gamma\alpha^{-1}}, \phi_{\crta Y}=\phi_Y^{\beta j^{-1}(\alpha^{-1})}$ 
and $\lambda_R= \lambda_{j^{-1}(\alpha)\beta^{-1}\delta j^{-1}(\gamma^{-1})\beta j^{-1}(\gamma\alpha^{-1})}$.
Then the left hand-side of \equref{twisted YD cond}  becomes: 
$$
\gbeg{3}{5}
\got{1}{F\s''} \got{1}{Y} \got{1}{F} \gnl
\glmptb \gnot{\hspace{-0,34cm}\psi_{\crta Y}} \grmptb \gcl{1} \gnl
\gcl{1} \glmptb \gnot{\hspace{-0,34cm}\lambda_R} \grmptb \gnl
\glmptb \gnot{\hspace{-0,34cm}\phi_{\crta Y}} \grmptb \gcl{1} \gnl
\gob{1}{F\s''} \gob{1}{Y} \gob{1}{F}
\gend \stackrel{\equref{l4}}{=}  
\gbeg{14}{11}
\gvac{3} \got{1}{F\s'} \got{1}{Y} \got{11}{F} \gnl
\glmp \gnot{\hspace{0,5cm}\gamma^{-1}j\s(\beta)\gamma\alpha^{-1}} \gcmp \gcmp \grmptb \gcl{1} \gvac{5} \gcl{4} \gnl
\gvac{3} \glmptb \gnot{\hspace{-0,34cm}\psi_Y} \grmptb \gnl
\gvac{3} \gcl{3} \glmpt \gnot{\hspace{1,2cm} \cancel{ j^{-1}(\alpha\gamma^{-1})\beta^{-1}j^{-1}(\gamma) } } \gcmp \gcmp \gcmpb \gcmp \grmp \gnl
\gvac{4} \glmp \gnot{\hspace{1,2cm} \cancel{ j^{-1}(\gamma^{-1})\beta j^{-1}(\gamma\alpha^{-1}) } } \gcmp \gcmp \gcmptb \gcmp \grmp \gnl
\gvac{7} \glmptb \gnot{\hspace{0,4cm} \lambda_{j^{-1}(\alpha)\beta^{-1}\delta}} \gcmp \gcmpb \grmpt \gnl
\gvac{3} \gcn{4}{2}{1}{5} \gcl{1} \gvac{1} \glmptb \gnot{\hspace{1,2cm}j^{-1}(\alpha\gamma^{-1})\beta^{-1}j^{-1}(\gamma)} \gcmp \gcmp \gcmp \gcmp \grmp  \gnl
\gvac{6} \glmpb \gnot{\s\beta j^{-1}(\alpha^{-1})} \gcmpt \grmp \gcl{3} \gnl
\gvac{5} \glmptb \gnot{\hspace{-0,34cm}\phi_Y} \grmptb \gnl
\gvac{3} \glmp \gnot{\alpha j\s(\beta^{-1})} \gcmp \grmptb \gcl{1} \gnl
\gvac{5} \gob{1}{F\s'} \gob{1}{Y} \gob{5}{F}
\gend\stackrel{\equref{l3'}}{=}
\gbeg{11}{8}
\gvac{3} \got{1}{F\s'} \got{1}{Y} \got{1}{F} \gnl
\glmp \gnot{\hspace{0,5cm}\gamma^{-1}j\s(\beta)\gamma\alpha^{-1}} \gcmp \gcmp \grmptb \gcl{1} \gcl{2} \gnl
\gvac{3} \glmptb \gnot{\hspace{-0,34cm}\psi_Y} \grmptb \gnl
\gvac{3} \gcl{2} \gcl{1} \glmptb \gnot{ \beta j^{-1}(\alpha^{-1}) } \gcmp \grmp \gnl
\gvac{4} \glmptb \gnot{\hspace{-0,34cm} \lambda_{\delta}} \grmptb \gnl
\gvac{3} \glmptb \gnot{\hspace{-0,34cm}\phi_Y} \grmptb  \glmptb \gnot{\hspace{1,2cm}j^{-1}(\alpha\gamma^{-1})\beta^{-1}j^{-1}(\gamma)} \gcmp \gcmp \gcmp \gcmp \grmp  \gnl
\gvac{1} \glmp \gnot{\alpha j\s(\beta^{-1})} \gcmp \grmptb \gcl{1} \gcl{1} \gnl
\gvac{3} \gob{1}{F\s'} \gob{1}{Y} \gob{1}{F}
\gend
$$

$$\stackrel{Y}{=}
\gbeg{11}{7}
\gvac{3} \got{1}{F\s'} \got{1}{Y} \got{1}{F} \gnl
\glmp \gnot{\hspace{0,5cm}\gamma^{-1}j\s(\beta)\gamma\alpha^{-1}} \gcmp \gcmp \grmptb \gcl{1} \glmptb \gnot{ \beta j^{-1}(\alpha^{-1}) } \gcmp \grmp \gnl
\gvac{3} \gcl{1} \glmptb \gnot{\hspace{-0,34cm}\phi_Y} \grmptb \gnl
\gvac{3} \glmptb \gnot{\hspace{-0,34cm}\lambda'_\gamma} \grmptb \gcl{1} \gnl
\gvac{1} \glmp \gnot{\alpha j\s(\beta^{-1})} \gcmp \grmptb \glmptb \gnot{\hspace{-0,34cm}\psi_Y} \grmptb \gnl
\gvac{3} \gcl{1}  \gcl{1} \glmptb \gnot{\hspace{1,2cm}j^{-1}(\alpha\gamma^{-1})\beta^{-1}j^{-1}(\gamma)} \gcmp \gcmp \gcmp \gcmp \grmp \gnl
\gvac{3} \gob{1}{F\s'} \gob{1}{X} \gob{1}{Y} \gob{1}{F}
\gend\stackrel{\equref{pomocni id}}{=}
\gbeg{8}{10}
\got{1}{F\s'} \gvac{2} \got{1}{Y} \got{1}{F} \gnl
\gcl{1} \gvac{2} \gcl{1} \glmptb \gnot{ \beta j^{-1}(\alpha^{-1}) } \gcmp \grmp \gnl
\gcl{1} \gvac{2} \glmptb \gnot{\hspace{-0,34cm}\phi_Y} \grmptb \gnl
\gcl{1} \glmp \gnot{\hspace{0,2cm} \alpha j\s(\beta^{-1})} \gcmpb \grmpt \gcl{3} \gnl
\glmptb \gnot{\lambda'_{\alpha \gamma\alpha^{-1}}} \gcmp \grmptb \gnl
\gcl{1} \glmp \gnot{ j\s(\beta)\gamma\alpha^{-1}} \gcmptb \grmp \gnl
\gcl{3} \glmp \gnot{\hspace{-0,34cm} \gamma^{-1}} \grmptb \gcn{1}{1}{3}{1} \gnl 
\gvac{2} \glmptb \gnot{\hspace{-0,34cm}\psi_Y} \grmptb \gnl
\gvac{2}  \gcl{1} \glmptb \gnot{\hspace{1,2cm}j^{-1}(\alpha\gamma^{-1})\beta^{-1}j^{-1}(\gamma)} \gcmp \gcmp \gcmp \gcmp \grmp \gnl
\gob{1}{F\s'} \gob{1}{} \gob{1}{Y} \gob{1}{F}
\gend=
\gbeg{4}{5}
\gvac{1} \got{1}{F\s'} \got{1}{Y} \got{1}{F} \gnl
\gvac{1} \gcl{1} \glmptb \gnot{\hspace{-0,34cm}\phi_{\crta Y}} \grmptb \gnl
\glmp \gnot{\lambda'_{\alpha \gamma\alpha^{-1}}} \gcmptb \grmptb \gcl{1} \gnl
\gvac{1} \gcl{1} \glmptb \gnot{\hspace{-0,34cm}\psi_{\crta Y}} \grmptb \gnl
\gvac{1} \gob{1}{F\s'} \gob{1}{X} \gob{1}{F.}
\gend
$$
\qed\end{proof}

\begin{cor}
The functor from \prref{fi gives a functor} induces an automorphism functor 
$$\Fi^{F,F\s'}_{(\alpha, \beta)} \in \Aut(\Bimnd^T(F, F\s')) .$$
\end{cor}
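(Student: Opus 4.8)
The plan is to recognize that $\Fi^{F,F\s'}_{(\alpha,\beta)}$ is the endofunctor of $\Bimnd^T(F,F\s')$ obtained by gluing together, over all index pairs $(\gamma,\delta)\in G'\times G$, the invertible component functors produced in \prref{fi gives a functor}. First I would record that by \equref{hom cat bim Tur} the hom-category decomposes as the disjoint union $\Bimnd^T(F,F\s')=\bigcup_{(\gamma,\delta)\in G'\times G}^{\bullet}\Bimnd^T_{(\gamma,\delta)}(F,F\s')$, so that specifying an endofunctor is the same as specifying a family of functors $\Bimnd^T_{(\gamma,\delta)}(F,F\s')\to\Bimnd^T(F,F\s')$; no compatibility between the pieces is required, since there are no morphisms between distinct components of a disjoint union. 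The functor of \prref{fi gives a functor} sends the component indexed by $(\gamma,\delta)$ into the component indexed by $(\alpha,\beta)*(\gamma,\delta)*(\alpha,\beta)^{-1}$, which is again one of the pieces of this disjoint union; hence the component functors glue to a single endofunctor $\Fi^{F,F\s'}_{(\alpha,\beta)}$ of $\Bimnd^T(F,F\s')$.

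The key step is to observe that the reindexing map $(\gamma,\delta)\mapsto(\alpha,\beta)*(\gamma,\delta)*(\alpha,\beta)^{-1}$ is conjugation by the fixed element $(\alpha,\beta)$ in the group $(G'\times G,*)$ whose multiplication is \equref{product G'G}; this identification is precisely the computation \equref{conjug} carried out in the proof of \prref{fi gives a functor}. Conjugation by a fixed element of a group is an inner automorphism, hence a bijection of the index set $G'\times G$. I would then combine two facts: that this index map permutes the components, and that each component functor is an isomorphism of categories by \prref{fi gives a functor}. Together these give that $\Fi^{F,F\s'}_{(\alpha,\beta)}$ is bijective on objects and on morphisms; since it restricts to a functor on each component and the components carry no connecting morphisms, it is a functor. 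Therefore it is an isomorphism of categories, that is $\Fi^{F,F\s'}_{(\alpha,\beta)}\in\Aut(\Bimnd^T(F,F\s'))$.

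Finally I would make the inverse explicit as a sanity check: reindexing via $(\gamma,\delta)\mapsto(\alpha,\beta)^{-1}*(\gamma,\delta)*(\alpha,\beta)$ and using the componentwise inverses supplied by \prref{fi gives a functor} assembles an endofunctor which composes with $\Fi^{F,F\s'}_{(\alpha,\beta)}$ to the identity on each component, so it is a two-sided inverse. There is essentially no computational obstacle here, since all the substantive work, namely verifying that the twisted data defined by \equref{G'-twisted2} and \equref{fi-twisted} again satisfy \equref{twisted YD cond} and that the twisting assignment is invertible, was already completed in \prref{fi gives a functor}. The only point demanding care is the bookkeeping of the conjugation index: one must confirm that \equref{conjug} is literally conjugation in $(G'\times G,*)$, so that the reindexing is a genuine permutation of the components, which is the single place where the group axioms for \equref{product G'G} enter.
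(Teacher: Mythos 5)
Your argument is correct and coincides with what the paper intends: the corollary is stated without proof as an immediate consequence of \prref{fi gives a functor}, the point being exactly that the component functors are isomorphisms and the conjugation $(\gamma,\delta)\mapsto(\alpha,\beta)*(\gamma,\delta)*(\alpha,\beta)^{-1}$ in the group $(G'\times G,*)$ of \equref{product G'G} permutes the pieces of the disjoint union \equref{hom cat bim Tur}. Your write-up just makes this gluing explicit, so there is nothing to add.
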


\bigskip

\begin{prop} \prlabel{Fi group map}
The assignment 
$$\Fi^{F,F\s'}: G'\times G\to\Aut(\Bimnd^T(F, F\s'))$$
$$ \hspace{-1,2cm} (\alpha, \beta)\mapsto\Fi^{F,F\s'}_{(\alpha, \beta)}$$
is a group map. 
\end{prop}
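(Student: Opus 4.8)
The plan is to verify the two defining properties of a group homomorphism: that $\Fi^{F,F'}$ sends the unit $(e_{G'},e_G)$ to the identity functor, and that it is multiplicative, $\Fi^{F,F'}_{(\alpha,\beta)}\circ\Fi^{F,F'}_{(\gamma,\delta)}=\Fi^{F,F'}_{(\alpha,\beta)*(\gamma,\delta)}$ for all $(\alpha,\beta),(\gamma,\delta)\in G'\times G$, where $*$ is the product \equref{product G'G}. Since each $\Fi^{F,F'}_{(\alpha,\beta)}$ leaves the underlying 1-cell $X$ unchanged and sends a morphism $\zeta$ to itself, only relabelling the structure 2-cells $\psi,\phi$, both sides of each identity agree on morphisms automatically; hence it suffices to check that they agree on objects, i.e. that the twisted 2-cells $\psi_Y^{\bullet}$, $\phi_Y^{\bullet}$ coincide. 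For the unit, applied to $(Y,\psi_Y,\phi_Y)\in\Bimnd^T_{(\gamma,\delta)}(F,F')$ the functor $\Fi^{F,F'}_{(e_{G'},e_G)}$ twists $\psi_Y$ by $\gamma^{-1}j(e_G)\gamma e_{G'}^{-1}=e_{G'}$ and $\phi_Y$ by $e_G\,j^{-1}(e_{G'}^{-1})=e_G$ (using $j(e_G)=e_{G'}$); both twists are trivial, so $\Fi^{F,F'}_{(e_{G'},e_G)}=\id$, its target index being $(e,e)*(\gamma,\delta)*(e,e)^{-1}=(\gamma,\delta)$.

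For multiplicativity, fix an object $(Y,\psi_Y,\phi_Y)$ of index $(g,d)\in G'\times G$. Applying $\Fi^{F,F'}_{(\gamma,\delta)}$ twists $\psi_Y$ by $a_1=g^{-1}j(\delta)g\gamma^{-1}\in G'$ and $\phi_Y$ by $t_1=\delta\,j^{-1}(\gamma^{-1})\in G$, and changes the index to $(g',d')=(\gamma,\delta)*(g,d)*(\gamma,\delta)^{-1}$ with $g'=\gamma g\gamma^{-1}$ by \equref{conjug}. Applying $\Fi^{F,F'}_{(\alpha,\beta)}$ to this object of index $(g',d')$ twists further by $a_2=(g')^{-1}j(\beta)g'\alpha^{-1}$ on the $\psi$-side and $t_2=\beta\,j^{-1}(\alpha^{-1})$ on the $\phi$-side. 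The crucial step is a composition rule for iterated twists: directly from the shapes \equref{G'-twisted2}, \equref{fi-twisted} (equivalently \equref{pregled psi-phi twisted}) and the fact that $j$ is a group isomorphism, one checks $(\psi^{a_1})^{a_2}=\psi^{a_1a_2}$ for $a_1,a_2\in G'$ and $(\phi^{t_1})^{t_2}=\phi^{t_1t_2}$ for $t_1,t_2\in G$. Indeed, stacking two whiskerings on the $F'$- (resp. $F$-) strand composes as the group product, while $j$ transports the inverse product on the opposite strand, since $j^{-1}(a_2^{-1})\,j^{-1}(a_1^{-1})=j^{-1}((a_1a_2)^{-1})$ and likewise $j(t_2^{-1})\,j(t_1^{-1})=j((t_1t_2)^{-1})$.

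It then remains to match the combined twists with those prescribed by $\Fi^{F,F'}_{(\alpha,\beta)*(\gamma,\delta)}$. Writing $(A,B)=(\alpha,\beta)*(\gamma,\delta)=(\alpha\gamma,\ \delta\,j^{-1}(\gamma^{-1})\beta\,j^{-1}(\gamma))$ as in \equref{product G'G}, this functor twists $\psi_Y$ by $g^{-1}j(B)gA^{-1}$ and $\phi_Y$ by $B\,j^{-1}(A^{-1})$. Using $j(B)=j(\delta)\gamma^{-1}j(\beta)\gamma$ and $g'=\gamma g\gamma^{-1}$, a short computation gives $a_1a_2=g^{-1}j(\delta)\gamma^{-1}j(\beta)\gamma g\gamma^{-1}\alpha^{-1}=g^{-1}j(B)gA^{-1}$ and $t_1t_2=\delta\,j^{-1}(\gamma^{-1})\beta\,j^{-1}(\alpha^{-1})=B\,j^{-1}(A^{-1})$, so the twisted 2-cells coincide. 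Finally, the two functors land in the same component: the target index of the composite is $(\alpha,\beta)*\big((\gamma,\delta)*(g,d)*(\gamma,\delta)^{-1}\big)*(\alpha,\beta)^{-1}$, which equals $\big((\alpha,\beta)*(\gamma,\delta)\big)*(g,d)*\big((\alpha,\beta)*(\gamma,\delta)\big)^{-1}$ by associativity of the group operation $*$ on $G'\times G$ (established around \equref{product G'G}), i.e. the target index of $\Fi^{F,F'}_{(\alpha,\beta)*(\gamma,\delta)}$. This proves multiplicativity.

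The main obstacle I anticipate is the bookkeeping in the composition-of-twists step together with the final group computation: one must track which factor lives in $G$ and which in $G'$, transporting between them by $j$ and $j^{-1}$, and handle the index-dependence of the $\psi$-twist through the conjugating substitution $g\mapsto g'=\gamma g\gamma^{-1}$. The $\phi$-side is comparatively painless because its twist $\beta\,j^{-1}(\alpha^{-1})$ is independent of the source index; the $\psi$-side, whose twist $g^{-1}j(\beta)g\alpha^{-1}$ involves conjugation by the index, is where the associativity of $*$ and the homomorphism property of $j$ must be used in concert.
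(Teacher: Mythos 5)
Your proposal is correct and follows essentially the same route as the paper's proof: evaluate both functors on an object of a fixed component, use the iterated-twist rule $(\psi^{a_1})^{a_2}=\psi^{a_1a_2}$, $(\phi^{t_1})^{t_2}=\phi^{t_1t_2}$ read off from \equref{pregled psi-phi twisted}, and match the resulting exponents with those of $\Fi^{F,F\s'}_{(\alpha,\beta)*(\gamma,\delta)}$ (your $\Omega=a_1a_2$ and $\Sigma=t_1t_2$ computations agree with the paper's). Your explicit check that the target components coincide via associativity of the product \equref{product G'G} is left implicit in the paper but is a welcome addition.
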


\begin{proof}
Take $(\alpha, \beta), (\gamma, \delta), (\mu, \nu) \in G'\times G$ and $Z\in\Bimnd^T_{(\mu,\nu)}(F, F\s')$, let us prove that 
$\Fi^{F,F\s'}_{(\alpha, \beta)*(\gamma,\delta)}(Z)=\Fi^{F,F\s'}_{(\alpha, \beta)}\comp \Fi^{F,F\s'}_{(\gamma,\delta)}(Z)$. For that purpose we recall from 
\equref{product G'G} that $(\alpha, \beta)*(\gamma,\delta)=(\alpha\gamma,\delta j^{-1}(\gamma^{-1})\beta j^{-1}(\gamma))$ and we observe that 
 $\Fi^{F,F\s'}_{(\gamma,\delta)}(Z)$ 
lies in the component $(\gamma, \delta)*(\mu, \nu)*(\gamma, \delta)^{-1} = ( \gamma \mu\gamma^{-1}, j^{-1}(\gamma)\delta^{-1}\nu j^{-1}(\mu^{-1})\delta j^{-1}(\mu\gamma^{-1}))$, 
see \equref{conjug}. Then it follows: 
$\Fi^{F,F\s'}_{(\alpha, \beta)*(\gamma,\delta)}(Z)=(Z, \psi_Z^\Omega, \phi_Z^\Sigma)$ where 
$$\Omega=\mu^{-1} j\s\left(\delta j^{-1}(\gamma^{-1})\beta j^{-1}(\gamma)\right)\mu\gamma^{-1}\alpha^{-1}=\mu^{-1} j\s(\delta)\gamma^{-1} j\s(\beta)\gamma\mu\gamma^{-1}\alpha^{-1}$$ 
and 
$$\Sigma=\delta j^{-1}(\gamma^{-1})\beta j^{-1}(\gamma) j^{-1}(\gamma^{-1}\alpha^{-1})=\delta j^{-1}(\gamma^{-1})\beta j^{-1}(\alpha^{-1}).$$
On the other hand we find: 
$$\begin{array}{rl}
\Fi^{F,F\s'}_{(\alpha, \beta)}\comp \Fi^{F,F\s'}_{(\gamma,\delta)}(Z) 
 \hskip-1em& =\Fi^{F,F\s'}_{(\alpha, \beta)}(Z, \psi_Z^{\mu^{-1} j\s(\delta)\mu\gamma^{-1}}, \phi_Z^{\delta j^{-1}(\gamma^{-1})})\\
& =(Z, (\psi_Z^{\mu^{-1} j\s(\delta)\cancel{\mu}\cancel{\gamma^{-1}} })^{\cancel{\gamma}\cancel{\mu^{-1} } \gamma^{-1}j\s(\beta)\gamma\mu\gamma^{-1}\alpha^{-1}}, 
  (\phi_Z^{\delta j^{-1}(\gamma^{-1})})^{\beta j^{-1}(\alpha^{-1})} ) \\
	&=(Z, \psi_Z^\Omega, \phi_Z^\Sigma)\\
	&=\Fi^{F,F\s'}_{(\alpha, \beta)*(\gamma,\delta)}(Z)
\end{array}$$
where in the third equality we used that 
$\psi^{\alpha\alpha'}=(\psi^\alpha)^{\alpha'}$ and $\phi^{\beta\beta'}=(\phi^\beta)^{\beta'}$, for all $\alpha, \alpha'\in G'$ and $\beta, \beta'\in G'$, 
which is clear from \equref{pregled psi-phi twisted}. Moreover, from \equref{fi alfa-beta action} it is clear that $\Fi^{F,F\s'}_{(e_{G'}, e_G)}=\Id$, 
the identity functor on the category $\Bimnd^T_{(\gamma, \delta)}(F, F\s')$ for every $(\gamma, \delta)\in G'\times G$, and thus on the category $\Bimnd^T(F, F\s')$. 
\qed\end{proof}


We finish the proof that $\Bimnd^T(\K)$ is a Turaev 2-category by proving the following Proposition. Its proof is straightforward and 
technical, we include it for the record.

\begin{prop}
For every $(X, \psi_X, \phi_X) \in \Bimnd^T(F\s'_\beta, F\s''_\alpha)$ and $(Y, \psi_Y, \phi_Y)\in \Bimnd^T(F_\delta,F\s'_\gamma)$ composable 1-cells in $\Bimnd^T(\K)$ 
and $(\mu, \nu)\in G''\times G$ it is
$$\Fi^{F, F\s''}_{(\mu, \nu)}(XY)=\Fi^{F\s',F\s''}_{(\mu, \nu)_{(1)}}(X)\cdot\Fi^{F,F\s'}_{(\mu, \nu)_{(2)}}(Y).$$
\end{prop}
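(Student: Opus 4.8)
The plan is to prove the stated equality directly, by checking that its two sides are literally the \emph{same} object of $\Bimnd^T(\K)$; since $\Bimnd^T(\K)$ is a Turaev $2$-category, the comparison $2$-cell $s$ of \deref{2-Tur}~6~$ii)$ is forced to be the identity, so equality of objects is exactly what must be verified. Both sides carry the same underlying $1$-cell $XY$, because $\Fi$ leaves underlying $1$-cells unchanged and the composition functor of \prref{comp 1-cells} sends the two triples to a triple on $XY$; hence everything reduces to matching the structure $2$-cells $\psi$ and $\phi$. First I would record, from \equref{pi for bim}, that $(\mu,\nu)_{(1)}=(\mu,j(\nu))\in G''\times G'$ and $(\mu,\nu)_{(2)}=(j'^{-1}(\mu),\nu)\in G'\times G$, and that the fusion map $G\to G''$ attached to $XY$ is the composite $j'j$. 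Applying \prref{fi gives a functor} I would spell out $\bar X=\Fi^{F',F''}_{(\mu,j(\nu))}(X)=(X,\psi_X^{\alpha^{-1}(j'j)(\nu)\alpha\mu^{-1}},\phi_X^{j(\nu)j'^{-1}(\mu^{-1})})$ and $\bar Y=\Fi^{F,F'}_{(j'^{-1}(\mu),\nu)}(Y)=(Y,\psi_Y^{\gamma^{-1}j(\nu)\gamma j'^{-1}(\mu^{-1})},\phi_Y^{\nu(j'j)^{-1}(\mu^{-1})})$ by \equref{G'-twisted2} and \equref{fi-twisted}.

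The $\phi$-part is short. By \equref{fi-twisted} the $\phi$-structure of the left-hand side is $\phi_{XY}^{\,\nu(j'j)^{-1}(\mu^{-1})}$, while by \equref{phi_YX} the $\phi$-structure of the right-hand side is the vertical stacking of $\bar\phi_X=\phi_X^{\,j(\nu)j'^{-1}(\mu^{-1})}$ over $\bar\phi_Y=\phi_Y^{\,\nu(j'j)^{-1}(\mu^{-1})}$. I would then observe that the decoration $j'^{-1}(\mu)j(\nu)^{-1}$ that $\bar\phi_Y$ deposits on its outgoing $F'$-strand and the decoration $j(\nu)j'^{-1}(\mu^{-1})$ carried by $\bar\phi_X$ on its incoming $F'$-strand are mutually inverse elements of $G'$, so they cancel; what survives is $\phi_X\comp\phi_Y=\phi_{XY}$ decorated by $\nu(j'j)^{-1}(\mu^{-1})$ on the $F$-leg and by $\mu(j'j)(\nu^{-1})$ on the $F''$-leg, the latter produced from the homomorphism identity $j'j=j'\comp j$. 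This is exactly $\phi_{XY}^{\,\nu(j'j)^{-1}(\mu^{-1})}$, so the $\phi$-structures agree.

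The $\psi$-part is the main computation. On the left I would expand $\psi_{XY}^{\,\rho^{-1}(j'j)(\nu)\rho\mu^{-1}}$, where $\rho=\alpha j'(\gamma)$ is the $G''$-component of $XY$ and $\psi_{XY}$ is in the form \equref{psi_XY real}; by the shape \equref{pregled psi-phi twisted} the outer twist only adds decorations on the top $F''$-leg and the bottom $F$-leg, leaving the internal $F'$-decoration $\gamma^{-1}\beta$ of \equref{psi_XY real} untouched. On the right I would first read off the components $(\alpha',\beta')$ of $\bar X$ and $(\gamma',\delta')$ of $\bar Y$ from the conjugation formula \equref{conjug}, then expand $\psi_{\bar X\bar Y}$ by \equref{psi_YX}; using $\psi^{\alpha\alpha'}=(\psi^{\alpha})^{\alpha'}$ (which is \equref{pregled psi-phi twisted}) I would absorb the composition-twists $\bar\psi_X^{\gamma'}$ and $\bar\psi_Y^{\gamma'^{-1}\beta'\gamma'}$ into single group exponents. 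The assertion then reduces to three group identities: that the decorations on the $F''$-leg, on the internal $F'$-leg, and on the $F$-leg each coincide with those on the left, the matching being driven by $j'j=j'\comp j$ together with the formulas for $(\mu,\nu)_{(1)}$ and $(\mu,\nu)_{(2)}$.

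I expect the internal $F'$-strand to be the main obstacle: on the left it is simply $\gamma^{-1}\beta$ inherited from \equref{psi_XY real}, whereas on the right it must be assembled out of $\gamma'^{-1}\beta'$ together with the bottom decoration of $\bar\psi_X$ and the top decoration of $\bar\psi_Y$, all expressed through $\alpha',\beta',\gamma'$ and the fusion maps. Verifying that these collapse back to $\gamma^{-1}\beta$ is a long but mechanical computation in $G,G',G''$; its consistency at the level of target indices is guaranteed by \equref{conjug compat}, that is by the compatibility $\omega(\beta*\alpha)\omega^{-1}=(\omega_{(1)}\beta)*(\alpha\omega_{(2)}^{-1})$ between the group products and the transitive product, so once the three decorations are matched the two triples are equal as objects, which is the claim.
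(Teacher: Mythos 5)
Your proposal is correct and follows essentially the same route as the paper: both sides are reduced to the same underlying $1$-cell $XY$, the twists of $\bar X$ and $\bar Y$ are read off from \equref{pi for bim}, \equref{G'-twisted2} and \equref{fi-twisted}, the $\phi$-structures are matched by the same cancellation of the mutually inverse $F\s'$-decorations, and the $\psi$-structures are matched by unravelling \equref{psi_YX} against \equref{on prod} exactly as in the paper's string-diagram computation. The only difference is one of completeness, not of method: you assert the final matching of the three $\psi$-decorations as ``long but mechanical,'' whereas the paper displays that bookkeeping explicitly.
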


\begin{proof}
To prove the result we note the following: 
from \equref{fi alfa-beta action} by \equref{trans product} we have 
\begin{equation} \eqlabel{on prod}
\Fi^{F, F\s''}_{(\mu, \nu)}(XY)=
(XY, \psi_{XY}^{j\s'(\gamma^{-1})\alpha^{-1} j^{02}(\nu)\alpha j\s'(\gamma)\mu^{-1}}, \phi_{XY}^{\nu (j^{02})^{-1}(\mu^{-1})}),
\end{equation}
and on the other hand, by \equref{pi for bim} it is $\pi((\mu, \nu))=\left((\mu, j(\nu)), (j\s'^{-1}(\mu), \nu)\right)$, hence 
we have: 
$$\Fi^{F\s',F\s''}_{(\mu, j(\nu))}(X)=(X, \psi_X^{\alpha^{-1} j\s'j\s(\nu)\alpha\mu^{-1}}, \phi_X^{j\s(\nu)(j\s')^{-1}(\mu^{-1})})
=:(X,\psi_{\crta X}, \phi_{\crta X})$$
and it lies in the component 
$$(\mu, j(\nu))(\alpha, \beta)(\mu, j(\nu))^{-1}=
\left(\mu\alpha\mu^{-1}, j\s'^{-1}(\mu)j\s(\nu^{-1})\beta j\s'^{-1}(\alpha^{-1})j\s(\nu) j\s'^{-1}(\alpha\mu^{-1})\right)=:
(\Omega^1, \Omega^2)=
\Omega,$$ 
and 
$$\Fi^{F,F\s'}_{((j\s')^{-1}(\mu), \nu)}(Y)=(Y, \psi_Y^{\gamma^{-1} j\s(\nu)\gamma (j\s')^{-1}(\mu^{-1})}, 
\phi_Y^{\nu j^{-1}((j\s')^{-1}(\mu^{-1}))})
=:(Y,\psi_{\crta Y}, \phi_{\crta Y})$$
and it lies in the component 
$$\begin{array}{rl}
(j\s'^{-1}(\mu), \nu) &\hskip-0.8em (\gamma, \delta)(j\s'^{-1}(\mu), \nu)^{-1}= \\
& \left(j\s'^{-1}(\mu)\gamma j\s'^{-1}(\mu^{-1}), 
j^{-1}(j\s'^{-1}(\mu))\nu^{-1}\delta j^{-1}(\gamma^{-1})\nu j^{-1}(\gamma j\s'^{-1}(\mu^{-1}))\right)=:
(\Sigma^1, \Sigma^2)=\Sigma.
\end{array}$$
Consequently, 
$$\Fi^{F\s',F\s''}_{(\mu, j(\nu))}(X) \cdot\Fi^{F,F\s'}_{((j\s')^{-1}(\mu), \nu)}(Y)=
(XY, \psi_{\crta X\crta Y}, \phi_{\crta X\crta Y})$$
where $\psi_{\crta X\crta Y}$ and $\phi_{\crta X\crta Y}$ are given by \equref{psi_XY real} and \equref{phi_YX}, respectively as follows: 
$$ 
\psi_{\crta X\crta Y}=
\gbeg{10}{7}
\gvac{1} \got{1}{F\s''} \got{1}{\crta X} \got{5}{\crta Y} \gnl
\glmp \gnot{\hspace{-0,36cm}j\s'(\Sigma^1)} \grmptb \gcl{1} \gvac{2} \gcl{3} \gnl
\gvac{1} \glmptb \gnot{\hspace{-0,34cm}\psi_{\crta X}} \grmptb \gnl
\gvac{1} \gcl{3} \glmpt \gnot{(\Sigma^1)^{\w-1}\Omega^2} \gcmp \grmpb \gnl
\gvac{4} \glmptb \gnot{\hspace{-0,34cm}\psi_{\crta Y}} \grmptb \gnl
\gvac{4} \gcl{1} \glmpt \gnot{ \hspace{1,3cm} j^{-1}((\Sigma^1)^{-1}(\Omega^2)^{-1}\Sigma^1)} \gcmpb \gcmp \gcmp \gcmp \grmp \gnl
\gob{3}{\crta X} \gob{3}{\crta Y} \gob{1}{F}
\gend \stackrel{\equref{G'-twisted}}{=}
\gbeg{14}{11}
\gvac{1} \got{1}{F\s''} \got{7}{\crta X} \got{5}{\crta Y} \gnl
\glmp \gnot{\hspace{-0,36cm}j\s'(\Sigma^1)} \grmptb \gvac{3} \gcl{2} \gvac{5} \gcl{6} \gnl
\glmp \gnot{\hspace{0,9cm} \alpha^{-1} j\s'j\s(\nu)\alpha\mu^{-1} } \gcmpt \gcmp \gcmp \grmpb \gnl
\gvac{4} \glmptb \gnot{\hspace{-0,34cm}\psi_X} \grmptb \gnl
\gvac{4} \gcl{6} \glmpt \gnot{\hspace{1,2cm} j^{-1}(\mu\alpha^{-1}j\s'j\s(\nu^{-1})\alpha)} \gcmp \gcmp \gcmpb \gcmp \grmp \gnl
\gvac{7} \glmp \gnot{(\Sigma^1)^{\w-1}\Omega^2} \gcmpt \grmpb \gnl
\gvac{5} \glmp \gnot{\hspace{1,3cm} \gamma^{-1} j\s(\nu)\gamma (j\s')^{-1}(\mu^{-1}) } \gcmp \gcmp\gcmp \gcmpt \grmpb \gnl
\gvac{10} \glmptb \gnot{\hspace{-0,34cm}\psi_Y} \grmptb \gnl
\gvac{10} \gcl{2} \glmpt \gnot{\hspace{1,7cm} j^{-1}\w((j\s')^{-1}\w(\mu)\gamma^{-1}j\s(\nu^{-1})\gamma) } \gcmp \gcmp\gcmp \gcmp \gcmp \grmp \gnl
\gvac{11} \glmpt \gnot{ \hspace{1,3cm} j^{-1}((\Sigma^1)^{-1}(\Omega^2)^{-1}\Sigma^1)} \gcmpb \gcmp \gcmp \gcmp \grmp \gnl
\gob{9}{\crta X} \gob{3}{\crta Y} \gob{1}{F}
\gend
$$

$$=
\gbeg{8}{9}
\gvac{3} \got{1}{F\s''} \got{1}{\crta X} \got{3}{\crta Y} \gnl
\gvac{1} \glmp \gnot{ j\s'(\gamma)\mu^{-1}} \gcmp\grmptb \gcl{2} \gvac{1} \gcl{4} \gnl
\glmp \gnot{\hspace{0,24cm}\alpha^{-1} j^{02}(\nu)\alpha } \gcmp \gcmp \grmptb \gnl
\gvac{3} \glmptb \gnot{\hspace{-0,34cm}\psi_X} \grmptb \gnl
\gvac{3} \gcl{4} \glmpt \gnot{\hspace{-0,34cm}\gamma^{\w-1}\beta} \grmpb \gnl
\gvac{5} \glmptb \gnot{\hspace{-0,34cm}\psi_Y} \grmptb \gnl
\gvac{5} \gcl{1} \glmpt \gnot{ \hspace{2,5cm} \nu^{-1}(j^{02})^{-1}(\alpha)j^{-1}(\beta^{-1})j^{-1}(\gamma)} \gcmpb \gcmp  \gcmp\gcmp\gcmp \gcmp \gcmp \grmp \gnl
\gvac{5} \gcl{1} \glmptb \gnot{ \hspace{2,5cm} (j^{02})^{-1}(\mu) j^{-1}(\gamma^{-1})(j^{02})^{-1}(\alpha^{-1})} \gcmp \gcmp \gcmp \gcmp\gcmp \gcmp \gcmp \grmp \gnl
\gvac{2} \gob{3}{\crta X} \gob{1}{\crta Y} \gob{1}{F}
\gend
$$

$$
\phi_{\crta X\crta Y}=
\gbeg{3}{4}
\got{1}{\crta X} \got{1}{\crta Y} \got{1}{F} \gnl
\gcl{1} \glmptb \gnot{\hspace{-0,34cm}\phi_{\crta Y}} \grmptb \gnl
\glmptb \gnot{\hspace{-0,34cm}\phi_{\crta X}} \grmptb \gcl{1} \gnl
\gob{1}{F\s''} \gob{1}{\crta X} \gob{1}{\crta Y} 
\gend \stackrel{\equref{fi-twisted}}{=}
\gbeg{15}{8}
\gvac{5} \got{1}{\crta X} \gvac{4} \got{1}{\crta Y} \got{1}{F} \gnl
\gvac{5} \gcl{4} \gvac{4} \gcl{1} \glmptb \gnot{ \hspace{0,9cm} \nu j^{-1}((j\s')^{-1}(\mu^{-1}))} \gcmp \gcmp \gcmp \grmp \gnl
\gvac{10} \glmptb \gnot{\hspace{-0,34cm}\phi_Y} \grmptb \gnl
\gvac{6} \glmpb \gnot{\hspace{0,8cm}\cancel{  (j\s')^{-1}(\mu) j\s(\nu^{-1})} } \gcmp \gcmp \gcmp \grmpt \gcl{4} \gnl 
\gvac{6} \glmptb \gnot{\hspace{0,8cm}\cancel{ j\s(\nu)(j\s')^{-1}(\mu^{-1})} }\gcmp \gcmp \gcmp \grmp \gnl
\gvac{5}\glmptb \gnot{\hspace{-0,34cm}\phi_X} \grmptb \gnl
\glmp \gnot{\hspace{1,2cm} j\s'((j\s')^{-1}(\mu) j\s(\nu^{-1})} \gcmp\gcmp \gcmp \gcmp \grmptb \gcl{1} \gnl
\gvac{5}\gob{1}{F\s''} \gob{1}{\crta X} \gob{9}{\crta Y} 
\gend=\phi_{XY}^{\nu (j^{02})^{-1}(\mu^{-1})}
$$ 
The last equality is obvious, it refers to the value in \equref{on prod}. On the other hand, for $\psi_{XY}^{j\s'(\gamma^{-1})\alpha^{-1} j^{02}(\nu)\alpha j\s'(\gamma)\mu^{-1}}$ 
from \equref{on prod} we find: 
$$\hspace{-4cm}
\psi_{XY}^{j\s'(\gamma^{-1})\alpha^{-1} j^{02}(\nu)\alpha j\s'(\gamma)\mu^{-1}}=
\gbeg{10}{7}
\gvac{4} \got{1}{F\s''} \got{1}{\crta X\crta Y} \gnl
\glmp \gnot{\hspace{0,8cm} j^{02}(\nu)\alpha j\s'(\gamma)\mu^{-1}} \gcmp\gcmp\gcmp\grmptb \gcl{2} \gnl
\gvac{1} \glmp \gnot{\hspace{0,24cm} j\s'(\gamma^{-1})\alpha^{-1}} \gcmp \gcmp \grmptb \gnl
\gvac{4} \glmptb \gnot{\hspace{-0,34cm}\psi_{XY}} \grmptb \gnl
\gvac{4} \gcl{1} \glmptb \gnot{ \hspace{1,7cm} (j^{02})^{-1}\left(j^{02}(\nu^{-1})\alpha j\s'(\gamma)\right)} \gcmp \gcmp\gcmp \gcmp \gcmp \grmp \gnl
\gvac{4} \gcl{1} \glmptb \gnot{ \hspace{1,7cm} (j^{02})^{-1}\left(\mu j\s'(\gamma^{-1})\alpha^{-1}\right)} \gcmp \gcmp\gcmp \gcmp \gcmp \grmp \gnl
\gvac{4} \gob{1}{\crta X\crta Y} \gob{1}{F}
\gend=
\gbeg{8}{10}
\gvac{4} \got{1}{F\s''} \got{1}{\crta X} \got{3}{\crta Y} \gnl
\glmp \gnot{\hspace{0,8cm} j^{02}(\nu)\alpha j\s'(\gamma)\mu^{-1}} \gcmp\gcmp\gcmp\grmptb \gcl{3} \gvac{1} \gcl{5} \gnl
\gvac{1} \glmp \gnot{\hspace{0,24cm} j\s'(\gamma^{-1})\alpha^{-1}} \gcmp \gcmp \grmptb \gnl
\gvac{3} \glmp \gnot{\hspace{-0,34cm} j\s'(\gamma)} \grmptb \gnl
\gvac{4} \glmptb \gnot{\hspace{-0,34cm}\psi_X} \grmptb \gnl
\gvac{4} \gcl{4} \glmpt \gnot{\hspace{-0,34cm}\gamma^{\w-1}\beta} \grmpb \gnl
\gvac{6} \glmptb \gnot{\hspace{-0,34cm}\psi_Y} \grmptb \gnl
\gvac{6} \gcl{1} \glmpt \gnot{ \hspace{0,5cm} j^{-1}(\gamma^{-1}\beta^{-1}\gamma)} \gcmpb \gcmp \grmp \gnl
\gvac{6} \gcl{1} \glmptb \gnot{ \hspace{3,4cm} (j^{02})^{-1}\left(\mu j\s'(\gamma^{-1})\alpha^{-1}(j^{02})(\nu^{-1})\alpha j\s'(\gamma)\right)} \gcmp \gcmp \gcmp \gcmp \gcmp\gcmp\gcmp \gcmp \gcmp \grmp \gnl
\gvac{3} \gob{3}{\crta X} \gob{1}{\crta Y} \gob{1}{F}
\gend
$$
which clearly is equal to the above $\psi_{\crta X\crta Y}$. 
\qed\end{proof}

\bigskip

The above construction is a Turaev 2-category for bimonads in $\K$. It is directly seen that it is a Turaev extension of the 2-category of bimonads $\Bimnd(\K)$ 
that we recalled in \seref{prelim}. 

\bigskip

\begin{ex}
When $\K=\dul{\C}$ is the 2-category induced by a monoidal category $\C$, a bimonad in $\dul{\C}$ is an algebra and a coalgebra $F$ in $\C$ together with a morphism 
$\lambda: F\ot F\to F\ot F$ so that the conditions \equref{bimonad} and \equref{monadic d.l.}--\equref{comonadic d.l.} with $F\s'=X=F$ hold. 
(As a matter of fact, we should consider horizontally symmetric diagrams to the latte ones, as the tensor product 
in monoidal categories is read from the left to the right, while the composition of 1-cells in bicategories is read the other way around.) 

The 0-automorphism group $G$ of a bimonad $F$ in $\dul{\C}$ is just the automorphism group of the algebra and coalgebra $F$ in $\C$. For two bimonads 
$(F,\lambda)$ and $(F\s', \lambda')$ in $\dul{\C}$ there is a category $\Bimnd(\dul{\C})(F,F\s')$ as described in \equref{hom cat bim Tur}, now understood in terms of $\C$. 
Fixing one 0-cell in $\Bimnd(\dul{\C})$, that is a bimonad $F$, we have a monoidal category $\Bimnd(\dul{\C})(F,F)={}^F_F\YD(\dul{\C})=
\displaystyle{\bigcup_{(\alpha,\beta)\in G\times G}^{\bullet} {}^F_F\YD(\dul{\C})_{(\alpha,\beta)}}$ whose objects are 
objects $X\in\C$ together with morphisms $\psi: F\ot X\to X\ot F$ and $\phi: X\ot F\to F\ot X$ in $\C$ satisfying \equref{monadic d.l.}--
\equref{psi-lambda-phi 2-bimonad} (with $F\s'=F$). 
\end{ex}

\begin{ex} \exlabel{brC}
If $\C$ in the above example is even braided with a braiding 
$\gbeg{2}{1}
\gbr \gnl
\gend$, a bimonad in $\dul{\C}$ is a bialgebra $F$ in $\C$ with $\lambda=
\gbeg{3}{5}
\got{2}{F} \got{1}{F} \gnl
\gcmu \gcl{1} \gnl
\gcl{1} \gbr  \gnl
\gmu \gcl{1} \gnl
\gob{2}{F} \gob{1}{F}
\gend$. The 0-automorphism group $G$ is then just the automorphism group of the bialgebra $F$ in $\C$. Given a bialgebra $F$ 
and a left $F$-module and $F$-comodule $X$ in $\C$, for the 2-cells {\em i.e.} morphisms $\psi, \phi$ and $\lambda_\alpha$ we may take the following:
$$
\psi=
\gbeg{4}{5}
\got{2}{F} \got{1}{X} \gnl
\gcmu \gcl{1} \gnl
\gcl{1} \gbr \gnl
\glm \gcl{1} \gnl
\gvac{1} \gob{1}{X} \gob{1}{F} 
\gend, \quad
\phi=
\gbeg{4}{5}
\gvac{1} \got{1}{X} \got{1}{F} \gnl
\glcm \gcl{1} \gnl
\gcl{1} \gbr \gnl
\gmu \gcl{1} \gnl
\gob{2}{F} \gob{1}{X} 
\gend, \quad
\lambda_\alpha=
\gbeg{4}{5}
\got{2}{F} \got{1}{F} \gnl
\gcmu \gcl{1} \gnl
\gbmp{\alpha} \gbr  \gnl
\gmu \gcl{1} \gnl
\gob{2}{F} \gob{1}{F}
\gend
$$ 
Then $\Bimnd(\dul{\C})(F,F)={}^F_F\YD(\dul{\C})$ coincides with the well-known monoidal category of Yetter-Drinfel`d modules in $\C$, 
see for example \cite{Besp, F-tr}. For $\alpha,\beta\in G(F)$ it is $X\in\Bimnd^T(F,F)_{(\alpha,\beta)}$ when the following holds: 
$$
\gbeg{3}{5}
\got{1}{F} \got{1}{X} \got{1}{F} \gnl
\glmptb \gnot{\hspace{-0,34cm}\psi} \grmptb \gcl{1} \gnl
\gcl{1} \glmptb \gnot{\hspace{-0,34cm}\lambda_\beta} \grmptb \gnl
\glmptb \gnot{\hspace{-0,34cm}\phi} \grmptb \gcl{1} \gnl
\gob{1}{F} \gob{1}{X} \gob{1}{F}
\gend=
\gbeg{5}{11}
\got{2}{F} \got{1}{X} \got{3}{F} \gnl
\gcmu \gcl{1} \gvac{1} \gcl{3} \gnl
\gcl{1} \gbr \gnl
\glm \gcn{1}{1}{1}{2} \gnl
\gvac{1} \gcl{1} \gcmu \gcl{1} \gnl
\gvac{1} \gcl{1} \gbmp{\beta} \gbr \gnl
\gvac{1} \gcl{1} \gmu \gcl{1} \gnl
\glcm \gcn{2}{1}{2}{1} \gcl{3} \gnl
\gcl{1} \gbr \gnl
\gmu \gcl{1} \gnl
\gob{2}{F} \gob{1}{X} \gob{3}{F}
\gend=
\gbeg{5}{11}
\got{2}{F} \gvac{1} \got{1}{X} \got{1}{F} \gnl
\gcn{2}{3}{2}{2} \glcm \gcl{1} \gnl
\gvac{2} \gcl{1} \gbr \gnl
\gvac{2} \gmu \gcl{4} \gnl
\gcmu \gcn{1}{1}{2}{1} \gnl
\gbmp{\alpha} \gbr \gnl
\gmu \gcn{1}{1}{1}{2}  \gnl
\gcn{2}{3}{2}{2} \gcmu \gcl{1} \gnl
\gvac{2} \gcl{1} \gbr \gnl
\gvac{2} \glm \gcl{1} \gnl
\gob{1}{\hspace{0,28cm} F} \gvac{2} \gob{1}{X} \gob{1}{F}
\gend=
\gbeg{3}{5}
\got{1}{F} \got{1}{X} \got{1}{F} \gnl
\gcl{1} \glmptb \gnot{\hspace{-0,34cm}\phi} \grmptb \gnl
\glmptb \gnot{\hspace{-0,34cm}\lambda_\alpha} \grmptb \gcl{1} \gnl
\gcl{1} \glmptb \gnot{\hspace{-0,34cm}\psi} \grmptb \gnl
\gob{1}{F} \gob{1}{X} \gob{1}{F.}
\gend
$$
\end{ex}

\begin{ex}
Applying to the last identity in the above Example $FX\eta_F$ from above and $FX\Epsilon_F$ from below one gets \equref{braided ab}. If the braiding of $\C$ is symmetric when acting 
between $F$ and $X$ and between $F$ and $F$ (that is, if on these objects the braiding fulfills 
$\gbeg{2}{1}
\gbr \gnl
\gend=
\gbeg{2}{1}
\gibr \gnl
\gend$), then it can be shown that \equref{braided ab} is equivalent to \equref{other YD}. 
\begin{center} 
\begin{tabular}{p{6.2cm}p{2cm}p{6.2cm}}
\begin{equation} \eqlabel{braided ab}
\gbeg{3}{9}
\got{2}{F} \got{1}{X}  \gnl
\gcmu \gcl{1} \gnl
\gcl{1} \gbr \gnl
\glm \gcl{1} \gnl
\gvac{1} \gcl{1} \gbmp{\beta} \gnl
\glcm \gcl{1} \gnl
\gcl{1} \gbr \gnl
\gmu \gcl{1} \gnl
\gob{2}{F} \gob{1}{X} 
\gend=
\gbeg{3}{5}
\got{2}{F} \gvac{1} \got{1}{X}  \gnl
\gcmu \glcm \gnl
\gbmp{\alpha} \gbr \gcl{1} \gnl
\gmu \glm  \gnl
\gob{1}{\hspace{0,28cm} F} \gvac{2} \gob{1}{X} 
\gend
\end{equation} & & \vspace{-0,6cm}
\begin{equation} \eqlabel{other YD}
\gbeg{4}{8}
\got{1}{F} \got{5}{X} \gnl
\gcn{2}{2}{1}{5} \gvac{1} \gcl{2} \gnl \gnl
\gvac{2} \glm \gnl
\gvac{2} \glcm \gnl
\gcn{2}{2}{5}{1} \gvac{1} \gcl{2} \gnl \gnl
\gob{1}{F} \gob{5}{X} 
\gend=
\gbeg{6}{9}
\gvac{1} \got{2}{F} \got{4}{X} \gnl
\gvac{1} \gcmu \gvac{1} \gcn{1}{1}{2}{2} \gnl
\gvac{1} \gcn{1}{1}{1}{0} \hspace{-0,21cm} \gcmu \glcm \gnl
\gvac{1} \gcl{1} \gbr \gcl{1} \gcl{4} \gnl
\gvac{1} \gbmp{\alpha} \gbmp{\s\beta^{-1}} \gbr \gnl
\gvac{1} \gcl{1} \gbr \gcl{1} \gnl
\gvac{1} \gcn{1}{1}{1}{2} \gmu \glm \gnl
\gvac{2} \hspace{-0,21cm} \gmu \gcn{1}{1}{4}{4} \gnl
\gvac{2} \gob{2}{F} \gob{4}{X} 
\gend
\end{equation}
\end{tabular}
\end{center} 
Then for $\alpha=\id_F$ the equation \equref{other YD} is precisely the identity (4.9) from \cite{F2}. Therein $F$ is a Hopf algebra in $\C$, hence $\beta^{-1}=S\beta$, where $S$ is the 
antipode of $F$, and the braiding satisfies the above symmetricity conditions. In \cite{F2} we constructed an object $F_{\beta}$ which satisfies \equref{other YD} with $X=F_{\beta}$ (and 
$\alpha=\id_F$) and we proved that there is an anti-group homomorphism  $\Aut(F)\to\BQ(\C;F), \beta\mapsto\End(F_\beta)$ from the Hopf automorphism group to the quantum Brauer group of $F$. 
This generalizes an anti-group homomorphism from \cite{CVZ} constructed in vector spaces, and the other results we developed in \cite{F2} generalized to braided monoidal categories 
the ones from \cite{VZ4}. 
\end{ex}

\begin{ex}
In the particular case when the braided monoidal category $\C$ from \exref{brC} is the category of vector spaces over a field $k$, bimonads in $\dul{\C}$ 
are bialgebras over $k$ and the underlying 2-category of the Turaev 2-category is the monoidal category of the classical Yetter-Drinfel`d modules over $k$. 
Our Turaev 2-category $\Bimnd^T(\dul{\C})$ recovers the Turaev category consisting of generalized Yetter-Drinfel`d modules constructed in \cite{PS}. 
\end{ex}

\begin{ex}
As indicated in \cite{PS}, in the context of the latter example, when $F$ is a Hopf algebra over $k$ and $(\alpha,\beta)=(S^2,\id_H)$, where $S$ is the antipode 
of $H$, a generalized $(\alpha, \beta)$-Yetter-Drinfeld module is an anti-Yetter-Drinfeld module introduced in \cite{H1, H2}. Anti-Yetter-Drinfeld modules 
emerged as coefficients for the cyclic cohomology of Hopf algebras, which was introduced by Connes and Moscovici in \cite{CM}. 
\end{ex}

\subsection{Pairs in involution}

Let $(\A,F)$ and $(\A',F\s')$ be bimonads in $\K$.
The identity 1-cell $\Id_{\A}$ is trivially a monad and a comonad 
and we can consider the monads of the 2-cells $F\to\Id_{\A}$ and $\Id_{\A}\to F$ in $\K$, which are indeed convolution algebras in the monoidal category $\K(\A,\A)$. 
We will denote by $*$ the convolution product. 
Let $(\Id_\A,f):(\A, F\s')\to(\A,\Id_\A)$ be a 1-cell in $\Mnd(\K)$ and $(\Id_\A,g):(\A, \Id_\A)\to(\A,F\s')$ a 1-cell in $\Comnd(\K)$, so that $f$ and $g$ are 
convolution invertible in their respective convolution algebras in $\K(\A,\A)$. Recall that the former mean that $f$ satisfies \equref{alg m} and $g$ 
satisfies \equref{coalg m}. 

Suppose that $G\iso G'$, take $\alpha\in G', \beta\in G$ and set $\tilde\beta:=j\beta j^{-1}\in G(F\s')$. 
We say that $(f,g)$ is a {\em pair in involution corresponding to $(\alpha, \beta)$} if 
$$\alpha=
g^{-1}f*\tilde\beta* gf^{-1} \quad\Leftrightarrow\quad \alpha*g^{-1}f=g^{-1}f*\tilde\beta 
\quad\Leftrightarrow\quad gf^{-1}*\alpha=\tilde\beta* gf^{-1}$$
holds. In string diagrams these conditions look as follows: 
$$
\gbeg{4}{8}
\gvac{1} \got{1}{F\s'} \gnl
\gwcm{3} \gnl
\gmp{f} \gvac{1} \hspace{-0,34cm} \gcmu \gnl
\gvac{1} \gvac{1} \gbmp{\beta}  \gvac{1} \hspace{-0,34cm}  \gelt{\hspace{0,14cm}f^{-1}} \gnl 
\gvac{1} \gelt{\hspace{0,14cm}g^{-1}} \gcn{1}{1}{2}{1}  \gnl
\gvac{1} \gmu \gvac{1} \hspace{-0,2cm} \gmp{g} \gnl
\gvac{2} \gwmu{3} \gnl
\gvac{3} \gob{1}{F\s'} 
\gend \qquad\Leftrightarrow\qquad 
\gbeg{4}{7}
\gvac{1} \got{1}{F\s'} \gnl
\gwcm{3} \gnl
\gcl{1}  \gvac{1} \gmp{f} \gnl
\gbmp{\alpha} \gnl
\gcl{1} \gvac{1} \gelt{\hspace{0,14cm}g^{-1}} \gnl
\gwmu{3} \gnl
\gvac{1} \gob{1}{F\s'} 
\gend \qquad\Leftrightarrow\qquad 
\gbeg{4}{7}
\gvac{1} \got{1}{F\s'} \gnl
\gwcm{3} \gnl
\gmp{f} \gvac{1} \gcl{1} \gnl
\gvac{2} \gbmp{\beta} \gnl
\gelt{\hspace{0,14cm}g^{-1}} \gvac{1} \gcl{1} \gnl
\gwmu{3} \gnl
\gvac{1} \gob{1}{F\s'} 
\gend
$$

The following is obvious: 

\begin{lma} \lelabel{obv pair}
Being $\Epsilon$ and $\eta$ the counit and the unit 2-cells of a bimonad $F$ in $\K$, the pair $(\Epsilon, \eta)$ is a pair in involution corresponding to 
$(\alpha, \alpha)$ for every $\alpha\in G$. 
\end{lma}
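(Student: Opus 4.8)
The plan is to observe that with $f=\Epsilon$ and $g=\eta$ the data chosen in the definition of a pair in involution are precisely the convolution units of the relevant convolution algebras, so that the defining relation degenerates to a triviality. I set $F\s'=F$ and $G\s'=G$, and take the fusion map to be $j=\id_G$ (this is forced by property~1 of the fusion maps from \ssref{}), whence $\tilde\beta=j\beta j^{-1}=\beta$.

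First I would check that $(\Epsilon,\eta)$ is admissible data, i.e. that $(\Id_\A,\Epsilon)$ is a $1$-cell in $\Mnd(\K)$, that $(\Id_\A,\eta)$ is a $1$-cell in $\Comnd(\K)$, and that both are convolution invertible. The first two are immediate from the bimonad axioms \equref{bimonad}: the multiplicativity and unitality of $\Epsilon$ (the second and fourth identities in \equref{bimonad}) are exactly conditions \equref{alg m} applied to $\Epsilon$, while the comultiplicativity and counitality of $\eta$ (the third and fourth identities in \equref{bimonad}) are exactly conditions \equref{coalg m} applied to $\eta$.

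Next I would identify the convolution units. Since the monad and comonad structures on $\Id_\A$ are trivial, the unit of the convolution algebra of $2$-cells $F\to\Id_\A$ is $\eta_{\Id_\A}\comp\Epsilon=\Epsilon$ (using $\eta_{\Id_\A}=\id$), and the unit of the convolution algebra of $2$-cells $\Id_\A\to F$ is $\eta\comp\Epsilon_{\Id_\A}=\eta$. Hence $\Epsilon$ and $\eta$ are convolution invertible with $f^{-1}=\Epsilon=f$ and $g^{-1}=\eta=g$, which settles admissibility. Moreover both composites occurring in the defining relation coincide: $g^{-1}f=\eta\comp\Epsilon=gf^{-1}$, and $\eta\comp\Epsilon$ is exactly the unit $e$ of the convolution algebra $\End(F)$ of $2$-cells $F\to F$.

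Finally I would substitute $\beta=\alpha$, so $\tilde\beta=\alpha$, into the defining relation: the right-hand side becomes $g^{-1}f*\tilde\beta*gf^{-1}=e*\alpha*e=\alpha$, using that $e$ is a two-sided unit for the convolution product; the two equivalent reformulations $\alpha*g^{-1}f=g^{-1}f*\tilde\beta$ and $gf^{-1}*\alpha=\tilde\beta*gf^{-1}$ collapse to $\alpha=\alpha$ in the same way. I do not anticipate a genuine obstacle here; the only care needed is bookkeeping across the three convolution algebras (with source or target equal to $F$ or to $\Id_\A$), making sure that each unit is read off as the composite of the unit of the target monad with the counit of the source comonad, and that the two-sided unit property $e*x=x=x*e$ is invoked through the counit axiom $(\Epsilon\ot\id)\Delta=\id=(\id\ot\Epsilon)\Delta$ and the unit axiom $\mu(\eta\ot\id)=\id=\mu(\id\ot\eta)$.
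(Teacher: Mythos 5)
Your argument is correct and is exactly the computation the paper leaves implicit (the lemma is stated with ``The following is obvious'' and no written proof): $\Epsilon$ and $\eta$ are the convolution units of their respective algebras, so $g^{-1}f=\eta\comp\Epsilon=gf^{-1}$ is the unit $e$ of the convolution algebra of $2$-cells $F\to F$, $\tilde\beta=\beta$ since $j_{F,F}=\id_{G}$, and the defining relation reduces to $\alpha=e*\alpha*e=\alpha$. Your identification of the admissibility conditions for $(\Id_\A,\Epsilon)$ and $(\Id_\A,\eta)$ with the second through fourth identities of \equref{bimonad} is also the right bookkeeping, so nothing is missing.
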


\medskip

The above definition is a 2-categorical analogy of the concept of {\em modular pair in involution} due to Connes and Moscovici.

\begin{lma} 
Let $(\A,F)$ be a monad and a comonad, $(F, \tau_{F,F})$ a 1-cell in $\Mnd(\K)$ and $\Comnd(\K)$ and let $\alpha\in\Aut_0(F)$. Then $(F, \lambda_\alpha)$, 
with $\lambda_\alpha$ defined by
\begin{equation} \eqlabel{lambda_alfa}
\lambda_\alpha=
\gbeg{4}{5}
\gvac{1} \got{1}{F} \got{3}{F} \gnl
\gwcm{3} \gcl{1} \gnl
\gbmp{\alpha}  \gvac{1} \glmptb \gnot{\hspace{-0,34cm}\tau_{F,F}} \grmptb \gnl
\gwmu{3} \gcl{1} \gnl
\gvac{1} \gob{1}{F} \gvac{1} \gob{2}{\hspace{-0,34cm}F}
\gend
\end{equation}
is a 1-cell both in $\Mnd(\K)$ and $\Comnd(\K)$ satisfying \equref{lambda alfa-beta}. 
\end{lma}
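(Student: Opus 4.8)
The plan is to check, one after another, the two conditions of \equref{monadic d.l.} (taken with $\psi=\lambda_\alpha$ and $F\s'=X=F$), the two conditions of \equref{comonadic d.l.} (with $\phi=\lambda_\alpha$), and finally the relation \equref{lambda alfa-beta}; throughout I write $\lambda=\lambda_e$ for the value of \equref{lambda_alfa} at $\alpha=e$, so that $\lambda_\alpha=(\mu\otimes F)\comp(\alpha\otimes\tau)\comp(\Delta\otimes F)$ with $\tau:=\tau_{F,F}$. First I would unpack the hypothesis that $\tau$ is a $1$-cell in $\Mnd(\K)$ and in $\Comnd(\K)$ into four working identities: from \equref{monadic d.l.},
$$\tau\comp(\mu\otimes F)=(F\otimes\mu)\comp(\tau\otimes F)\comp(F\otimes\tau),\qquad \tau\comp(\eta\otimes F)=F\otimes\eta,$$
and dually, from \equref{comonadic d.l.},
$$(\Delta\otimes F)\comp\tau=(F\otimes\tau)\comp(\tau\otimes F)\comp(F\otimes\Delta),\qquad (\Epsilon\otimes F)\comp\tau=F\otimes\Epsilon.$$
I would also record that $\alpha\in\Aut_0(F)$ means $\alpha$ is invertible and, by \equref{alg m} and \equref{coalg m}, satisfies $\alpha\mu=\mu(\alpha\otimes\alpha)$, $\alpha\eta=\eta$, $\Delta\alpha=(\alpha\otimes\alpha)\Delta$, $\Epsilon\alpha=\Epsilon$, and that $\alpha$ slides through $\tau$ by naturality of the braiding $\tau_{F,F}$ (as in \exref{brC}), i.e. $\tau\comp(\alpha\otimes F)=(F\otimes\alpha)\comp\tau$ and symmetrically. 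These, together with the (co)associativity, (co)unit and bimonad compatibilities \equref{bimonad} of $F$, are the only tools I expect to use.

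Next I would dispatch the two easy conditions, namely the unit law of \equref{monadic d.l.} and the counit law of \equref{comonadic d.l.}. Plugging $\eta$ into the comultiplied slot of $\lambda_\alpha$ and using $\Delta\eta=\eta\otimes\eta$ (from \equref{bimonad}), then $\alpha\eta=\eta$, then $\tau\comp(\eta\otimes F)=F\otimes\eta$, and finally the monad unit law, collapses the expression to $F\otimes\eta$, which is exactly $\lambda_\alpha\comp(\eta\otimes F)=F\otimes\eta$. The counit condition $(\Epsilon\otimes F)\comp\lambda_\alpha=F\otimes\Epsilon$ is the mirror computation, using $\Epsilon\mu=\Epsilon\otimes\Epsilon$, $\Epsilon\alpha=\Epsilon$, the fourth $\tau$-identity above, and the comonad counit law.

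The heart of the proof is the multiplicativity condition of \equref{monadic d.l.} and its dual in \equref{comonadic d.l.}. For multiplicativity I would expand the left-hand side $\lambda_\alpha\comp(\mu\otimes F)=(\mu\otimes F)\comp(\alpha\otimes\tau)\comp((\Delta\mu)\otimes F)$ and rewrite $\Delta\mu$ through $\lambda$ by the bimonad compatibility \equref{bimonad}; then I would repeatedly apply the first $\tau$-identity (the hexagon for $\mu$), associativity and coassociativity of $F$, and $\alpha\mu=\mu(\alpha\otimes\alpha)$, so as to reach the expanded form of the right-hand side $(F\otimes\mu)\comp(\lambda_\alpha\otimes F)\comp(F\otimes\lambda_\alpha)$. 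The comultiplicativity condition is handled symmetrically, with the $\tau$-identity for $\Delta$, the compatibility \equref{bimonad}, and $\Delta\alpha=(\alpha\otimes\alpha)\Delta$. This is precisely the $\alpha$-twisted, $2$-categorical version of the classical fact that the canonical map of a braided bialgebra is a distributive law, computed as in \exref{brC}; the main obstacle I anticipate is exactly the length and bookkeeping of this step, especially the repeated reshuffling of the three strands across $\tau$ and the need to have the bialgebra-type compatibility of $F$ available.

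Finally, for \equref{lambda alfa-beta} I would start from the right-hand side $(\alpha\otimes\beta^{-1})\comp\lambda\comp(\beta\otimes\alpha^{-1})$ and slide the four automorphisms inward: $\beta$ on the left input passes through $\Delta$ by the coalgebra-morphism law, through $\tau$ by naturality, and through $\mu$ by the algebra-morphism law; $\alpha^{-1}$ on the right input likewise crosses $\tau$ by naturality; the $\alpha$ and $\beta^{-1}$ on the outputs are then absorbed via $\alpha\mu=\mu(\alpha\otimes\alpha)$ and $\beta^{-1}\beta=\id$. Collecting the surviving automorphism on the comultiplied slot yields a single composite which, by \equref{lambda_alfa}, is $\lambda_{\beta\alpha}$; the special cases $\beta=e$ and $\alpha=e$ then give \equref{l1} and \equref{l2} for free. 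The subtle point here, which I would track carefully, is the order in which the automorphisms accumulate as they cross the braiding $\tau$: the left-to-right/right-to-left convention mismatch between the monoidal tensor and the composition of $1$-cells, noted before \exref{brC}, is what makes the conjugation land as $\beta\alpha$ rather than $\alpha\beta$.
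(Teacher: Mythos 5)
Your overall plan is the right one and is essentially the ``direct'' verification the paper has in mind: the paper's own proof consists of one sentence citing \cite{Femic6} for the case $\alpha=e$ and declaring the general case direct, so your writeup is considerably more explicit than the original. The four working identities you extract from \equref{monadic d.l.} and \equref{comonadic d.l.}, the unit/counit checks, and the strategy of reducing \equref{lambda alfa-beta} to sliding automorphisms through $\Delta$, $\tau_{F,F}$ and $\mu$ are all as intended. Do note, however, that already your unit-law check uses $\Delta\comp\eta=\eta\times\eta$ and your multiplicativity step uses the full compatibility \equref{bimonad}; neither follows from the stated hypothesis that $(\A,F)$ is merely ``a monad and a comonad''. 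You at least list \equref{bimonad} among your tools, which makes the extra assumption visible, and in context (the Lemma is applied to bimonads whose structure 2-cell is $\lambda_e$) it is harmless — but it is an added hypothesis, not a consequence of the statement as written.

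The genuine gap is the step where you let $\alpha$ and $\beta$ ``slide through $\tau$ by naturality of the braiding $\tau_{F,F}$ (as in \exref{brC})''. In the setting of the Lemma $\tau_{F,F}$ is not a braiding: it is an arbitrary 2-cell $FF\to FF$ subject only to \equref{monadic d.l.} and \equref{comonadic d.l.}, and 2-cells in a 2-category do not commute with one another for free. The identities $\tau_{F,F}\comp(\alpha\times F)=(F\times\alpha)\comp\tau_{F,F}$ and its mirror are not consequences of $\alpha\in\Aut_0(F)$, which only gives \equref{alg m}, \equref{coalg m} and \equref{YD cond for alfa}; and your derivation of \equref{lambda alfa-beta} — hence also of \equref{l1} and \equref{l2} — collapses without them. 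Indeed, after pushing $\beta$ through $\Delta$ and absorbing $\alpha,\beta^{-1}$ into $\mu$ as you describe, the residual obligation (modulo the composition-order convention you already flag) is precisely $(\alpha\times\beta^{-1})\comp\tau_{F,F}\comp(\beta\times\alpha^{-1})=\tau_{F,F}$. That such commutations are honest extra hypotheses rather than automatic facts is confirmed by the paper itself: the closing Proposition of the same subsection has to impose the analogous conditions \equref{nat beta}--\equref{nat f} explicitly. So you should either add the commutation of $\tau_{F,F}$ with the relevant 0-automorphisms as a hypothesis (it does hold in the motivating braided example \exref{brC}, by naturality of the braiding) or derive it in the situation where the Lemma is actually invoked; as written, this step would fail for a general $\tau_{F,F}$.
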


\begin{proof}
We observed in \cite{Femic6} that $(F, \lambda_e)$ is a 1-cell both in $\Mnd(\K)$ and $\Comnd(\K)$, here $e=id_F$. The proof of the claim for general $\alpha$ is direct. 
\qed\end{proof}

Assuming that for all $\alpha\in G'$ the 2-cells $\lambda_\alpha'$ are of the form \equref{lambda_alfa}, we have the following identities (where $\beta\in G$ and $\tilde\beta\in G'$ 
are as above): 
\begin{equation} \eqlabel{lambda beta calculo}
\gbeg{3}{7}
\got{1}{F\s'} \got{2}{} \got{1}{\hspace{-0,32cm}F\s'} \gnl
\gcl{2}  \gvac{1} \hspace{-0,34cm} \gmp{g} \gcl{1} \gnl
\gvac{2} \gmu \gnl
\gvac{1} \hspace{-0,2cm} \glmptb \gnot{\lambda_{\tilde\beta}'} \gcmp \grmptb \gnl
\gvac{1} \gcl{2} \gvac{1} \hspace{-0,34cm} \gcmu \gnl
\gvac{3} \gelt{\hspace{0,14cm}f^{-1}} \gcl{1} \gnl
\gvac{1} \gob{1}{\hspace{0,32cm}F\s'} \gvac{2} \gob{1}{F\s'} \gnl
\gend=
\gbeg{5}{8}
\gvac{1} \got{2}{F\s'} \got{2}{} \got{1}{\hspace{-0,32cm}F\s'} \gnl
\gvac{1} \gcn{2}{1}{2}{2}  \gvac{1} \hspace{-0,34cm} \gmp{g} \gcl{1} \gnl
\gvac{1} \gwcm{3} \gmu \gnl
\gvac{1} \gbmp{\tilde\beta} \gvac{1} \gcl{1} \gcn{1}{1}{2}{1}  \gnl
\gvac{1} \gcl{1} \gvac{1} \glmptb \gnot{\hspace{-0,34cm}\tau_{F\s',F\s'}} \grmptb \gnl
\gvac{1} \gwmu{3} \hspace{-0,22cm} \gcmu \gnl
\gvac{2} \gcn{2}{1}{2}{2} \gelt{\hspace{0,14cm}f^{-1}} \gcl{1} \gnl
\gvac{2} \gob{1}{\hspace{0,32cm}F\s'} \gvac{2} \gob{1}{F\s'} \gnl
\gend\stackrel{d.l.}{\stackrel{coass.}{\stackrel{ass.}{=}}}
\gbeg{5}{9}
\gvac{2} \got{1}{F\s'} \got{3}{F\s'} \gnl
\gvac{1} \gwcm{3} \gcl{3} \gnl
\gwcm{3} \gcl{2} \gnl
\gcl{1}  \gvac{1} \gelt{\hspace{0,14cm}f^{-1}} \gnl
\gbmp{\tilde\beta} \gvac{2} \glmptb \gnot{\hspace{-0,34cm}\tau_{F\s',F\s'}} \grmptb \gnl
\gcl{1} \gvac{1} \gmp{g} \gcl{2} \gcl{3} \gnl
\gwmu{3} \gnl
\gvac{1} \gwmu{3} \gnl
\gvac{2} \gob{1}{F\s'} \gob{3}{F\s'} 
\gend=\lambda_{\tilde\beta\s * gf^{-1}}'
\end{equation}
and similarly 
\begin{equation} \eqlabel{lambda alfa calculo}
\gbeg{4}{7}
\got{2}{F\s'} \got{1}{F\s'} \gnl
\gcmu  \gcl{2} \gnl
\gmp{f} \gcl{1} \gnl
\gvac{1} \glmptb \gnot{\hspace{-0,34cm}\lambda_{\tilde\beta}'} \grmptb \gnl
\gelt{\hspace{0,14cm}g^{-1}} \gcl{1} \gcl{2} \gnl
\gmu \gnl
\gob{2}{\hspace{0,32cm}F\s'} \gob{1}{F\s'} \gnl
\gend=
\gbeg{5}{7}
\gvac{1} \got{2}{F\s'} \got{1}{} \got{1}{F\s'} \gnl
\gwcm{3} \gcn{1}{2}{3}{3} \gnl
\gmp{f} \gwcm{3}  \gnl
\gvac{1} \gbmp{\tilde\beta}  \gvac{1} \glmptb \gnot{\hspace{-0,34cm}\tau_{F\s',F\s'}} \grmptb\gnl
\gelt{\hspace{0,14cm}g^{-1}} \gwmu{3} \gcl{2} \gnl
\gwmu{3}  \gnl
\gvac{1} \gob{1}{\hspace{0,32cm}F\s'} \gvac{2} \gob{1}{F\s'} \gnl
\gend\stackrel{coass.}{\stackrel{ass.}{=}}
\gbeg{5}{9}
\gvac{2} \got{1}{F\s'} \got{3}{F\s'} \gnl
\gvac{1} \gwcm{3} \gcl{3} \gnl
\gwcm{3} \gcl{2} \gnl
\gmp{f} \gvac{1} \gcl{1} \gnl
\gvac{2} \gbmp{\beta} \glmptb \gnot{\hspace{-0,34cm}\tau_{F\s',F\s'}} \grmptb \gnl
\gelt{\hspace{0,14cm}g^{-1}} \gvac{1} \gcl{1} \gcl{2} \gcl{3} \gnl
\gwmu{3} \gnl
\gvac{1} \gwmu{3} \gnl
\gvac{2} \gob{1}{F\s'} \gob{3}{F\s'} 
\gend=\lambda_{g^{-1}f*\tilde\beta}'=\lambda_{\alpha\s*g^{-1}f}' \stackrel{\equref{lambda beta calculo}}{=}
\gbeg{3}{7}
\got{1}{F\s'} \got{2}{} \got{1}{\hspace{-0,32cm}F\s'} \gnl
\gcl{2}  \gvac{1} \hspace{-0,34cm} \gelt{\hspace{0,14cm}g^{-1}} \gcl{1} \gnl
\gvac{2} \gmu \gnl
\gvac{1} \hspace{-0,2cm} \glmptb \gnot{\lambda_{\alpha}'} \gcmp \grmptb \gnl
\gvac{1} \gcl{2} \gvac{1} \hspace{-0,34cm} \gcmu \gnl
\gvac{3} \gmp{f} \gcl{1} \gnl
\gvac{1} \gob{1}{\hspace{0,32cm}F\s'} \gvac{2} \gob{1}{F\s'} \gnl
\gend
\end{equation}

\begin{thm}
Let $F$ and $F\s'$ be bimonads in $\K$ with isomorphic 0-automorphism groups $G$ and $G'$, respectively, let $\alpha\in G', \beta\in G$ and set 
$\tilde\beta:=j\beta j^{-1}\in G'$. Suppose that $(f,g)$ is a pair in involution corresponding to $(\alpha, \beta)$, then there is an 
isomorphism of categories 
$$\Bimnd^T(F_\beta, F\s'_\alpha)\iso\Bimnd(F, F\s').$$
\end{thm}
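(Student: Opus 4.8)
The plan is to keep the underlying 1-cell $X$ fixed and to build mutually inverse functors between $\Bimnd^T(F_\beta, F\s'_\alpha)$ and $\Bimnd(F, F\s')$ that only modify the structure 2-cells $(\psi_X,\phi_X)$. Concretely, I would send an object $(X, \psi_X, \phi_X)\in\Bimnd^T(F_\beta, F\s'_\alpha)$ to a triple $(X, \widetilde\psi_X, \widetilde\phi_X)\in\Bimnd(F, F\s')$, where the new 2-cells arise from $\psi_X,\phi_X$ by a twist with two ingredients: first, automorphism twists of the type \equref{pregled psi-phi twisted} which, via the fusion map $j$, transport the $F$-side automorphism $\beta$ to the $F\s'$-side automorphism $\tilde\beta=j\beta j^{-1}$ (this is exactly the mechanism of \leref{twisted psi's} and \equref{G-twisted}--\equref{G'-twisted}); and second, convolution twists by $f$ and $g$ on the $F\s'$-leg, which is the only place where $f\colon F\s'\to\Id_\A$ and $g\colon\Id_\A\to F\s'$ can act. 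The inverse functor is given by the same recipe with the convolution inverses $f^{-1},g^{-1}$ together with $\beta^{-1}$ and $\tilde\beta^{-1}$.

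First I would check well-definedness on objects. Since $f$ is a 1-cell in $\Mnd(\K)$ it satisfies \equref{alg m} and $g$ is a 1-cell in $\Comnd(\K)$ so it satisfies \equref{coalg m}; combining this with the fact from \leref{twisted psi's} that the automorphism twists preserve the property of being a 1-cell in $\Mnd(\K)$, respectively $\Comnd(\K)$, gives that $(X, \widetilde\psi_X)$ is again a 1-cell in $\Mnd(\K)$ and $(X, \widetilde\phi_X)$ again one in $\Comnd(\K)$. The essential point is to verify that $(X,\widetilde\psi_X,\widetilde\phi_X)$ satisfies the untwisted compatibility \equref{psi-lambda-phi 2-bimonad} (with $\lambda_e$ and $\lambda'_e$), starting from the twisted one \equref{twisted YD cond} (with $\lambda_\beta$ and $\lambda'_\alpha$) for the source object. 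Here I would proceed in two moves: use the fusion transport to replace the $F$-side occurrence of $\lambda_\beta$ by an $F\s'$-side occurrence of $\lambda'_{\tilde\beta}$ across $\psi_X$, and then invoke the pair-in-involution identities \equref{lambda beta calculo} and \equref{lambda alfa calculo}. These are precisely designed so that, using the involution equation $\alpha*g^{-1}f=g^{-1}f*\tilde\beta$ (equivalently $gf^{-1}*\alpha=\tilde\beta*gf^{-1}$), one may trade $\lambda'_\alpha$ for $\lambda'_{\tilde\beta}$ at the cost of exactly the $f,g$-insertions that constitute the twist defining $\widetilde\phi_X$; carrying out the corresponding manipulation on the other side then turns both sides of \equref{twisted YD cond} into the two sides of \equref{psi-lambda-phi 2-bimonad}.

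Finally I would verify functoriality and invertibility. On morphisms the functor acts as the identity on the underlying 2-cell $\zeta$: since the twist is performed by the fixed 2-cells $f,g$ and fixed automorphisms, naturality of the twist in $X$ guarantees that a $\zeta$ compatible with $(\psi_X,\phi_X)$ and $(\psi_Y,\phi_Y)$ is automatically compatible with the twisted structure maps, so composition and identities are preserved. That the two functors are mutually inverse reduces to the convolution identities $f*f^{-1}=\Epsilon$ and $g*g^{-1}=\eta$ together with the group inverse laws for $\beta$ and $\tilde\beta$, which collapse the composite twist to the identity. I expect the main obstacle to be the Yetter--Drinfel`d verification in the middle paragraph: the bookkeeping that makes the fusion transport of the $\beta$-twist and the pair-in-involution swap of $\lambda'_\alpha$ for $\lambda'_{\tilde\beta}$ fit together so that precisely the $f,g$-twist defining the functor is recovered is where all the hypotheses — convolution invertibility of $f$ and $g$, the isomorphism $G\iso G'$ through $j$, and the involution equation — are used simultaneously.
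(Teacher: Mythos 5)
Your proposal coincides with the paper's own proof: the functors $\F$ and $\G$ there are defined exactly as you describe, fixing the underlying 1-cell and twisting $\psi$ and $\phi$ by $f^{\pm1}$, $g^{\pm1}$, $\beta^{\pm1}$ and $\tilde\beta^{\pm1}$, with mutual inverseness following from the convolution and group inverse laws. The Yetter--Drinfel`d verification is likewise carried out by transporting $\lambda_\beta$ to $\lambda'_{\tilde\beta}$ across $\psi$ and then applying \equref{lambda beta calculo} and \equref{lambda alfa calculo}, which encode the involution equation precisely as you anticipate.
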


\begin{proof}
We define the functors $\F: \Bimnd^T(F_\beta, F\s'_\alpha)\to\Bimnd(F, F\s')$ and  $\G:\Bimnd(F, F\s')\to\Bimnd^T(F_\beta, F\s'_\alpha)$ as follows. 
For $(X,\psi_X,\phi_X)\in\Bimnd^T(F_\beta, F\s'_\alpha)$ set $\F\left((X,\psi_X,\phi_X)\right)=(\crta X,\psi_{\crta X},\phi_{\crta X})$, and for 
for $(Y,\psi_Y,\phi_Y)\in\Bimnd(F, F\s')$ set $\G\left((Y,\psi_Y,\phi_Y)\right)=(\dcr Y,\psi_{\dcr Y},\phi_{\dcr Y})$, where $\crta X=X, \dcr Y=Y$ and 
$\psi_{\crta X}, \phi_{\crta X}, \psi_{\dcr Y}$ and $\phi_{\dcr Y}$ are given as below: 
$$
\psi_{\crta X}=
\gbeg{5}{6}
\gvac{1} \got{1}{F\s'} \got{3}{X} \gnl
\glmp \gnot{\hspace{-0,34cm}\tilde\beta^{-1}} \grmptb \gvac{1} \gcl{2} \gnl
\gwcm{3} \gnl
\gelt{\hspace{0,14cm}f^{-1}} \gvac{1} \glmptb \gnot{\hspace{-0,34cm}\psi_X} \grmptb \gnl 
\gvac{2} \gcl{1} \gbmp{\beta} \gnl
\gvac{2} \gob{1}{X} \gob{1}{F}
\gend, \qquad
\phi_{\crta X}=
\gbeg{4}{4}
\gvac{1} \got{1}{X} \got{1}{F} \gnl
\gmp{g} \glmptb \gnot{\hspace{-0,34cm}\phi_X} \grmptb \gnl
\gmu \gcl{1} \gnl
\gob{2}{F\s'} \gob{1}{X}
\gend, \qquad
\psi_{\dcr Y}=
\gbeg{5}{6}
 \got{2}{F\s'} \got{1}{Y} \gnl
\gcmu \gcl{2} \gnl
\gmp{f}  \gbmp{\tilde\beta} \gnl
\gvac{1} \glmptb \gnot{\hspace{-0,34cm}\psi_Y} \grmptb \gnl 
\gvac{1} \gcl{1} \glmptb \gnot{\hspace{-0,34cm}\beta^{-1}} \grmp \gnl
\gvac{1} \gob{1}{Y} \gob{1}{F} 
\gend, \qquad
\phi_{\dcr Y}=
\gbeg{4}{4}
\gvac{1} \got{1}{Y} \got{1}{F} \gnl
\gelt{g^{-1}} \glmptb \gnot{\hspace{-0,34cm}\phi_Y} \grmptb \gnl
\gmu \gcl{1} \gnl
\gob{2}{F\s'} \gob{1}{Y}
\gend.$$
Once we prove that the functors $\F$ and $\G$ are well-defined it is easily seen that they are inverse of each other. Take
$(X,\psi_X,\phi_X)\in\Bimnd^T(F_\beta, F\s'_\alpha)$, let us prove the strong Yetter-Drinfel`d condition for 
$(\crta X,\psi_{\crta X},\phi_{\crta X})$:
$$
\gbeg{3}{5}
\got{1}{F\s'} \got{1}{\crta X} \got{1}{F} \gnl
\glmptb \gnot{\hspace{-0,34cm}\psi_{\crta X}} \grmptb \gcl{1} \gnl
\gcl{1} \glmptb \gnot{\hspace{-0,34cm}\lambda} \grmptb \gnl
\glmptb \gnot{\hspace{-0,34cm}\phi_{\crta X}} \grmptb \gcl{1} \gnl
\gob{1}{F\s'} \gob{1}{\crta X} \gob{1}{F}
\gend=
\gbeg{5}{9}
\gvac{1} \got{1}{F\s'} \gvac{1} \got{1}{X} \got{1}{F} \gnl
\glmp \gnot{\hspace{-0,34cm}\tilde\beta^{-1}} \grmptb \gvac{1} \gcl{2} \gcl{4} \gnl
\gwcm{3} \gnl
\gelt{\hspace{0,14cm}f^{-1}} \gvac{1} \glmptb \gnot{\hspace{-0,34cm}\psi_X} \grmptb \gnl 
\gvac{2} \gcl{2} \gbmp{\beta} \gnl
\gvac{3} \glmptb \gnot{\hspace{-0,34cm}\lambda} \grmptb \gnl
\gvac{1} \gmp{g} \glmptb \gnot{\hspace{-0,34cm}\phi_X} \grmptb \gcl{2} \gnl
\gvac{1} \gmu \gcl{1} \gnl
\gvac{1} \gob{2}{F\s'} \gob{1}{X} \gob{1}{F}
\gend\stackrel{\equref{l5}}{=}
\gbeg{6}{9}
\gvac{1} \got{1}{F\s'} \gvac{1} \got{1}{X} \got{1}{F} \gnl
\glmp \gnot{\hspace{-0,34cm}\tilde\beta^{-1}} \grmptb \gvac{1} \gcl{2} \gcl{3} \gnl
\gwcm{3} \gnl
\gelt{\hspace{0,14cm}f^{-1}} \gvac{1} \glmptb \gnot{\hspace{-0,34cm}\psi_X} \grmptb \gnl 
\gvac{2} \gcl{2} \glmptb \gnot{\hspace{-0,34cm}\lambda_\beta} \grmptb \gnl
\gvac{3} \gcl{1} \gbmp{\beta} \gnl
\gvac{1} \gmp{g} \glmptb \gnot{\hspace{-0,34cm}\phi_X} \grmptb \gcl{2} \gnl
\gvac{1} \gmu \gcl{1} \gnl
\gvac{1} \gob{2}{F\s'} \gob{1}{X} \gob{1}{F}
\gend\stackrel{\equref{twisted YD cond}}{=}
\gbeg{5}{7}
\gvac{1} \got{1}{F\s'} \gvac{1} \got{1}{X} \got{1}{F} \gnl
\glmp \gnot{\hspace{-0,34cm}\tilde\beta^{-1}} \grmptb \gvac{1} \gcl{1} \gcl{1} \gnl
\gwcm{3} \glmptb \gnot{\hspace{-0,34cm}\phi_X} \grmptb \gnl
\gelt{\hspace{0,14cm}f^{-1}} \gvac{1} \glmptb \gnot{\hspace{-0,34cm}\lambda_\alpha'} \grmptb \gcl{1} \gnl 
\gvac{1} \gmp{g} \gcl{1} \glmptb \gnot{\hspace{-0,34cm}\psi_X} \grmptb \gnl
\gvac{1} \gmu \gcl{1} \gbmp{\beta} \gnl
\gvac{1} \gob{2}{F\s'} \gob{1}{X} \gob{1}{F}
\gend\stackrel{*}{=}
\gbeg{5}{9}
\got{1}{F\s'} \gvac{1} \got{1}{X} \got{1}{F} \gnl
\gcl{1} \gmp{g} \glmptb \gnot{\hspace{-0,34cm}\phi_X} \grmptb \gnl
\gcn{1}{1}{1}{2} \gmu \gcl{1} \gnl
\gvac{1} \hspace{-0,34cm} \glmptb \gnot{\hspace{-0,34cm}\lambda'} \grmptb \gcn{1}{2}{2}{5} \gnl
\gvac{1} \gcl{4} \glmpt \gnot{\hspace{-0,34cm}\tilde\beta^{-1}} \grmpb \gnl
\gvac{2} \gwcm{3} \gcl{1} \gnl
\gvac{2} \gelt{\hspace{0,14cm}f^{-1}} \gvac{1} \glmptb \gnot{\hspace{-0,34cm}\psi_X} \grmptb \gnl 
\gvac{4} \gcl{1} \gbmp{\beta} \gnl
\gvac{1} \gob{1}{F\s'} \gvac{2} \gob{1}{X} \gob{1}{F}
\gend=
\gbeg{3}{5}
\got{1}{F\s'} \got{1}{\crta X} \got{1}{F} \gnl
\gcl{1} \glmptb \gnot{\hspace{-0,34cm}\phi_{\crta X}} \grmptb \gnl
\glmptb \gnot{\hspace{-0,34cm}\lambda'} \grmptb \gcl{1} \gnl
\gcl{1} \glmptb \gnot{\hspace{-0,34cm}\psi_{\crta X}} \grmptb \gnl
\gob{1}{F\s'} \gob{1}{\crta X} \gob{1}{F}
\gend
$$
the equality $*$ holds if and only if the following holds true (compose the equality $*$ from above with $\tilde\beta\times\id_{XF}$ and from below with $\id_{F\s'X}\times \beta^{-1}$, 
and then multiply the obtained expression from the left in the convolution algebra of 2-cells $F\s'\to F\s'$ by $g^{-1}f$): 
$$
\gbeg{3}{5}
\got{1}{F\s'} \got{1}{X} \got{1}{F} \gnl
\gcl{1} \glmptb \gnot{\hspace{-0,34cm}\phi_X} \grmptb \gnl
\glmptb \gnot{\hspace{-0,34cm}\lambda'_\alpha} \grmptb \gcl{1} \gnl
\gcl{1} \glmptb \gnot{\hspace{-0,34cm}\psi_X} \grmptb \gnl
\gob{1}{F\s'} \gob{1}{X} \gob{1}{F}
\gend=
\gbeg{5}{9}
\got{2}{F\s'} \gvac{1} \got{1}{X} \got{1}{F} \gnl\gcmu \gvac{1} \gcl{1} \gcl{1} \gnl
\gmp{f} \gbmp{\tilde\beta}  \gmp{g} \glmptb \gnot{\hspace{-0,34cm}\phi_X} \grmptb \gnl
\gvac{1} \gcn{1}{1}{1}{2} \gmu \gcl{1} \gnl
\gvac{2} \hspace{-0,34cm} \glmptb \gnot{\hspace{-0,34cm}\lambda'} \grmptb \gcn{1}{2}{2}{5} \gnl
\gvac{1} \gelt{\hspace{0,14cm}g^{-1}}  \gcl{1} \glmpt \gnot{\hspace{-0,34cm}\tilde\beta^{-1}} \grmpb \gnl
\gvac{1} \gmu \gwcm{3} \gcl{1} \gnl
\gvac{1} \gcn{2}{1}{2}{2} \gelt{\hspace{0,14cm}f^{-1}} \gvac{1} \glmptb \gnot{\hspace{-0,34cm}\psi_X} \grmptb \gnl 
\gvac{1} \gob{2}{F\s'} \gvac{2} \gob{1}{X} \gob{1}{F}
\gend
$$
This is true because
$$
\gbeg{5}{9}
\got{2}{F\s'} \gvac{1} \got{1}{F} \gnl\gcmu \gvac{1} \gcl{2} \gnl
\gmp{f} \gbmp{\tilde\beta}  \gmp{g}  \gnl
\gvac{1} \gcn{1}{1}{1}{2} \gmu \gnl
\gvac{2} \hspace{-0,24cm} \glmptb \gnot{\hspace{-0,34cm}\lambda'} \grmptb \gnl
\gvac{1} \gelt{\hspace{0,14cm}g^{-1}}  \gcl{1} \glmpt \gnot{\hspace{-0,34cm}\tilde\beta^{-1}} \grmpb \gnl
\gvac{1} \gmu \gwcm{3} \gnl
\gvac{1} \gcn{2}{1}{2}{2} \gelt{\hspace{0,14cm}f^{-1}} \gvac{1} \gcl{1} \gnl 
\gvac{1} \gob{2}{F\s'} \gvac{2} \gob{1}{F}
\gend=
\gbeg{4}{7}
\got{2}{F\s'} \got{3}{F\s'} \gnl
\gcmu \gmp{g} \gcl{1} \gnl
\gmp{f} \gcn{1}{1}{1}{2} \gmu \gnl
\gvac{2} \hspace{-0,22cm} \glmptb \gnot{\hspace{-0,34cm}\lambda_{\tilde\beta}'} \grmptb \gnl
\gvac{1} \hspace{-0,24cm} \gelt{\hspace{0,14cm}g^{-1}} \gcn{1}{1}{2}{1} \gcmu \gnl
\gvac{1} \gmu \gelt{\hspace{0,14cm}f^{-1}} \gcl{1} \gnl
\gvac{1} \gob{1}{\hspace{0,32cm}F\s'} \gvac{2} \gob{1}{F\s'} \gnl
\gend\stackrel{\equref{lambda beta calculo}}{=}
\gbeg{4}{7}
\got{2}{F\s'} \got{1}{F\s'} \gnl
\gcmu  \gcl{2} \gnl
\gmp{f} \gcl{1} \gnl
\gvac{1} \glmptb \gnot{\lambda_{\tilde\beta\s*gf^{-1}}'} \gcmptb \grmp \gnl
\gelt{\hspace{0,14cm}g^{-1}} \gcl{1} \gcl{2} \gnl
\gmu \gnl
\gob{2}{\hspace{0,32cm}F\s'} \gob{1}{F\s'} \gnl
\gend\stackrel{\equref{lambda alfa calculo}}{=}\lambda_{g^{-1}f*\tilde\beta\s*gf^{-1}}'=\lambda_\alpha'
$$
It remains to see that given $(Y,\psi_Y,\phi_Y)\in\Bimnd(F, F\s')$ the object $\G\left((Y,\psi_Y,\phi_Y)\right)=(\dcr Y,\psi_{\dcr Y},\phi_{\dcr Y})$ 
satisfies condition \equref{twisted YD cond}. We find: 
$$
\gbeg{3}{5}
\got{1}{F\s'} \got{1}{\dcr Y} \got{1}{F} \gnl
\glmptb \gnot{\hspace{-0,34cm}\psi_{\dcr Y}} \grmptb \gcl{1} \gnl
\gcl{1} \glmptb \gnot{\hspace{-0,34cm}\lambda_\beta} \grmptb \gnl
\glmptb \gnot{\hspace{-0,34cm}\phi_{\dcr Y}} \grmptb \gcl{1} \gnl
\gob{1}{F\s'} \gob{1}{\dcr Y} \gob{1}{F}
\gend=
\gbeg{6}{9}
 \got{2}{F\s'} \got{1}{Y} \got{3}{F} \gnl
\gcmu \gcl{2} \gvac{1} \gcl{4} \gnl
\gmp{f} \gbmp{\tilde\beta} \gnl
\gvac{1} \glmptb \gnot{\hspace{-0,34cm}\psi_Y} \grmptb \gnl 
\gvac{1} \gcl{1} \glmpt \gnot{\hspace{-0,34cm}\beta^{-1}} \grmpb \gnl
\gvac{1} \gcn{2}{1}{1}{3} \glmptb \gnot{\hspace{-0,34cm}\lambda_\beta} \grmptb \gnl
\gelt{g^{-1}} \gvac{1} \glmptb \gnot{\hspace{-0,34cm}\phi_Y} \grmptb \gcl{2} \gnl
\gwmu{3} \gcl{1} \gnl
\gvac{1} \gob{1}{F\s'} \gvac{1} \gob{1}{Y} \gob{1}{F} 
\gend\stackrel{\equref{l6}}{=}
\gbeg{5}{9}
\got{2}{F\s'} \got{1}{Y} \got{1}{F} \gnl
\gcmu \gcl{2} \gcl{3} \gnl
\gmp{f} \gbmp{\tilde\beta} \gnl
\gvac{1} \glmptb \gnot{\hspace{-0,34cm}\psi_Y} \grmptb \gnl 
\gvac{1} \gcl{2} \glmptb \gnot{\hspace{-0,34cm}\lambda} \grmptb \gnl
\gvac{2} \gcl{1} \glmptb \gnot{\hspace{-0,34cm}\beta^{-1}} \grmp \gnl
\gelt{g^{-1}} \glmptb \gnot{\hspace{-0,34cm}\phi_Y} \grmptb \gcl{2} \gnl
\gmu \gcl{1} \gnl
\gob{2}{F\s'} \gob{1}{Y} \gob{1}{F} 
\gend\stackrel{YD}{=}
\gbeg{5}{8}
 \got{2}{F\s'} \got{1}{Y} \got{1}{F} \gnl
\gcmu \gcl{1} \gcl{1} \gnl
\gmp{f} \gbmp{\tilde\beta} \glmptb \gnot{\hspace{-0,34cm}\phi_Y} \grmptb \gnl
\gvac{1} \glmptb \gnot{\hspace{-0,34cm}\lambda'} \grmptb \gcl{1} \gnl 
\gvac{1} \gcl{2} \glmptb \gnot{\hspace{-0,34cm}\psi_Y} \grmptb \gnl
\gelt{g^{-1}} \gvac{1} \gcl{1} \glmptb \gnot{\hspace{-0,34cm}\beta^{-1}} \grmp \gnl
\gmu \gcl{1} \gcl{1} \gnl
\gob{2}{F\s'} \gob{1}{Y} \gob{1}{F} 
\gend\stackrel{\equref{l6}}{=}
\gbeg{5}{7}
 \got{2}{F\s'} \got{1}{Y} \got{1}{F} \gnl
\gcmu \glmptb \gnot{\hspace{-0,34cm}\phi_Y} \grmptb \gnl
\gmp{f} \glmptb \gnot{\hspace{-0,34cm}\lambda'_{\tilde\beta}} \grmptb \gcl{2} \gnl 
\gvac{1} \gcl{2} \gbmp{\tilde\beta} \gnl 
\gelt{g^{-1}} \gvac{1} \glmptb \gnot{\hspace{-0,34cm}\psi_Y} \grmptb \gnl
\gmu \gcl{1} \glmptb \gnot{\hspace{-0,34cm}\beta^{-1}} \grmp \gnl
\gob{2}{F\s'} \gob{1}{Y} \gob{1}{F} 
\gend
$$

$$
\stackrel{\equref{lambda alfa calculo}}{=}
\gbeg{5}{9}
\got{1}{F\s'} \gvac{1} \got{2}{Y} \got{0}{F} \gnl
\gcl{2}  \gvac{1} \hspace{-0,34cm} \gelt{\hspace{0,14cm}g^{-1}} \glmptb \gnot{\hspace{-0,34cm}\phi_Y} \grmptb \gnl
\gvac{2} \gmu \gcl{4} \gnl
\gvac{1} \hspace{-0,3cm} \glmptb \gnot{\lambda_{\alpha}'} \gcmp \grmptb \gnl
\gvac{1} \gcl{4} \gvac{1} \hspace{-0,34cm} \gcmu \gnl
\gvac{3} \gmp{f} \gbmp{\tilde\beta} \gnl
\gvac{4} \glmptb \gnot{\hspace{-0,34cm}\psi_Y} \grmptb \gnl
\gvac{4} \gcl{1} \glmptb \gnot{\hspace{-0,34cm}\beta^{-1}} \grmp \gnl
\gvac{1} \gob{1}{\hspace{0,32cm}F\s'} \gvac{2} \gob{1}{Y} \gob{1}{F} \gnl
\gend=
\gbeg{3}{5}
\gvac{1} \got{1}{F\s'} \got{1}{\dcr Y} \got{1}{F} \gnl
\gvac{1} \gcl{1} \glmptb \gnot{\hspace{-0,34cm}\phi_{\dcr Y}} \grmptb \gnl
\gvac{1} \glmptb \gnot{\hspace{-0,34cm}\lambda'} \grmptb \gcl{1} \gnl
\gvac{1} \gcl{1} \glmptb \gnot{\hspace{-0,34cm}\psi_{\dcr Y}} \grmptb \gnl
\gvac{1} \gob{1}{F\s'} \gob{1}{\dcr Y} \gob{1}{F}
\gend
$$
\qed\end{proof}

In view of \leref{obv pair} we clearly have:

\begin{cor}
For every bimonad $F$ in $\K$ and every $\alpha\in G$ there is an isomorphism of categories $\Bimnd^T_{(\alpha,\alpha)}(F, F)\iso\Bimnd(F, F).$
\end{cor}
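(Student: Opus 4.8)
The plan is to obtain this as an immediate specialization of the preceding Theorem. First I would take the two bimonads appearing there to coincide, $F\s'=F$, so that the two $0$-automorphism groups are the same, $G'=G$, and I would choose the identity fusion map $j=\id_G:G\to G$. With this choice the conjugated element $\tilde\beta:=j\beta j^{-1}$ appearing in the Theorem collapses to $\tilde\beta=\beta$ for every $\beta\in G$, so no genuine conjugation occurs and the bookkeeping simplifies.

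Next I would supply the pair in involution through \leref{obv pair}. That Lemma asserts that, for the given bimonad $F$, the pair $(\Epsilon,\eta)$ formed by the counit and unit $2$-cells is a pair in involution corresponding to $(\alpha,\alpha)$ for every $\alpha\in G$. The hypotheses of the definition of a pair in involution are met for free: $\Epsilon:F\to\Id_\A$ is a $1$-cell in $\Mnd(\K)$ and $\eta:\Id_\A\to F$ is a $1$-cell in $\Comnd(\K)$, and each is in fact the unit of its respective convolution algebra of $2$-cells (the convolution unit of $F\to\Id_\A$ is $\eta_{\Id_\A}\comp\Epsilon=\Epsilon$, and dually for $\Id_\A\to F$), so both are convolution invertible, being their own inverses.

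Setting $\beta=\alpha$ (whence $\tilde\beta=\alpha$) and taking $(f,g)=(\Epsilon,\eta)$, all the hypotheses of the Theorem are satisfied; in particular the standing assumption that the $\lambda_\alpha$ are of the form \equref{lambda_alfa} is exactly the one already in force throughout this Subsection. The Theorem then yields an isomorphism of categories $\Bimnd^T(F_\alpha,F_\alpha)\iso\Bimnd(F,F)$. Finally I would rewrite the left-hand side in the component notation of \equref{hom cat bim Tur}, recalling that $\Bimnd^T_{(\gamma,\delta)}(F,F\s')=\Bimnd^T(F_\delta,F\s'_\gamma)$, so that $\Bimnd^T(F_\alpha,F_\alpha)=\Bimnd^T_{(\alpha,\alpha)}(F,F)$, which is precisely the asserted isomorphism.

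There is essentially no hard step: the whole content is packaged into the Theorem and into \leref{obv pair}, and the only thing to verify is the consistency of the specialized data, namely that with $F\s'=F$ and $j=\id_G$ the input $(\alpha,\beta)=(\alpha,\alpha)$ with $\tilde\beta=\alpha$ is self-consistent, and that the explicit functors $\F$ and $\G$ built in the proof of the Theorem specialize without introducing new conditions (here $f^{-1}=\Epsilon$ and $g^{-1}=\eta$, so the twists collapse accordingly). Thus the corollary follows by direct invocation of the two previous results.
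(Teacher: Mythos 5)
Your proposal is correct and follows exactly the route the paper intends: the Corollary is stated ``in view of'' \leref{obv pair}, i.e.\ one takes $F\s'=F$, $\beta=\alpha$, $(f,g)=(\Epsilon,\eta)$ and specializes the preceding Theorem, just as you do. Your additional remarks on the identity fusion map and the convolution invertibility of $\Epsilon$ and $\eta$ are sound consistency checks but add nothing beyond the paper's one-line argument.
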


The proof of the following result is direct.

\begin{prop}
Let $(\A,F), (\A',F\s')$ be bimonads in $\K$ with isomorphic 0-automorphism groups $G$ and $G'$, respectively. Let $X$ be a 1-cell in $\K$ so that there is a 1-cell 
$(X, \tau_{F,X}):(\A,F)\to (\A',F\s')$ in $\Mnd(\K)$ and a 1-cell $(X, \tau_{X,F}):(\A,F)\to (\A',F\s')$ in $\Comnd(\K)$, so that $\tau_{X,F}\comp\tau_{F,X}=if_{FX}$ 
and 
\begin{equation} \eqlabel{YBE}
\gbeg{3}{5}
\got{1}{F'} \got{1}{X} \got{1}{F} \gnl
\gcl{1} \glmptb \gnot{\hspace{-0,34cm}\tau_{X,F}} \grmptb \gnl
\glmptb \gnot{\hspace{-0,34cm}\tau_{F',F'}} \grmptb \gcl{1} \gnl
\gcl{1} \glmptb \gnot{\hspace{-0,34cm}\tau_{F,X}} \grmptb \gnl
\gob{1}{F'} \gob{1}{X} \gob{1}{F}
\gend=
\gbeg{3}{5}
\got{1}{F'} \got{1}{X} \got{1}{F} \gnl
\glmptb \gnot{\hspace{-0,34cm}\tau_{F,X}} \grmptb \gcl{1} \gnl
\gcl{1} \glmptb \gnot{\hspace{-0,34cm}\tau_{F,F}} \grmptb \gnl
\glmptb \gnot{\hspace{-0,34cm}\tau_{X,F}} \grmptb \gcl{1} \gnl
\gob{1}{F'} \gob{1}{X} \gob{1}{F}
\gend
\end{equation} 
hold. Take $\alpha\in G', \beta\in G$ and suppose there is a pair in involution $(f,g)$ corresponding to $(\alpha, \beta)$ so that the following conditions hold: 
\begin{center}
\begin{tabular} {p{4.5cm}p{0cm}p{4.3cm}p{0cm}p{4.3cm}} 
\begin{equation} \eqlabel{nat beta}
\gbeg{3}{4}
\got{1}{F'} \got{1}{X} \gnl
\gbmp{\tilde\beta} \gcl{1} \gnl
\glmptb \gnot{\hspace{-0,34cm}\tau_{F,X}} \grmptb \gnl
\gob{1}{X} \gob{1}{F}
\gend=
\gbeg{3}{4}
\got{1}{F'} \got{1}{X} \gnl
\glmptb \gnot{\hspace{-0,34cm}\tau_{F,X}} \grmptb \gnl
\gcl{1} \gbmp{\tilde\beta} \gnl
\gob{1}{X} \gob{1}{F}
\gend
\end{equation} & & \vspace{-0,6cm}
\begin{equation} \eqlabel{nat g inv}
\gbeg{3}{5}
\got{1}{F'} \gnl
\gcl{1} \gelt{\hspace{0,14cm}g^{-1}} \gnl
\gcl{1} \gcl{1} \gnl
\glmptb \gnot{\hspace{-0,34cm}\tau_{F\s',F\s'}} \grmptb \gnl
\gob{1}{F\s'} \gob{1}{F\s'}
\gend=
\gbeg{3}{5}
\got{3}{F'} \gnl
\gvac{1} \gcl{3} \gnl
\gelt{\hspace{0,14cm}g^{-1}} \gnl
\gcl{1} \gnl
\gob{1}{F\s'} \gob{1}{F\s'}
\gend
\end{equation} & & 
\begin{equation} \eqlabel{nat f}
\gbeg{3}{4}
\got{1}{F'} \got{1}{F'} \gnl
\glmptb \gnot{\hspace{-0,34cm}\tau_{F\s',F\s'}} \grmptb \gnl
\gcl{1} \gmp{f} \gnl
\gob{1}{F\s'} 
\gend=
\gbeg{3}{4}
\got{1}{F'} \got{1}{F'} \gnl
\gmp{f} \gcl{2} \gnl
\gob{3}{F\s'} 
\gend
\end{equation} 
\end{tabular} 
\end{center}
Then $(X, \psi, \phi)\in\Bimnd^T(F_\beta, F\s'_\alpha)$ where $\psi$ and $\phi$ are given by 
$$
\psi=
\gbeg{4}{4}
\got{2}{F\s'} \got{1}{X} \gnl
\gcmu \gcl{1} \gnl
\gmp{f} \glmptb \gnot{\hspace{-0,34cm}\tau_{F,X}} \grmptb \gnl
\gvac{1} \gob{1}{X} \gob{1}{F} 
\gend \hspace{2,5cm}
\phi=
\gbeg{4}{4}
\gvac{1} \got{1}{X} \got{1}{F} \gnl
\gelt{g^{-1}} \glmptb \gnot{\hspace{-0,34cm}\tau_{X,F}} \grmptb \gnl
\gmu \gcl{1} \gnl
\gob{2}{F\s'} \gob{1}{X.}
\gend
$$
\end{prop}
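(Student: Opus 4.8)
The plan is to verify the three defining conditions of an object of $\Bimnd^T(F_\beta, F'_\alpha)$ for the triple $(X,\psi,\phi)$: that $(X,\psi)$ is a $1$-cell in $\Mnd(\K)$, that $(X,\phi)$ is a $1$-cell in $\Comnd(\K)$, and that the twisted Yetter--Drinfel`d compatibility \equref{twisted YD cond} holds with $\lambda_\beta$ and $\lambda'_\alpha$. The conceptual reason the statement is true is that $(X,\tau_{F,X},\tau_{X,F})$ is a strong Yetter--Drinfel`d module, i.e. an object of $\Bimnd(F,F')$ (its strong YD condition being supplied by the hexagon \equref{YBE}), and that $(X,\psi,\phi)$ is exactly the image of this module under the functor $\G$ constructed in the preceding theorem. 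So, modulo matching the formulas, the claim would follow from that theorem; but since the cancellations identifying the two are precisely the content of the naturality hypotheses, I would in any case carry the verification out directly.

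First I would treat the two $1$-cell conditions. Since $\psi$ is assembled from the comultiplication $\Delta_{F'}$, the algebra map $f$ (satisfying \equref{alg m}) and the monad $1$-cell $\tau_{F,X}$, checking the two identities of \equref{monadic d.l.} is a routine diagrammatic computation using coassociativity and counitality of the comonad $F'$, the bimonad compatibility \equref{bimonad} linking $\mu_{F'}$ with $\Delta_{F'}$, and the $\Mnd(\K)$-axioms of $\tau_{F,X}$; this is entirely parallel to \leref{twisted psi's}. Dually, $(X,\phi)$ satisfies \equref{comonadic d.l.} because $\phi$ is built from $\mu_{F'}$, the coalgebra map $g$ (satisfying \equref{coalg m}) and the comonad $1$-cell $\tau_{X,F}$. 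At this stage I would also record the untwisted strong YD condition for $(X,\tau_{F,X},\tau_{X,F})$: expanding $\lambda_e,\lambda'_e$ via \equref{lambda_alfa}, sliding the (co)multiplications of $F$ and $F'$ through $\tau_{F,X},\tau_{X,F}$ by their $1$-cell axioms, applying \equref{YBE} to the $\tau_{F,F},\tau_{F',F'}$ cores, and using $\tau_{X,F}\tau_{F,X}=\id$ to annihilate the resulting $\tau$-pair.

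The heart of the proof is \equref{twisted YD cond}. Starting from its left-hand side I would substitute the definitions of $\psi$, $\phi$ and of $\lambda_\beta$ (form \equref{lambda_alfa}), then: move the automorphism $\beta$ across $\tau_{F,X}$ via \equref{nat beta} (converting $\tilde\beta$ on the $F'$-leg into $\beta$ on the $F$-leg); transport $f$ and $g^{-1}$ past the inner braiding $\tau_{F',F'}$ of $\lambda'_\alpha$ using \equref{nat f} and \equref{nat g inv}; reduce the braidings with \equref{YBE} and cancel the emerging $\tau_{X,F}\tau_{F,X}$; and finally rewrite $\lambda_\beta$ as $\lambda'_\alpha$ through the convolution identities \equref{lambda beta calculo}--\equref{lambda alfa calculo} together with the pair-in-involution relation $\alpha=g^{-1}f*\tilde\beta*gf^{-1}$. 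This reproduces the right-hand side of \equref{twisted YD cond}, giving the desired $\lambda'_\alpha$-compatibility and placing the object in the component $\Bimnd^T(F_\beta, F'_\alpha)$.

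The main obstacle is precisely this last computation: the bookkeeping of the insertions of $f$, $g$ and $\beta$ relative to the three braidings $\tau_{F,X},\tau_{X,F},\tau_{F',F'}$, and the step that turns $\lambda_\beta$ into $\lambda'_\alpha$. This mirrors the critical step ``$*$'' in the proof of the preceding theorem, where an equality of two-sided $\lambda$-expressions holds exactly because $(f,g)$ is a pair in involution; here the naturality conditions \equref{nat beta}, \equref{nat f}, \equref{nat g inv} are what render the braidings $\tau$ transparent to $f$, $g$ and $\beta$, so that \equref{lambda beta calculo}--\equref{lambda alfa calculo} become applicable. Everything else is formal.
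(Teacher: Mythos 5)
The paper offers no proof of this Proposition beyond the sentence ``The proof of the following result is direct,'' so there is no argument of the author's to compare yours against; I can only judge your plan on its own terms. Your overall strategy is surely the intended one, and your key observation is correct and clarifying: using \equref{nat beta} to cancel the $\tilde\beta$/$\beta^{-1}$ decorations and the convolution identities $f^{-1}*f=\Epsilon$, $g*g^{-1}=\eta$, the triple $(X,\psi,\phi)$ is exactly $\G(X,\tau_{F,X},\tau_{X,F})$ for the functor $\G$ of the preceding Theorem, so (granting that Theorem) the Proposition is equivalent to the single assertion that $(X,\tau_{F,X},\tau_{X,F})$ satisfies the untwisted compatibility \equref{psi-lambda-phi 2-bimonad}, i.e.\ is an object of $\Bimnd(F,F\s')$. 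You also correctly identify where each hypothesis enters. (Two small omissions: \equref{nat f} and \equref{nat g inv} are already needed for your ``routine'' verification of \equref{monadic d.l.} and \equref{comonadic d.l.}, because the bimonad axiom \equref{bimonad} inserts a $\lambda'$, of the form \equref{lambda_alfa}, between the two copies of $f$, resp.\ of $g^{-1}$.)

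The genuine gap is in the step you dispose of in one clause: proving the untwisted strong YD condition by ``sliding the (co)multiplications through $\tau_{F,X},\tau_{X,F}$ by their $1$-cell axioms, applying \equref{YBE}, and cancelling $\tau_{X,F}\comp\tau_{F,X}$.'' The axioms \equref{monadic d.l.} and \equref{comonadic d.l.} only control $\tau_{F,X}$ against multiplications and $\tau_{X,F}$ against comultiplications, whereas $\lambda_e=(\mu\times F)\comp(F\times\tau_{F,F})\comp(\Delta\times F)$ places a $\Delta_F$ immediately \emph{after} the output of $\tau_{F,X}$ and feeds a $\mu_F$ directly \emph{into} $\tau_{X,F}$ --- configurations covered by no axiom. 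The one-sided identity $\tau_{X,F}\comp\tau_{F,X}=\id$ only yields, e.g., $(F\s'\tau_{X,F})\comp(\tau_{X,F}F)\comp(X\Delta)\comp\tau_{F,X}=\Delta'\times X$ and $\tau_{X,F}\comp(X\mu)\comp(\tau_{F,X}F)\comp(F\s'\tau_{F,X})=\mu'\times X$; to extract from these the identities your computation actually consumes (such as $(X\Delta)\comp\tau_{F,X}=(\tau_{F,X}F)\comp(F\s'\tau_{F,X})\comp(\Delta'X)$) one must left-cancel $\tau_{X,F}$, i.e.\ one needs the other composite $\tau_{F,X}\comp\tau_{X,F}$ to be the identity as well. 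So either you should read the (garbled) hypothesis ``$\tau_{X,F}\comp\tau_{F,X}=if_{FX}$'' as asserting that the two $\tau$'s are mutually inverse --- true in the motivating flip/vector-space example --- and say so, or you must exhibit how the insertions of $f$, $g^{-1}$ and the pair-in-involution relation reroute the computation around these two junctions. As written, the only non-formal step of the proof is precisely the one your sketch does not carry out.
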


The above Theorem, Corollary and Proposition generalize to the 2-categorical setting Theorem 4.1, Corollary 4.2 and Example 2.7, respectively, from \cite{PS}.


\end{document}